\theoremstyle{definition}
\newtheorem{thm}{Theorem}[subsection]
\newtheorem{prop}[thm]{Proposition}
\newtheorem{cor}[thm]{Corollary}
\newtheorem{lem}[thm]{Lemma}
\numberwithin{equation}{subsection}
\def\bs#1{\boldsymbol{#1}}
\def\bbs#1{\boldsymbol{\bar #1}}
\def\lie#1{\mathfrak{#1}}
\def\tlie#1{\tilde{\mathfrak{#1}}}
\def\endd{\hfill$\diamond$}
\DeclareMathOperator\wt{{\rm wt}}
\begin{document}

	\title[Totally ordered graphs and prime factorization]{Totally ordered pseudo $q$-factorization graphs\\ and prime factorization}
	
	\author{Matheus Brito}
	\address{Departamento de Matemática, Universidade Federal do Paraná, Curitiba - PR - Brazil, 81530-015}
	\email{mbrito@ufpr.br, cristiano.clayton@ufpr.br}
	\thanks{}
	\author{Adriano Moura}
	\address{Departamento de Matemática, Universidade Estadual de Campinas, Campinas - SP - Brazil, 13083-859.}	\email{aamoura@unicamp.br}
	\thanks{The work of M.B. and A.M. was partially supported CNPq grant 405793/2023-5. The work of A.M. was also partially supported by Fapesp grant 2018/23690-6. }
	\author{Clayton Silva}
	\thanks{C. Silva is grateful to CNPq for financial support (grant 100161/2024-3 INCTMat-IMPA) during his postdoctoral research internship at UFPR}
	
	\begin{abstract}
		In an earlier publication, the last two authors showed that a finite-dimensional module for a quantum affine algebra of type $A$ whose $q$-factorization graph is totally ordered is prime. In this paper, we continue the investigation of the role of totally ordered pseudo $q$-factorization graphs in the study of the monoidal structure of the underlying abelian category. We introduce the notions of modules with (prime) snake support and of maximal totally ordered subgraphs decompositions. Our main result shows that modules with snake support have unique such decomposition and that it determines the corresponding prime factorization. Along the way, we also prove that prime snake modules (for type $A$) can be characterized as the modules for which every pseudo $q$-factorization graph is totally ordered.
	\end{abstract}
	
	\maketitle

	\section{Introduction}
	
	A major problem in the realm of finite-dimensional representations of quantum affine algebras is that of describing the factorizations of a given simple module as a tensor product of prime ones. Certainly, the most successful theoretical approach so far is the connection with cluster algebras. Still, given a specific example, it is not such a simple task to find the appropriate answer using this approach. Also, it might be that, for certain classes of modules, other approaches may lead to more efficient algorithms for finding such factorizations. This is the spirit of our recent publications, which is further developed here.
	
	The concept of pseudo $q$-factorization graphs was recently introduced in \cite{ms:to} as a combinatorial language which is suited for capturing certain properties of Drinfeld polynomials. Among all pseudo $q$-factorization graphs afforded by a given Drinfeld polynomial, there is a maximal (the fundamental factorization graph) and a minimal (the $q$-factorization graph). Using certain known representation theoretic facts about tensor products of Kirillov Reshetikhin modules and qcharacters, combined with special topological/combinatorial properties of the underlying $q$-factorization graphs, it was shown in \cite{ms:to} that, for algebras of type $A$, modules associated to totally ordered $q$-factorization graphs are prime. This provided a first glimpse into the role of totally ordered pseudo $q$-factorization graphs in the study of prime factorizations. We have also used certain totally ordered pseudo $q$-factorization graphs to construct strongly real modules in \cite{bms}.
	
	A connected pseudo $q$-factorization graph with at most two vertices is totally  ordered and the underlying module is certainly prime. The prime factorization of a module whose $q$-factorization graph has 3 vertices was described in \cite{ms:3tree} and we recall it in  \Cref{t:3lineprime} below. In particular, there is a complete characterization of prime modules whose $q$-factorization graphs have 3 vertices, but are not totally ordered. The guiding line of the present paper is the following question: Can we describe classes of modules for which the prime factorizations can be found from the (purely combinatorial) study of chains of maximal totally ordered subgraphs (mtos for short) of a given pseudo $q$-factorization graph? Our main result, \Cref{t:orddecss}, describes such a family: that of modules with snake support. We also show that \Cref{t:orddecss} includes, as a particular case, the main result of \cite{bc:hokr} on the prime factorization of modules supported on a single node of the underlying finite type Dynking diagram. In other words, the study of mtos describes the prime factorizations of modules with snake support and, in particular, of any module afforded by monochromatic pseudo $q$-factorization graphs.

	The concepts of snakes and prime snakes were introduced in \cite{my:pathB} for algebras of types $A$ and $B$. We review the formal definition for type $A$ in \Cref{ss:snakemods}. It is immediate that the original definition of prime snakes is equivalent to requesting that the associated fundamental factorization graph is totally ordered. We provide another characterization in \Cref{t:ps<=>toG}: a simple module is a prime snake module if and only if all of its pseudo $q$-factorization graphs are totally ordered. This characterization could serve as a type independent conceptual definition of prime snakes.  We give a few first steps on the discussion about switching between choices of pseudo $q$-factorization graphs in \Cref{ss:fuse}. We keep finding examples where intermediate pseudo $q$-factorization graphs are more suited for answering different questions than the two extremal graphs. It is not clear to us how to answer the question: what is the most convenient graph to work with? We leave further studies in this direction for the future. The initial discussion we make here provides a proof for  \Cref{t:ps<=>toG}. 
	
	Let us explain the statement of \Cref{t:orddecss}. Given a Drinfeld polynomial $\bs\pi$,  let $\bbs\pi$ be the Drinfeld polynomial having exactly one copy of each fundamental factor of $\bs\pi$. We say $\bs\pi$ has snake support if $\bbs\pi$ arises from a prime snake. If that is the case, \Cref{t:orddecss} says the fundamental factorization graph $G_f(\bs\pi)$ of $\bs\pi$ admits a unique (up to isomorphism) mtos-quochain and, moreover, if $G_1,\dots,G_l$ is the multicut of $G_f(\bs\pi)$ associated with such a quochain, then
	\begin{equation*}
		V(\bs\pi) \cong V(\bs\pi_1)\otimes\cdots\otimes V(\bs\pi_l)
	\end{equation*}
	is the unique prime factorization of the simple module $V(\bs\pi)$, where $\bs\pi_j$ is the Drinfeld polynomial associated to $G_j$. The notion of quochains was introduced in \cite{bms} and reviewed in \Cref{ss:graphmorp} below, where we also introduce the notion of isomorphic quochains. In summary, the aforementioned multicut is obtained as follows: $G_1$ is an mtos of $G_f(\bs\pi)$, $G_2$ is an mtos in the graph obtained from $G_f(\bs\pi)$ by deleting the vertices and arrows related to $G_1$, and so on. For any such choice of multicut, the corresponding factors $V(\bs\pi_j)$ are isomorphic.
	
	The basic background and notation for the main statements are collected in Sections \ref{ss:basenot} to \ref{ss:graphmorp}. The main results are stated in Sections \ref{ss:snakemods} and \ref{ss:sspf}. As mentioned above, \Cref{t:orddecss} can be seen as a generalization of the main result of \cite{bc:hokr}. Since the terminology used in \cite{bc:hokr} is different than the one used here, we also provide a more explicit comparison in  \Cref{ss:sspf}. In \Cref{ss:evmcg}, we make some comments about modules for which the prime factorizations do not arise from mtos-quochains. Further background needed for the proofs and the main proofs are given in \Cref{s:pfs}.

	\section{Basic Notation and The Main Statements}\label{s:baseback}
	
	Throughout the paper, let  $\mathbb Z$ denote the set integers. Let also $\mathbb Z_{\ge m} ,\mathbb Z_{< m}$, etc.,  denote the obvious subsets of $\mathbb Z$. Given a ring $\mathbb A$, the underlying multiplicative group of units is denoted by $\mathbb A^\times$. 
	The symbol $\cong$ means ``isomorphic to''. We shall use the symbol $\diamond$ to mark the end of remarks, examples, and statements of results whose proofs are postponed. The symbol \qedsymbol\ will mark the end of proofs as well as of statements whose proofs are omitted. 
	
	\subsection{Quantum Algebras and Their Finite-Dimensional Modules}\label{ss:basenot}
	Although we use the basic notation as in \cite{ms:to,ms:3tree,bms}, for the reader's convenience, we review it here.  
	
	Let $\lie g$ be a simple Lie algebra of type $A_n$ over $\mathbb C$ and let $I$ be the set of nodes of its Dynkin diagram. We let $x_i^\pm, h_i, i\in I$, denote generators as in Serre's Theorem and let $\lie g=\lie n^-\oplus\lie h\oplus \lie n^+$ be the corresponding triangular decomposition. The symbols $R, R^+, Q,Q^+,P,P^+$ will stand for, respectively, the sets of roots, positive roots, root lattice, the monoid generated by the positive roots, the weight lattice, and the set of integral dominant weights. The fundamental weights and simple roots will be denoted by $\omega_i, \alpha_i,i\in I$.   For $i\in I$, let $i^*= w_0(i)$, where $w_0$ is the Dynkin diagram automorphism induced by the longest element of the Weyl group.   For $i,j\in I$, we let $[i,j]$ denote the connected subgraph having $i,j$ as boundary nodes, while the set of boundary nodes of $J\subseteq I$ is denoted by $\partial J$. We also let $d(i,j)=\#[i,j]-1$ and $d(J,K)=\min\{d(j,k): j\in J, k\in K\}$ for $J,K\subseteq I$.
	
	We let $U_q(\tlie g)$  be the quantum affine (in fact loop) algebra over an algebraically closed field of characteristic zero $\mathbb F$, where $q\in\mathbb F^\times$ is not a root of unity.  The generators are denoted by $x_{i,r}^\pm, k_i^{\pm 1}, h_{i,s}, i\in I, r,s\in\mathbb Z, s\ne 0$. The subalgebra generated by $x_i:=x_{i,0}^\pm, k_i^{\pm 1}, i\in I$ is a Hopf subalgebra of $U_q(\tlie g)$ isomorphic to the Drinfeld-Jimbo quantum group $U_q(\lie g)$.

	For  $i\in I$, $a\in \mathbb Z$, we let $\bs\omega_{i,a}$ denote the corresponding fundamental $\ell$-weight, which is the Drinfeld polynomial whose unique non-constant entry is equal to $1-q^au\in\mathbb F[u]$. 
	We let $\mathcal P^+$ denote the multiplicative monoid generated by such elements, with identity element denoted by $\bs 1$, while $\mathcal P$ denotes the corresponding abelian group. We shall say $\bs\omega_{i,a}$ occurs in $\bs\pi\in\mathcal P^+$ if it appears in a reduced expression for $\bs\pi$ as a product of fundamental $\ell$-weights and set
	\begin{equation*}
		\sup(\bs\pi) = \{i\in I: \bs\omega_{i,a} \text{ occurs in } \bs\pi \text{ for some } a\in\mathbb Z\}.
	\end{equation*}
	Let $\mathcal P_i^+ = \{\bs\pi\in\mathcal P^+:\sup(\bs\pi)=\{i\}\}$.

	Let $\mathcal C$ be the full subcategory of that of fintie-dimensional $U_q(\tlie g)$-modules whose simple factors have highest $\ell$-weights in $\mathcal P^+$ and, hence, $\ell$-weights in $\mathcal P$. Thus, a finite-dimensional $U_q(\tlie g)$-module $V$ is in $\mathcal C$ iff
	$$V=\bigoplus_{\bs\varpi\in\mathcal P}^{} V_{\bs\varpi}$$
	where $V_{\bs\varpi}$ is the $\ell$-weight space of $V$ associated to $\bs\varpi\in\mathcal P$. Set
	\begin{equation*}
		\wt_\ell(V) = \{\bs\varpi\in\mathcal P: V_{\bs\varpi}\ne 0\}. 
	\end{equation*}

	For $\bs\pi\in\mathcal P^+$, $V(\bs\pi)$ will denote a simple $U_q(\tlie g)$-module whose highest $\ell$-weight is $\bs\pi$. Since $\mathcal C$ is a monoidal category, the notion of prime objects is defined. Moreover, since $V(\bs 1)$, the one-dimensional trivial representation, is the unique invertible object in $\mathcal C$, it follows that $V\in\mathcal C$ is prime iff 
	\begin{equation*}
		V\cong V_1\otimes V_2 \quad\Rightarrow\quad V_j\cong V(\bs 1) \quad\text{for some}\quad j\in\{1,2\}.
	\end{equation*}
	Every simple object in $\mathcal C$ admits a decomposition as a tensor product of simple prime modules and a simple object $V$ is said to be real if $V\otimes V$ is simple.

	For an object $V\in\mathcal C$, let $V^*$ and ${}^*V$ be the dual modules to $V$ such that the usual evaluation maps
	\begin{equation*}
		V^*\otimes V \to \mathbb F  \quad\text{and}\quad V\otimes {}^*V\to\mathbb F
	\end{equation*}
	are module homomorphisms (cf. \cite[Section 2.6]{ms:to}). Then $({}^*V)^*\cong V\cong {}^*(V^*)$ and  $(V_1\otimes V_2)^*\cong V_2^*\otimes V_1^*$. 
	The Hopf algebra structure on $U_q(\tlie g)$ is chosen so that, if $V=V(\bs\pi)$, then $V^*\cong V(\bs\pi^*)$, where $\bs\pi\mapsto \bs\pi^*$ is the group automorphism of $\mathcal P$ determined by
	\begin{equation*}
		\bs\omega_{i,a}^* = \bs\omega_{i^*,a-\check h}. 
	\end{equation*}
	Similarly, considering the automorphism determined by ${}^*\bs\omega_{i,a} = \bs\omega_{i^*,a+\check h}$, it follows that ${}^*V(\bs\pi)\cong V({}^*\bs\pi)$. 
	
	Given $i\in I, a\in\mathbb Z, r\in\mathbb Z_{\ge 0}$,  define 
	\begin{equation*}
		\bs\omega_{i,a,r} = \prod_{p=0}^{r-1} \bs\omega_{i,a+{r-1-2p}}.
	\end{equation*}
	These are the Drinfeld polynomials of the Kirillov-Reshetikhin modules and, hence, we refer to them as Drinfeld polynomials of KR.  The set of all such polynomials will be denoted by $\mathcal{KR}$. 
	Every $\bs\pi\in\mathcal P^+$ can be written uniquely as a product of KR type polynomials such that, for every two factors supported at $i$, say $\bs\omega_{i,a,r}$ and $\bs\omega_{i,b,s}$, the following holds
	\begin{equation}\label{e:defqfact}
		a-b \notin\mathscr R_i^{r,s} :=\{r+s-2p: 0\le p<\min\{r,s\}\}. 
	\end{equation}
	Such factorization is said to be the  $q$-factorization of $\bs\pi$ and the corresponding factors are called the $q$-factors of $\bs\pi$.
	By abuse of language, whenever we mention the set of $q$-factors of $\bs\pi$ we actually mean the associated multiset of $q$-factors counted with multiplicities in the $q$-factorization.  It is often convenient to work with factorizations in KR type polynomials which not necessarily satisfy \eqref{e:defqfact}. Such a factorization will be referred to as a pseudo $q$-factorization and the associated factors as the corresponding $q$-factors of the factorization.
	
	Given $(i,r),(j,s)\in I\times  \mathbb Z_{>0}$ and $a,b\in\mathbb Z$, set
	\begin{equation}\label{e:defredset'}
		\mathscr R_{i,j}^{r,s} = \{r+s+d(i,j)-2p: - d([i,j],\partial I)\le p<\min\{r,s\}  \}. 
	\end{equation}
	Note
	\begin{equation}\label{e:sl2inRij}
		\mathscr R_{i}^{r,s}\subseteq \mathscr R_{i,i}^{r,s}.
	\end{equation} 
	It is well-known that
	\begin{equation}\label{e:defredset}
		V(\bs\omega_{i,a,r})\otimes V(\bs\omega_{j,b,s}) \text{ is reducible}\qquad\Leftrightarrow\qquad |a-b| \in \mathscr R_{i,j}^{r,s}.
	\end{equation}
	Moreover, in that case,
	\begin{equation}\label{e:krhwtp}
		V(\bs\omega_{i,a,r})\otimes V(\bs\omega_{j,b,s}) \text{ is  highest-$\ell$-weight}\qquad\Leftrightarrow\qquad a>b.
	\end{equation}
	If $r=s=1$, we simplify notation and write $\mathscr R_{i,j}$ for $\mathscr R_{i,j}^{1,1}$.

	\subsection{Pseudo q-Factorization Graphs}\label{ss:qfgraphs}
	The notion of a pseudo $q$-factorization graph was introduced in \cite{ms:to}. We now review the rephrased definition given in \cite{bms}.
	
	Let $G=(\mathcal V,\mathcal A)$ be a digraph with vertex set $\mathcal V$ and arrow set $\mathcal A\subseteq \mathcal V\times\mathcal V$. If $a=(v,w)\in\mathcal A$, we set $h(a)=w$ and $t(a)=v$ (the head and the tail of $a$). A pseudo q-factorization map over $G$ is a map $\mathcal F:\mathcal V\to\mathcal{KR}$ such that
	\begin{equation}
		\mathcal F(v) = \bs\omega_{i,a,r} \ \ \text{and}\ \ \mathcal F(w) = \bs\omega_{j,b,s} \quad\Rightarrow\quad \Big[ (v,w)\in\mathcal A \quad\Leftrightarrow\quad a-b \in\mathscr R_{i,j}^{r,s} \Big].
	\end{equation}
	In particular, if such a map exists, $G$ does not contain loops nor oriented cycles. A pseudo q-factorization graph is a digraph equipped with a pseudo q-factorization map. We shall say that a pseudo $q$-factorization map $\mathcal F$ over $G$ is fundamental if, for all $v\in\mathcal V$, $\mathcal F(v)$ is a fundamental $\ell$-weight.  The corresponding pseudo $q$-factorization graph will then be referred to as a fundamental factorization graph. Recall \eqref{e:defqfact} and \eqref{e:sl2inRij}. We shall say  $\mathcal F$ is a $q$-factorization map if $\mathcal F(v)$ is a $q$-factor of 
	\begin{equation}\label{e:polyofgraph}
		\bs\pi_{\mathcal F} := \prod_{v\in\mathcal V} \mathcal F(v)\in\mathcal P^+.
	\end{equation}
	In that case, the corresponding pseudo $q$-factorization graph will be referred to as a $q$-factorization graph.
	By abuse of notation, we shall identify $v\in\mathcal V$ with $\mathcal F(v)$, so we can shorten the above to $\bs\pi_{\mathcal F} =  \prod_{v\in\mathcal V} v$. Moreover, we shall abuse of language and simply say ``$G$ is a pseudo $q$-factorization graph'' with no mention to the structure data $(\mathcal V,\mathcal A,\mathcal F)$ and then write $\bs\pi_G$ instead of $\bs\pi_{\mathcal F}$. Despite the lack of accuracy, this should not cause contextual confusion and should be most often beneficial for the conciseness of the text. We shall also say $G$ is a pseudo $q$-factorization graph over $\bs\pi$ if $\bs\pi_G=\bs\pi$. 
	If $H\triangleleft G$. i.e., if $H=(\mathcal U, \mathcal A')$ is a subgraph of $G$, we set
	\begin{equation}\label{e:polyofsubgraph}
		\bs\pi_H = \bs\pi_{\mathcal F|_{\mathcal U}}.
	\end{equation}

	Conversely, given any map $\mathcal F:\mathcal V\to\mathcal{KR}$ defined on a nonempty finite set $\mathcal V$,  we can construct a pseudo $q$-factorization graph having $\mathcal V$ as vertex set and $\mathcal F$ as its $q$-factorization map by defining $\mathcal A$ by the requirement:
	\begin{equation*}
		(v,w)\in\mathcal A \quad\Leftrightarrow\quad V(\mathcal F(v))\otimes V(\mathcal F(w)) \ \ \text{is reducible and highest-$\ell$-weight.}
	\end{equation*}
	We denote this graph by $G(\mathcal F)$. If $\mathcal F$ is an actual $q$-factorization map over $G(\mathcal F)$ and $\bs\pi=\bs\pi_{\mathcal F}$, we also use the notation $G(\bs\pi)$ and call it the $q$-factorization graph of $\bs\pi$.  If $G(\mathcal F)$ is a fundamental factorization graph, we also use the notation $G_f(\bs\pi)$ and call it the fundamental (factorization) graph of $\bs\pi$. 
	
	Given pseudo $q$-factorization graphs $G$ and $G'$ over $\bs\pi$ and $\bs\pi'$, respectively, we denote by $G\otimes  G'$ the unique pseudo $q$-factorization graph over $\bs\pi\bs\pi'$ whose vertex set is $\mathcal V_G\, \dot{\cup}\, \mathcal V_{G'}$. Here, $\mathcal V_G$ denotes the set of vertex of the graph $G$, and so on. 
	
	As commented above, if $G$ affords the structure of a pseudo $q$-factorization graph, it contains no loops nor oriented cycles and, hence, the set $\mathcal A$ induces a partial order on $\mathcal V$ by transitive extension of the relation $h(a)<t(a)$ for all $a\in\mathcal A$. We say $G$ is totally ordered if this order is linear.

	Let $P$ be a representation theoretic property, i.e., a property assignable to modules such as being prime or real. We shall interpret $P$ as a (extrinsic) graphical property as follows. Suppose $H$ is a subgraph of a pseudo $q$-factorization graph $G$. We shall say $H$ satisfies $P$ if $V(\pi_H)$ satisfies $P$. For instance, we shall say $H$ is real if $V(\pi_H)$ is real. 
	Suppose $\mathcal G=G_1,\dots, G_l$ is a multicut of $G$, i.e., the family $G_k, 1\le k\le l$, is a family of subgraphs of $G$ with disjoint vertex sets and whose union is $\mathcal V_G$.  Given a graphical property $P$, we shall say $\mathcal G$ has the property $P$ if $G_k$ has property $P$ for all $k$ (when regarded as subgraphs of $G$).

	By a morphism from a graph $G=(\mathcal V,\mathcal A)$ to a graph $G'=(\mathcal V',\mathcal A')$ we mean a map $f:\mathcal V\to \mathcal V'$ such that
	\begin{equation*}
		\big(f(t(a)),f(h(a))\big)\in\mathcal A' \quad\text{for all}\quad a\in\mathcal A. 
	\end{equation*}
	If $G$ and $G'$ are pseudo $q$-factorization graphs, we further require that
	\begin{equation}
		\mathcal F(v) = \mathcal F'(f(v)) \quad\text{for all}\quad v\in V. 
	\end{equation}
	We use the notation $f:G\to G'$ to indicate that $f$ is a morphism from $G$ to $G'$. If $f$ is a morphism, it induces a map $\mathcal A\to\mathcal A', a\mapsto \big(f(t(a)),f(h(a))\big)$, which we also denote by $f$. 
	We say $f$ is a full morphism if 
	\begin{equation*}
		f(\mathcal A) = \mathcal A'_{G'_{f(\mathcal V)}}.
	\end{equation*}
	If $f$, as well as the induced map on arrows, are bijective, we say $f$ is an isomorphism. If an ismorphism exists, we write $G\cong G'$. 
	
	\subsection{Quochain Decompositions}\label{ss:graphmorp}
	Let us recall the notion of quochains in the sense of \cite{bms}. Let $\mathcal G=G_1,\dots, G_l$ be a multicut of a graph $G$ and set
	\begin{equation}
		\bar G_k = G_{k+1}\otimes\cdots\otimes G_l  \quad\text{for}\quad 0\le k\le l. 
	\end{equation}
	Note 
	\begin{equation}\label{e:whyquochain}
		G_{k}\triangleleft \bar G_{k-1}, \quad  \bar G_k = \bar G_{k-1}\setminus G_k \quad\text{for all}\quad 0< k\le l,
	\end{equation}
	and the sequence $\bar G_0,\dots,\bar G_l$ is a proper descending chain of subgraphs:
	\begin{equation}\label{e:defquochain}
		\emptyset=\bar G_l \triangleleft\bar G_{l-1}\triangleleft\cdots\triangleleft \bar G_1\triangleleft\bar G_0 = G.
	\end{equation}
	We shall refer to this chain as the quochain associated to $\mathcal G$. By abuse of language, we shall often refer to $\mathcal G$ as a quochain as well. 
	
	Suppose $P$ is a ``graphical property'' (including extrinsic properties), i.e., a property assignable to subgraphs of a graph, such as being connected, totally ordered, real, prime, etc.. We shall say the multicut $\mathcal G$ determines a $P$-quochain (or that $\mathcal G$ is a $P$-quochain by abuse of language) if $G_k$ has the property $P$ when regarded as a subgraph of $\bar G_{k-1}$ for all $1\le k\le l$. 
	
	We now introduce two notions which will play a prominent role in this paper, starting with that of isomorphic quochains. 
	Two quochains $G_1,\dots, G_l$ and $G_1',\dots,G'_{l'}$ will be said to be isomorphic if $l'=l$ and there exists $\sigma\in\mathcal S_l$ such that
	\begin{equation*}
		G'_k\cong G_{\sigma(k)} \quad\text{for all}\quad 1\le k\le l.
	\end{equation*}
	We shall say $G$ has a unique $P$-decomposition if $G$ admits a $P$-quochain and every two $P$-quochains in $G$ are isomorphic. 
	For instance, if $P$ is ``being a maximal connected subgraph of $G$'', then $G$ admits a $P$-quochain and any $P$-quochain corresponds to an enumeration of the connected components of $G$. Thus, $G$ has a unique $P$-decomposition. 
	
	We shall be concerned with the case that $P$ is ``being a maximal totally ordered subgraph''. For short, we shall refer to quochains with such property as mtos-quochains and to the associated decompositions as  mtos-decompositions. Evidently, every pseudo $q$-factorization graph $G$ admits an mtos-quochain.

	\subsection{Snake Modules and Totally Ordered Pseudo $q$-Factorization Graphs}\label{ss:snakemods}
	Let us recall the definition of snake modules for type $A$. Given $(i_k,a_k)\in I\times\mathbb Z, k\in\{1,2\}$, it is said that the ordered pair $((i_1,a_1),(i_2,a_2))$ is in snake position if 
	\begin{equation*}
		a_2-a_1 \in \min\mathscr R_{i_1,i_2} + 2\mathbb Z_{\ge 0} =  d(i_1,i_2) + 2\mathbb Z_{>0}. 
	\end{equation*}
	If $a_2-a_1 \in \mathscr R_{i_1,i_2}$, then it is said the pair is in prime snake position. More generally, if $\bs i = (i_1,\dots, i_l)\in I^l$ and $\bs a=(a_1,\dots a_l)\in\mathbb Z^l$, it is said that $(\bs i,\bs a)$ is a (prime) snake if every pair $((i_k,a_k),(i_{k+1},a_{k+1})), 1\le k<l$, is in (prime) snake position. Given $(\bs i,\bs a)\in I^l\times\mathbb Z^l$, set
	\begin{equation*}
		\bs\omega_{\bs i,\bs a} = \prod_{k=1}^l \bs\omega_{i_k,a_k}. 
	\end{equation*}
	If $(\bs i,\bs a)$ is a (prime) snake, the module $V(\bs\omega_{\bs i,\bs a})$ is called a (prime) snake module. The first of our main results gives an alternate perspective for the definition of prime snake modules. 
	
	\begin{thm}\label{t:ps<=>toG}
		Let $\bs\pi\in\mathcal P^+$. Then, $V(\bs\pi)$ is a prime snake module if and only if every pseudo $q$-factorization graph over $\bs\pi$ is totally ordered.  \endd
	\end{thm}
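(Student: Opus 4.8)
The plan is to treat the two implications separately. The ``if'' direction is immediate: $G_f(\bs\pi)$ is itself a pseudo $q$-factorization graph over $\bs\pi$, so if every pseudo $q$-factorization graph over $\bs\pi$ is totally ordered then so is $G_f(\bs\pi)$, whence $V(\bs\pi)$ is a prime snake module. For the ``only if'' direction, suppose $V(\bs\pi)$ is a prime snake module, so $G_f(\bs\pi)$ is totally ordered, and let $G$ be an arbitrary pseudo $q$-factorization graph over $\bs\pi$; I must show $G$ is totally ordered. First I would record the normal form of a totally ordered $G_f(\bs\pi)$: since $\mathscr R_{i,j}\subseteq\mathbb Z_{>0}$, the index $a$ of the fundamental $\ell$-weight $\bs\omega_{i,a}$ carried by a vertex strictly decreases along arrows, hence along directed paths, so the linear order on $G_f(\bs\pi)$ is precisely the order by this index; in particular the indices of the fundamental factors of $\bs\pi$ are pairwise distinct, i.e. $\bs\pi$ is multiplicity-free. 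Writing $v_1<\dots<v_l$ for the vertices listed accordingly, each consecutive pair $v_{k+1}>v_k$ must be joined by an arrow $(v_{k+1},v_k)$, since any realizing directed path of length $\ge 2$ would exhibit a vertex strictly between them.

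The crux is then the following claim, which is exactly where total-orderedness of $G_f(\bs\pi)$ enters: for each vertex $w=\bs\omega_{i,a,r}$ of $G$, the fundamental factors $\bs\omega_{i,a-r+1},\bs\omega_{i,a-r+3},\dots,\bs\omega_{i,a+r-1}$ occurring in $w$ form a contiguous block $v_p,v_{p+1},\dots,v_{p+r-1}$ of the chain $v_1<\dots<v_l$ (they are fundamental factors of $\bs\pi$, hence among the $v_k$). To prove it one rules out the existence of a vertex $v_m=\bs\omega_{j,c}$ of $G_f(\bs\pi)$ with $a-r+1<c<a+r-1$ that is not one of the factors of $w$. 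If $c$ has the same parity as $a+r-1$, then $\bs\omega_{i,c}$ is already a factor of $w$, so $v_m$ would have $j\ne i$, making $\bs\omega_{i,c}$ and $\bs\omega_{j,c}$ two fundamental factors of $\bs\pi$ with equal index and hence non-comparable in $G_f(\bs\pi)$. If $c$ has the opposite parity, then $\bs\omega_{i,c-1}$ and $\bs\omega_{i,c+1}$ are factors of $w$ (hence of $\bs\pi$), and either $j=i$, forcing the non-comparable pair $\bs\omega_{i,c},\bs\omega_{i,c-1}$ (as $1\notin\mathscr R_{i,i}$ and paths strictly decrease indices), or $j\ne i$, forcing the non-comparable pair $\bs\omega_{i,c+1},\bs\omega_{j,c}$ (as $1\notin\mathscr R_{i,j}$, because $\min\mathscr R_{i,j}=d(i,j)+2\ge 3$). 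Every case contradicts total-orderedness of $G_f(\bs\pi)$; together with multiplicity-freeness (which handles the two endpoints), this shows the factors of $w$ are exactly the $v_k$ with index in $[a-r+1,a+r-1]$ and that they form a block.

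Granting the claim, the rest is bookkeeping with the definition \eqref{e:defredset'}. The blocks attached to distinct vertices of $G$ partition $\{v_1,\dots,v_l\}$ and, being sub-intervals of the chain, tile it by consecutive intervals; enumerate the vertices of $G$ as $w_1,\dots,w_m$ so that the block of $w_t$ precedes that of $w_{t+1}$, and write $w_t=\bs\omega_{i,a,r}$ and $w_{t+1}=\bs\omega_{j,b,s}$. Since $\bs\omega_{i,a+r-1}$ (the top of the block of $w_t$) and $\bs\omega_{j,b-s+1}$ (the bottom of the block of $w_{t+1}$) are consecutive in the chain, we get $(b-s+1)-(a+r-1)\in\mathscr R_{i,j}$; a short computation with \eqref{e:defredset'}, matching parities and the two endpoints of the relevant arithmetic progressions, shows this forces $b-a\in\mathscr R_{j,i}^{s,r}$, i.e. $(w_{t+1},w_t)$ is an arrow of $G$. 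Hence $G$ contains the directed path $w_m\to\dots\to w_1$, so $w_1<\dots<w_m$ in the partial order of $G$; conversely every arrow of $G$ strictly decreases the center of its endpoints, hence decreases the block index, so $G$ has no relation beyond this linear order and $G$ is totally ordered.

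The step I expect to be the main obstacle is the contiguous-block claim of the middle paragraph: it is the only place where total-orderedness of $G_f(\bs\pi)$ is used essentially, and it is what makes precise the idea that passing from $G_f(\bs\pi)$ to an arbitrary $G$ over $\bs\pi$ merely fuses consecutive segments of the chain, in the spirit of the ``switching between pseudo $q$-factorization graphs'' discussed in \Cref{ss:fuse}. Everything else is elementary, the only further care being the parity-and-distinctness bookkeeping (notably the repeatedly used fact that total-orderedness of $G_f(\bs\pi)$ already forces $\bs\pi$ to be multiplicity-free); no input beyond the combinatorics of the sets $\mathscr R_{i,j}^{r,s}$ is needed.
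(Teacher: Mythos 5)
Your argument is correct, but it reaches the substantive direction by a genuinely different route from the paper. For ``prime snake $\Rightarrow$ every pseudo $q$-factorization graph over $\bs\pi$ is totally ordered'', the paper uses the vertex-fusion analysis of \Cref{ss:fuse}: any pseudo $q$-factorization graph over $\bs\pi$ is obtained from $G_f(\bs\pi)$ by pure fusions in which at least one fused vertex is fundamental, and \Cref{e:topreserved}/\Cref{c:topreserved} show such fusions preserve total order. You instead argue globally on the chain: a totally ordered $G_f(\bs\pi)$ is linearly ordered by the spectral parameter with pairwise distinct parameters, so the fundamental constituents of each KR vertex of an arbitrary graph $G$ over $\bs\pi$ form a contiguous block of the chain (your parity/incomparability analysis is sound: every arrow of $G_f(\bs\pi)$ drops the parameter by at least $2$, so a vertex with parameter strictly inside a KR string, or equal to one of its endpoints, would be incomparable to some chain vertex), the blocks tile the chain, and the single arrow joining the two chain-consecutive fundamental factors of adjacent blocks upgrades to an arrow of $G$, since $b-a=r+s+d(i,j)-2p_0$ with $-d([i,j],\partial I)\le p_0\le 0<\min\{r,s\}$; this produces a Hamiltonian directed path in $G$ and hence a total order. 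In effect you re-derive globally, in the fundamental-chain situation, what the paper's fusion lemma does one step at a time; the paper's local version is reusable elsewhere (e.g.\ for connected subgraphs of prime snake graphs), while yours is more self-contained but specific to this theorem. One caveat: your ``if'' direction is not literally immediate --- passing from ``$G_f(\bs\pi)$ totally ordered'' to ``$V(\bs\pi)$ is a prime snake module'' requires exhibiting $\bs\pi=\bs\omega_{\bs i,\bs a}$ with $(\bs i,\bs a)$ a prime snake, which is exactly the paper's converse argument; however, your ``normal form'' paragraph (distinct parameters, consecutive vertices joined by single arrows, hence $a_{k+1}-a_k\in\mathscr R_{i_k,i_{k+1}}$) uses only total-orderedness of $G_f(\bs\pi)$ and supplies precisely this, so the proof is complete once that observation is invoked in the right place rather than dismissed as immediate.
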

	
	This theorem will be proved as an application of a result about the concept of fusing vertices of a pseudo $q$-factorization graph which we introduce in \Cref{ss:fuse}. 
	
	\subsection{Snake Support and Prime Factorization} \label{ss:sspf}
	Given $\bs\pi\in\mathcal P^+$, let $\mathcal F$ be the pseudo $q$-factoriza\-tion map associated to $G:=G_f(\bs\pi)$ and consider
	\begin{equation}
		\bbs\pi = \prod_{\bs\omega\in\mathcal F(\mathcal V)}\bs\omega.  
	\end{equation}
	By definition, $\bbs\pi$ has exactly one copy of each fundamental factor of $\bs\pi$. We shall say $\bs\pi$ has snake support if $V(\bbs\pi)$ is a prime snake module. As usual, by abuse of language, we shall also say $G$ has snake support. We are ready to state the second of our main results.
	
	\begin{thm}\label{t:orddecss}
		If $\bs\pi$ has snake support,  then $G=G_f(\bs\pi)$ has a unique mtos-decomposition. Moreover, if $\mathcal G=G_1,\dots,G_l$ is an mtos-quochain, then
		\begin{equation*}
			V(\bs\pi)\cong V(\bs\pi_{G_1})\otimes \cdots\otimes V(\bs\pi_{G_l})
		\end{equation*} 
		is the unique prime factorization of $V(\bs\pi)$. \endd
	\end{thm}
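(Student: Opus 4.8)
The plan is to reduce the statement to a combinatorial analysis of ``blow-ups'' of totally ordered graphs, and then feed that into standard cyclicity arguments. Since $\bs\pi$ has snake support, $V(\bbs\pi)$ is a prime snake module, so \Cref{t:ps<=>toG} gives that $C:=G_f(\bbs\pi)$ is totally ordered; fix an enumeration $u_1,\dots,u_n$ of its vertices along the induced linear order. The starting observation is that $G=G_f(\bs\pi)$ is a \emph{blow-up} of $C$: if $u_k$ occurs in $\bs\pi$ with multiplicity $m_k$, then $G$ is obtained from $C$ by replacing each $u_k$ with an independent set of $m_k$ copies, all having the in- and out-neighborhood that $u_k$ has in $G$. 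Indeed, adjacency of two vertices in a fundamental factorization graph depends only on the $\ell$-weights they carry, and two vertices carrying the same fundamental $\ell$-weight are never adjacent since $0\notin\mathscr R_{i,j}$.

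The combinatorial core consists of two facts about a blow-up $G$ of an induced subgraph $C'$ of $C$; the generality of induced subgraphs is needed because, after removing the first mtos, the underlying ``reduced graph'' is in general only an induced subgraph of $C$, not $C$ itself. \emph{(i)} Any totally ordered subgraph of $G$ carries each $\ell$-weight of $C'$ at most once: two distinct copies of a vertex of $C'$ are joined neither by an arrow nor by a directed path, for such a path would project to an oriented closed walk in $G$, and there is none. \emph{(ii)} Every mtos of $G$ is isomorphic to a transversal of a maximal totally ordered induced subgraph of $C'$; in particular, when $C'=C$, the mtos of $G$ are exactly the transversals of $C$, each isomorphic to $C$. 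Granting \emph{(i)}--\emph{(ii)}, I would induct on $\#\mathcal V_G$: the first term $G_1$ of any mtos-quochain of $G$ is a full transversal of $C$, so $\bs\pi_{G_1}=\bbs\pi$ and $\bar G_1=G_f(\bs\pi\bbs\pi^{-1})$ is the blow-up of the induced subgraph of $C$ on the vertices of multiplicity at least $2$ in $\bs\pi$, to which the inductive hypothesis applies; piecing the terms together shows that all mtos-quochains of $G$ are isomorphic, i.e.\ that $G$ has a unique mtos-decomposition. The bookkeeping needed for \emph{(ii)} — precisely controlling which ``levels'' may be skipped when $C'$ is disconnected or has incomparable vertices — is, I expect, the main combinatorial obstacle.

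Now let $G_1,\dots,G_l$ be an mtos-quochain. Each $G_k$ is totally ordered, so by \emph{(i)} it carries every fundamental $\ell$-weight at most once; being a maximal totally ordered subgraph it is moreover induced (a missing arrow would either be redundant, contradicting maximality, or create an oriented cycle), so $G_k=G_f(\bs\pi_{G_k})$, and since this fundamental factorization graph is totally ordered, $V(\bs\pi_{G_k})$ is a prime snake module by the characterization of prime snakes recalled in the introduction — in particular prime. It remains to prove $\bigotimes_{k=1}^l V(\bs\pi_{G_k})\cong V(\bs\pi)$; since $\bs\pi=\prod_k\bs\pi_{G_k}$, it suffices to show this tensor product is simple, which I would do by showing it is simultaneously cyclic and cocyclic on the tensor of (co)highest-$\ell$-weight vectors. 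Ordering the factors along the descending chain $\emptyset=\bar G_l\triangleleft\cdots\triangleleft\bar G_0=G$ and feeding the reducibility/highest-$\ell$-weight dichotomy \eqref{e:defredset}--\eqref{e:krhwtp} for the underlying fundamental factors into the cyclicity machinery for tensor products of snake-type modules used in \cite{ms:to,bms} should yield cyclicity; applying the same argument to the dual module — duality preserves snake support, since in type $A_n$ the involution $i\mapsto i^*$ fixes $d(i,j)$, the intervals $[i,j]$ and $\partial I$, hence every set $\mathscr R_{i,j}$ — yields cocyclicity, and hence simplicity. I expect this cyclicity/cocyclicity step to be the representation-theoretic heart of the argument.

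Finally, for uniqueness of the prime factorization, let $V(\bs\pi)\cong\bigotimes_j V(\bs\sigma_j)$ be an arbitrary prime factorization; the multisets of fundamental factors of the $\bs\sigma_j$ partition $\mathcal V_G$, with $G_f(\bs\sigma_j)$ the corresponding induced subgraph of $G$. Using the blow-up structure, the realness of $V(\bs\pi)$ (a simple tensor product of the real modules $V(\bs\pi_{G_k})$), and \eqref{e:defredset}--\eqref{e:krhwtp}, I would argue that a partition not of the above transversal type must split a pair of adjacent fundamental factors between two blocks and therefore make $\bigotimes_j V(\bs\sigma_j)$ reducible, contradicting $\bigotimes_j V(\bs\sigma_j)\cong V(\bs\pi)$; hence every prime factorization arises from an mtos-quochain, and the desired uniqueness follows from that of the mtos-decomposition established above.
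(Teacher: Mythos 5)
Your combinatorial half is essentially the paper's own route: your transversal description of the mtos of a blow-up is exactly the content of \Cref{l:maxtosgss} (a subgraph $H$ of $G_f(\bs\pi)$ is an mtos iff $\bs\pi_H=\bbs\pi$), and your recursion on the ``multiplicity $\ge 2$'' part matches \Cref{l:cchss}; the bookkeeping you flag is real but manageable. The genuine gaps are in the representation theory. Your plan to prove simplicity of $V(\bs\pi_{G_1})\otimes\cdots\otimes V(\bs\pi_{G_l})$ by cyclicity plus cocyclicity extracted from \eqref{e:defredset}--\eqref{e:krhwtp} at the level of fundamental factors cannot work: by construction the pieces of the quochain satisfy (componentwise) $\bs\pi_{G_{k+1}}\mid \bs\pi_{G_k}$, so fundamental factors lying in \emph{different} tensor factors are typically in reducible position, and then the reversed (cocyclic) order is never highest-$\ell$-weight at the fundamental level. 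Concretely, for $\bs\pi=\bs\omega_{i,a}^2\,\bs\omega_{i,a+2}$ the quochain is $G_1=G_f(\bs\omega_{i,a}\bs\omega_{i,a+2})$, $G_2=\{\bs\omega_{i,a}\}$, and $V(\bs\omega_{i,a})\otimes V(\bs\omega_{i,a}\bs\omega_{i,a+2})$ cannot be shown highest-$\ell$-weight via \Cref{t:cyc} applied to fundamental factors, since $V(\bs\omega_{i,a})\otimes V(\bs\omega_{i,a+2})$ is not highest-$\ell$-weight; the product is nevertheless simple, but that is a nontrivial theorem, not a cyclicity formality. The paper reduces to pairwise simplicity via \Cref{c:cyc} (using realness of snake modules), proves the dichotomy of \Cref{l:sspf} (either $\bs\pi_{G_l}$ divides $\bs\pi_{G_k}$ or $G_k,G_l$ sit in different components), and then invokes Naoi's result \Cref{p:testworksforsnakes}. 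Your proposal contains no substitute for that input, and this is the actual representation-theoretic heart of the existence statement.

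The uniqueness step has a second gap: the claim that a partition ``not of transversal type must split a pair of adjacent fundamental factors between two blocks and therefore make the product reducible'' is false as stated, because the correct factorization itself splits adjacent pairs across blocks (in the example above, $\bs\omega_{i,a}\in\mathcal V_{G_2}$ is adjacent in $G$ to $\bs\omega_{i,a+2}\in\mathcal V_{G_1}$, yet the tensor product is simple). What is actually needed, and what the paper proves in the unnumbered lemma closing \Cref{ss:orddecss}, is that if no reordering of the given fundamental factorization graphs of prime snakes is an mtos-quochain, then some pair $V(\bs\pi_{G_k})\otimes V(\bs\pi_{G_m})$ is reducible; the proof is not a consequence of \eqref{e:defredset}--\eqref{e:krhwtp} but rests on the extended T-system reducibility \eqref{e:my:tsysred} together with the $\ell$-weight statements \eqref{e:my:tsyslw} and \eqref{e:my:tsysnlw} of Mukhin--Young, combined through \Cref{l:extocc} and \eqref{e:rmhwtp} to exhibit an $\ell$-weight of the tensor product not occurring in $V(\bs\pi_{G_k}\bs\pi_{G_m})$, extending the Brito--Chari argument. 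You also need to know beforehand that every prime tensor factor of $V(\bs\pi)$ is itself a prime snake module (the paper deduces this by applying the already-established existence statement to each $\bs\varpi_k$); your sketch bypasses this step, and without it the reduction of an arbitrary prime factorization to an mtos-quochain does not get off the ground.
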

	
	The following lemma will be proved in \Cref{ss:monoc}.

	\begin{lem}\label{l:moncto}
		If $\bs\pi\in\mathcal P^+_i$ for some $i\in I$, the connected components of $G_f(\bbs\pi)$ are totally ordered.\endd
	\end{lem}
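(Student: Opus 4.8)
The plan is to reduce the statement to an elementary fact about finite subsets of $\mathbb Z$. Since $\bs\pi\in\mathcal P_i^+$, every fundamental factor of $\bs\pi$ has the form $\bs\omega_{i,a}$, so $\bbs\pi=\prod_{a\in S}\bs\omega_{i,a}$ for a finite set $S\subseteq\mathbb Z$, with each factor occurring exactly once; accordingly the vertex set of $G:=G_f(\bbs\pi)$ is identified with $S$, and, by the construction of $G(\mathcal F)$ together with \eqref{e:defredset}--\eqref{e:krhwtp}, $(a,b)$ is an arrow of $G$ if and only if $a,b\in S$ and $a-b\in\mathscr R_{i,i}^{1,1}$. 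The first step is to unwind \eqref{e:defredset'} in this case: taking $r=s=1$ and $j=i$ gives $d(i,i)=0$ and $\min\{r,s\}=1$, so $\mathscr R_{i,i}^{1,1}=\{\,2-2p:-d(\{i\},\partial I)\le p\le 0\,\}=\{2,4,\dots,M\}$, the set of \emph{all} even integers between $2$ and $M:=2+2\,d(\{i\},\partial I)\ge 2$.

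Two observations then frame the argument. First, adjacent vertices of $G$ differ by an element of $\mathscr R_{i,i}^{1,1}$, hence by an even integer, so every connected component $T$ of $G$ consists of integers of one fixed parity. Second, every arrow $(a,b)$ of $G$ satisfies $a>b$, so the arrow relation is contained in the usual strict order of $\mathbb Z$ and its transitive closure is automatically a strict partial order on each $T$. Hence $T$ is totally ordered as soon as any two distinct elements of $T$ are joined by a directed path, and we may assume $a>b$ when checking this.

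The heart of the proof is the claim that, for $a\in T$ with $a\ne\min T$, the ``$T$-predecessor'' $c:=\max\{t\in T:t<a\}$ satisfies that $(a,c)$ is an arrow of $G$. This is where connectedness of $T$ enters, and it is the only place it is used: $a-c$ is a positive even integer by the two observations above, so it lies in $\mathscr R_{i,i}^{1,1}$ provided $a-c\le M$; and if $a-c>M$ then $T$ would be the disjoint union of the two nonempty sets $\{t\in T:t\le c\}$ and $\{t\in T:t\ge a\}$ with no edge of $G$ joining them, since the endpoints of an edge differ by at most $M<a-c$ --- contradicting the connectedness of $T$. I expect this gap estimate to be the only genuine point; everything else is bookkeeping.

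Finally, I would iterate the claim: set $a_0:=a$ and, as long as $a_k\ne\min T$, put $a_{k+1}:=\max\{t\in T:t<a_k\}$. This yields a strictly decreasing sequence in the finite set $T$ whose terms exhaust $\{t\in T:t\le a\}$ and each consecutive pair of which is an arrow of $G$. Since $b<a$ and $b\in T$, we have $b=a_k$ for some $k$, and $a_0\to a_1\to\cdots\to a_k=b$ is the required directed path. Therefore the partial order induced on each connected component of $G_f(\bbs\pi)$ is linear, which is the assertion of the lemma.
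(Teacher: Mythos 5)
Your proof is correct: the computation $\mathscr R_{i,i}^{1,1}=\{2,4,\dots,2+2d(i,\partial I)\}$ from \eqref{e:defredset'} is right, the parity of vertices is indeed constant on each connected component, and the gap estimate (if the predecessor gap exceeded $2+2d(i,\partial I)$, the component would split, since every arrow joins parameters differing by at most that bound) is exactly the point where connectedness must enter; iterating the predecessor step then gives a directed path through the component, which is what linearity of the induced order requires. The paper argues in the same spirit but with a different execution: it assumes the graph connected, enumerates the spectral parameters increasingly, uses connectedness to produce an arrow from a later vertex to an earlier one across each cut, and then invokes the general chain lemma \Cref{l:positiveppath}(a) to conclude that all intermediate pairs are joined by arrows (in particular numerically consecutive vertices are adjacent). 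Your argument replaces that citation by the explicit description of $\mathscr R_{i,i}^{1,1}$ as the full set of even integers in $[2,\,2+2d(i,\partial I)]$ together with the parity observation, so it is self-contained and works component by component, at the cost of only producing arrows between consecutive elements of a component rather than the stronger adjacency of all intermediate pairs that the paper's interpolation lemma yields; for the statement of \Cref{l:moncto} this weaker conclusion is entirely sufficient. Both proofs ultimately rest on the same structural fact, namely that for fundamental $\ell$-weights supported at a single node in type $A$ the reducibility set is an initial segment of the positive even integers, so adjacency is downward closed among same-parity pairs.
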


	As a consequence, if $\bs\pi\in\mathcal P^+_i$ for some $i\in I$, the prime factorization of $V(\bs\pi)$ is given by \Cref{t:orddecss}. The prime factorization of such modules was described in \cite[Theorem 2]{bc:hokr} in terms of the concept of $(i,n)$-segments. Thus, \Cref{t:orddecss} can be seen as a generalization of \cite[Theorem 2]{bc:hokr}. 
	We now establish the dictionary between the two languages. 
	
	Given $\bs k=(k_1,\dots,k_l)\in \mathbb Z^l$, set 
	\begin{equation*}
		\bs k^\pm = (k_1^\pm,\dots, k_{l-1}^\pm) \quad\text{with}\quad k_s^\pm = k_{s+1}\pm k_s. 
	\end{equation*}
	An $(i,n)$-segment of length $l$ was defined in \cite{bc:hokr} as a sequence $\bs k=(k_1,k_2,\dots,k_l)\in\mathbb Z^l$ such that 
	\begin{equation}
		k_s^- \in \mathscr R_{i,i} \quad\text{for all}\quad 1\le s<l.
	\end{equation}
	We shall denote by $S_{i,n,l}$ the set of $(i,n)$-segments of length $l$ and set
	\begin{equation*}
		S_{i,n}=\bigcup_{l\in\mathbb Z_{>0}} S_{i,n,l}.
	\end{equation*}
	Given $\bs k\in S_{i,n}$, we let $\ell(\bs k)$ denote its length, i.e., $\ell(\bs k)=l$ if and only if $\bs k\in S_{i,n,l}$.

	Given $\bs k\in\mathbb Z^l, i\in I,$ and $a\in\mathbb F^\times$, set
	\begin{equation*}
		\bs\varpi_{i,\bs k,a} = \prod_{s=1}^l \bs\omega_{i,a+k_s}\in\mathcal P_i^+.
	\end{equation*}
	Note that, if $\bs k\in S_{i,n}$ and $G=G_f(\bs\varpi_{i,\bs k,a})$, then
	\begin{equation*}
		(\bs\omega_{i,a+k_{s+1}},\bs\omega_{i,a+k_{s}})\in\mathcal A_G \quad\text{for all}\quad 1 \le s<\ell(\bs k).
	\end{equation*}
	Thus, the fundamental factorization graph associated to the Drinfeld polynomial of an $(i,n)$-segment is totally ordered. 
	In other words, the concept of $(i,n)$-segments is equivalent to that of prime snakes supported at a single node of the Dynkin diagram. In particular, it follows from \Cref{t:ps<=>toG}  that any pseudo $q$-factorization graph over $\bs\varpi_{i,\bs k,a}$ is totally ordered. In particular, the corresponding simple $U_q(\tlie g)$-module is prime and strongly real in the sense of \cite{bms}. Continuing along the lines of the proof of \Cref{t:ps<=>toG}, we shall prove the following in \Cref{ss:monoc}. 
	
	\begin{thm}\label{p:equivinseg}
		Let $\bs\pi\in\mathcal P^+_i$ for some $i\in I$. The following conditions are equivalent:
		\begin{enumerate}[(i)]
			\item $\bs\pi = \bs\varpi_{i,\bs k,a}$ for some $\bs k\in S_{i,n}, a\in\mathbb Z$.
			\item $V(\bs\pi)$ is a prime snake module.
			\item $G_f(\bs\pi)$ is totally ordered.
			\item $G(\bs\pi)$ is totally ordered. 
			\item $V(\bs\pi)$ is prime. \endd
		\end{enumerate}
	\end{thm}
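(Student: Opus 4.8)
The plan is to establish the equivalences $(i)\Leftrightarrow(ii)\Leftrightarrow(iii)$ essentially by unwinding definitions, and then $(ii)\Rightarrow(iv)\Rightarrow(v)\Rightarrow(iii)$, which closes the loop and yields the equivalence of all five conditions. For $(ii)\Leftrightarrow(iii)$ there is nothing to do: as recalled just before \Cref{t:ps<=>toG}, a simple module is a prime snake module precisely when its fundamental factorization graph is totally ordered. For $(i)\Leftrightarrow(ii)$: since $\sup(\bs\pi)=\{i\}$, any expression $\bs\pi=\bs\omega_{\bs j,\bs b}$ forces every entry of $\bs j$ to be $i$, and then the requirement that $(\bs j,\bs b)$ be a prime snake is exactly $b_{s+1}-b_s\in\mathscr R_{i,i}$ for all $s$; as prime snake positions force $\bs b$ strictly increasing, $\bs b$ is intrinsically the sorted list of exponents of the fundamental factors of $\bs\pi$, and the displayed condition says precisely that $(b_s-b_1)_{1\le s\le l}\in S_{i,n}$. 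Hence $V(\bs\pi)$ is a prime snake module if and only if $\bs\pi=\bs\varpi_{i,\bs k,a}$ for some $\bs k\in S_{i,n}$, $a\in\mathbb Z$. The implication $(ii)\Rightarrow(iv)$ follows from \Cref{t:ps<=>toG}: a prime snake module has \emph{every} pseudo $q$-factorization graph totally ordered, in particular $G(\bs\pi)$; and $(iv)\Rightarrow(v)$ is the main theorem of \cite{ms:to}.

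The substantive step is $(v)\Rightarrow(iii)$, and this is where \Cref{l:moncto} and \Cref{t:orddecss} enter. Assume $V(\bs\pi)$ is prime and let $H_1,\dots,H_c$ be the connected components of $G=G_f(\bs\pi)$. For each $k$, the set of fundamental factors occurring in $H_k$ spans a connected subgraph of $G_f(\bbs\pi)$; because $G$ has no arrows running between distinct components, this subgraph is in fact a full connected component of $G_f(\bbs\pi)$, hence totally ordered by \Cref{l:moncto}. Thus $V(\overline{\bs\pi_{H_k}})$ is a prime snake module, i.e.\ $\bs\pi_{H_k}$ has snake support. Using that the simple module attached to a pseudo $q$-factorization graph is the tensor product of the simple modules attached to its connected components, we get $V(\bs\pi)\cong V(\bs\pi_{H_1})\otimes\cdots\otimes V(\bs\pi_{H_c})$; since each factor is nontrivial and $V(\bs\pi)$ is prime, $c=1$, so $G$ is connected and $\bs\pi$ itself has snake support. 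Now \Cref{t:orddecss} applies: for any mtos-quochain $G_1,\dots,G_l$ of $G$ we have $V(\bs\pi)\cong V(\bs\pi_{G_1})\otimes\cdots\otimes V(\bs\pi_{G_l})$ with each factor prime and nontrivial, so primeness forces $l=1$; but $l=1$ means $G$ itself is a maximal totally ordered subgraph, i.e.\ $G=G_f(\bs\pi)$ is totally ordered, which is $(iii)$.

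The main obstacle, I expect, is bookkeeping rather than a hard estimate: in $(v)\Rightarrow(iii)$ one must carefully match the connected components of $G_f(\bs\pi)$ — which record multiplicities and may contain several isolated copies of a single fundamental factor — with those of $G_f(\bbs\pi)$, and one must pin down, and cite a precise reference for, the decomposition of $V(\bs\pi)$ as a tensor product along the connected components of $G_f(\bs\pi)$. Apart from that, the argument is a matter of assembling \Cref{t:ps<=>toG}, \Cref{l:moncto}, \Cref{t:orddecss}, the primeness criterion of \cite{ms:to}, and the combinatorial definitions.
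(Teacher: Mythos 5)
Your proposal is correct and its core matches the paper's argument: the paper likewise disposes of (i)$\Leftrightarrow$(ii)$\Leftrightarrow$(iii) by the combinatorial remarks preceding the statement, gets (iii)$\Rightarrow$(iv) from \Cref{c:topreserved} (you use \Cref{t:ps<=>toG}, which is the same content), gets (iv)$\Rightarrow$(v) from the main theorem of \cite{ms:to}, and handles primeness exactly as you do -- primeness forces $G_f(\bs\pi)$ to be connected, \Cref{l:moncto} then gives snake support, and \Cref{t:orddecss} forces the mtos-quochain to have length one; your write-up is in fact a more careful version of this step (the matching of components of $G_f(\bs\pi)$ with those of $G_f(\bbs\pi)$, and the tensor-factorization over connected components, which indeed should be cited from \cite{ms:to}, are used only implicitly in the paper). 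The one divergence is that the paper, after closing this cycle, still presents a direct, purely combinatorial proof that (iv)$\Rightarrow$(iii) for monochromatic $\bs\pi$ (\Cref{l:noto}, \Cref{l:ext11}, \Cref{l:to=>seg}: no repeated $q$-factors and $\bs k(\bs\pi)\in S_{i,n}$ when $G(\bs\pi)$ is totally ordered), framing it as the remaining step; you bypass this entirely by routing (iv)$\Rightarrow$(v)$\Rightarrow$(iii) through the representation-theoretic machinery. Logically your shorter route suffices and involves no circularity (the proof of \Cref{t:orddecss} does not use this theorem), but it leans wholly on the heavy input (\cite{ms:to} primality and \Cref{t:orddecss}), whereas the paper's extra lemmas give an independent combinatorial handle on (iv)$\Rightarrow$(iii) and explain why that implication fails without the monochromatic hypothesis.
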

	
	It follows from the results of this \Cref{ss:sspf} that,  for monochromatic fundamental factorization graphs, the concept of mtos-quochain  is equivalent to that of maximal prime snake quochains or, equivalently, quochains of maximal $(i,n)$-segments.

	\subsection{Further Comments}\label{ss:evmcg}
	\Cref{ss:sspf} implies that a ``having snake support'' is a sufficient condition for the prime factorization to arise from mtos-decompositions. Are there other classes of modules with the same property? Is there $\bs\pi$ such that $G(\bbs\pi)$ is totally ordered but the prime factorization does not arise from studying mtos-decompositions? These are questions we find worth of further investigation. 
	
	Let us end our discussion by making some comments related to these questions according to the number of vertices in $G(\bs\pi)$. Evidently, if the number of vertices is at most two, then the prime factorization arises from mtos-decompositions. In the case of $3$ vertices, the prime factorization follows from the results of \cite{ms:3tree}, which we now recall. If $G(\bs\pi)$ has more than one connected component, then all of them are totally ordered trees and, hence, prime and strongly real. In that case, if $G_k, 1\le k\le l$ with $2\le l\le 3$ is an enumeration of the connected components of $G$, then
	\begin{equation*}
		V(\bs\pi) \cong \bigotimes_{k=1}^l V(\bs\pi_{G_k})
	\end{equation*}
	is the prime factorization. Thus, henceforth assume $G$ is connected. If $G$ is totally ordered, then $G$ is prime by \cite{ms:to} and there is nothing further to be done. This happens exactly when $G$ is either a triangle or a tree which is a directed path. If $G$ is not totally ordered, then it must be of the form
	\begin{equation}\label{e:3valg}
		\begin{tikzcd}
			\stackrel{r_1}{i_1} & \arrow[swap,l,"m_1"]  \stackrel{r}{i} \arrow[r,"m_2"] & \stackrel{r_2}{i_2} 
		\end{tikzcd} \qquad\text{or}\qquad 
		\begin{tikzcd}
			\stackrel{r_1}{i_1} \arrow[r,"m_1"] &   \stackrel{r}{i}  & \arrow[swap,l,"m_2"] \stackrel{r_2}{i_2} 
		\end{tikzcd}
	\end{equation}
	The corresponding prime factorization is described by the following theorem. We need to review further notation for the statement. If  $J\subseteq I$ is a connected subdiagram and $[i,j]\subseteq J$, let $\mathscr R_{i,j,J}^{r,s}$ be defined as in \eqref{e:defredset'} with $J$ in place of $I$. We also let $w_0^J$ denote the non trivial diagram automorphism of $J$. 
	
	\begin{thm}[{\cite[Theorem 2.4.6]{ms:3tree}}]\label{t:3lineprime}
		Assume $\lie g$ is of type $A$ and let $G=G(\bs\pi)$ be an alternating line as above. For $j=1,2$, let also $I_j\subseteq I$ be the minimal connected subdiagram containing $[i,i_j]$ such that $m_j\in\mathscr R_{i,i_j,I_j}^{r,r_j}$ and let $j'$ be such that $\{j,j'\}=\{1,2\}$. Then, $G$ is not prime if and only if there exists $j\in\{1,2\}$ such that
		\begin{equation*}\label{2gencondm}
			i_{j'}\in I_j, \qquad m_{j'}\in{\mathscr{R}_{i,i_{j'},I_j}^{r,r_{j'}}}, \qquad m_{j'}-m_j+\check h_{I_j}\in\mathscr R_{w_0^{I_j}(i_j),i_{j'},I_j}^{r_j,r_{j'}},
		\end{equation*}
		and
		\begin{equation}\label{2exracondm}
			m_j+r_j \le m_{j'} + r_{j'} +d(i_1,i_2).
		\end{equation}
		In that case, $V(\bs\pi)\cong V(\bs\omega)\otimes V(\bs\pi\bs\omega^{-1})$, where $\bs\omega$ is the $q$-factor corresponding to such $j$.\qed
	\end{thm}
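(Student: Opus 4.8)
The plan is to reduce the primeness question for the alternating line $G=G(\bs\pi)$ to the two–vertex case. Write $\mathcal F(i)=\bs\omega_{i,a,r}$ and $\mathcal F(i_\ell)=\bs\omega_{i_\ell,a_\ell,r_\ell}$ for $\ell=1,2$, so that $m_\ell=|a-a_\ell|\in\mathscr R^{r,r_\ell}_{i,i_\ell}$ and the arrow directions in \eqref{e:3valg} are those forced by \eqref{e:krhwtp}; since the line is alternating there is no arrow between $i_1$ and $i_2$, so $V(\bs\omega_{i_1,a_1,r_1})$ and $V(\bs\omega_{i_2,a_2,r_2})$ commute and $V(\bs\pi)$ is the head of $V(\bs\omega_{i,a,r})\otimes V(\bs\omega_{i_1,a_1,r_1})\otimes V(\bs\omega_{i_2,a_2,r_2})$ in either order compatible with the partial order. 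For a leaf $j$ I set $\bs\omega=\bs\omega_{i_j,a_j,r_j}$ and $\bs\pi'=\bs\pi\bs\omega^{-1}$, so that $G\setminus\{i_j\}$ is the two–vertex $q$-factorization graph over $\bs\pi'$ and $V(\bs\pi')$ is prime. Everything hinges on the standard criterion (used throughout \cite{ms:to,ms:3tree,bms}) that, for simple modules, $V\otimes W$ is simple iff both $V\otimes W$ and $W\otimes V$ are highest-$\ell$-weight; so what I must prove is that $V(\bs\pi)\cong V(\bs\omega)\otimes V(\bs\pi')$ holds exactly when both $V(\bs\omega)\otimes V(\bs\pi')$ and $V(\bs\pi')\otimes V(\bs\omega)$ are highest-$\ell$-weight, for one of the two leaves $j$. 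One of these two orderings is automatic: reading off the arrow incident to $i_j$ and using \eqref{e:krhwtp} together with the commutativity of the two leaves, the product taken in the order prescribed by that arrow surjects onto $V(\bs\pi)$. So the whole content is to pin down when the \emph{other} ordering is highest-$\ell$-weight.

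For the ``if'' direction I would assume the three membership conditions of \Cref{t:3lineprime} together with \eqref{2exracondm} and show the remaining ordering is highest-$\ell$-weight by a $q$-character computation. First I would reduce to the fundamental case $r=r_1=r_2=1$: using the $T$-system short exact sequences to resolve each KR factor by tensor products of fundamental modules, a highest-$\ell$-weight triple product of KR modules is highest-$\ell$-weight once the associated product of fundamentals is, so one inducts on $r+r_1+r_2$. In the fundamental case, $I_j$ is precisely the $\mathfrak{sl}$-subchain in which the three spectral parameters ``fit'', and restricting to the quantum affine subalgebra attached to $I_j$ turns the configuration into a snake–type one inside that subalgebra; there, any failure of the nonobvious order to be highest-$\ell$-weight would manifest as a dominant monomial other than $\bs\pi$ in the product of the relevant $q$-characters. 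The first condition ($i_{j'}\in I_j$) forces such a monomial to come only from a resonance between the leaf factor and the other two, the second says $i$ and the leaf are in reducible position inside $I_j$, and the third — which is exactly the highest-$\ell$-weight condition for $V(\bs\omega_{i_j,a_j,r_j})^{*}\otimes V(\bs\omega_{i_{j'},a_{j'},r_{j'}})$ read through $\bs\omega_{k,c}^{*}=\bs\omega_{k^{*},c-\check h}$, whence the shift by $\check h_{I_j}$ and the automorphism $w_0^{I_j}$ — rules it out. The inequality \eqref{2exracondm} excludes the degenerate sub-case where the middle factor absorbs part of $\bs\omega$ (equivalently, where an extra dominant monomial sits at the endpoint of the relevant $\mathscr R$-interval), which I would handle by a second induction on $d(i_1,i_2)$, peeling off one Dynkin node at a time. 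The upshot is that $\bs\pi$ is the unique dominant monomial of the product, so $V(\bs\omega)\otimes V(\bs\pi')\cong V(\bs\pi)$.

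For the ``only if'' direction, suppose $V(\bs\pi)\cong V(\bs\pi_1)\otimes V(\bs\pi_2)$ nontrivially. By the structure of $q$-factorizations for three–vertex graphs (cf. \cite{ms:3tree}) the $q$-factors of $\bs\pi$ are distributed between $\bs\pi_1$ and $\bs\pi_2$, and because the line is alternating the only split of the three $q$-factors for which both pieces are genuine $q$-factorization graphs and both orders are highest-$\ell$-weight is to cut off a leaf; so $\{\bs\pi_1,\bs\pi_2\}=\{\bs\omega,\bs\pi'\}$ with $\bs\omega=\bs\omega_{i_j,a_j,r_j}$ for one $j$. Running the simplicity criterion backwards, both orders of $V(\bs\omega)\otimes V(\bs\pi')$ are highest-$\ell$-weight; translating the nonobvious one through the $I_j$-embedding and the duality as above returns the three membership conditions, and the bookkeeping about which dominant monomials can occur in the associated $q$-character product returns \eqref{2exracondm}.

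The hard part will be the fundamental-case analysis in the ``if'' direction: deciding, for a triple tensor product of fundamental type-$A$ modules whose graph is an alternating line, exactly when the non-linear order is still highest-$\ell$-weight. This is where $I_j$, $\check h_{I_j}$, $w_0^{I_j}$, and above all the boundary inequality \eqref{2exracondm} come from, and it does not seem to follow from a soft argument: one must control \emph{all} dominant monomials of the relevant $q$-character product, which I would organize as a double induction on $d(i_1,i_2)$ and on $r+r_1+r_2$ (the latter via the $T$-system), reducing at each step to two–vertex configurations whose modules are already prime and whose $q$-characters are the explicit snake-module ones.
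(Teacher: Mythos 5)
First, a point of comparison: this paper does not prove \Cref{t:3lineprime} at all. The statement is imported verbatim from \cite[Theorem 2.4.6]{ms:3tree} and closed with \qedsymbol, which in this paper's conventions marks a statement whose proof is omitted. So there is no in-paper argument to measure your attempt against; the relevant benchmark is the proof in \cite{ms:3tree}, which is carried out by exactly the kind of dominant-monomial and $q$-character analysis you only gesture at.

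Judged on its own terms, your text is a plan rather than a proof, and the gap sits precisely where the theorem's content lies. The soft framework is fine: simplicity of $V(\bs\omega)\otimes V(\bs\pi')$ is equivalent to both orderings being highest-$\ell$-weight (\Cref{p:vnvstar}), and one ordering is automatic from the arrow incident to the leaf via \eqref{e:krhwtp}, realness of KR modules, and \Cref{t:cyc}. But everything that makes the theorem nontrivial --- that the failure of the other ordering is governed by exactly the three membership conditions, with the specific subdiagram $I_j$, the shift $\check h_{I_j}$, the automorphism $w_0^{I_j}$, and that the boundary inequality \eqref{2exracondm} is the exact dividing line between prime and non-prime --- is deferred to a ``hard part'' you explicitly acknowledge you have not carried out: controlling \emph{all} dominant monomials of the relevant $q$-character product. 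In addition, your reduction from KR factors to fundamental factors asserts that the highest-$\ell$-weight property passes through the $T$-system resolutions, which is not automatic in the direction you need and is not argued. In the ``only if'' direction there is a further unproved step: you assume any nontrivial factorization $V(\bs\pi)\cong V(\bs\pi_1)\otimes V(\bs\pi_2)$ must distribute the three $q$-factors of $\bs\pi$ between the tensor factors, with the only admissible split cutting off a leaf; a priori the $q$-factors of $\bs\pi_1$ and $\bs\pi_2$ need not be $q$-factors of $\bs\pi$ (KR polynomials can fuse), so this requires the cut/subgraph results of \cite{ms:to,ms:3tree}, which you invoke only in passing and essentially circularly, since the reference you lean on is the very paper whose theorem you are trying to prove. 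As written, the proposal explains why the stated conditions are plausible, but it does not establish them.
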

	
	Note that \Cref{t:3lineprime} characterizes the  $q$-factorization graphs afforded by three-vertex alternating lines which are prime. Each such example is an example of a module whose prime factorization does not arise from studying maximal totally ordered subgraphs. For instance, this is the case for $\bs\pi = \bs\omega_{1,3}\,\bs\omega_{2,0}\,\bs\omega_{3,3}$.  
	
	In the case that $i_1=i_2=i$ in \Cref{t:3lineprime}, \Cref{l:moncto} implies the prime factorization given in the last line of this theorem must coincide with that described in \Cref{t:orddecss}. In the spirit of establishing the dictionary between the results of this paper and the previous literature, we dedicate the remainder of this section to make this checking explicit. 
	
	Let $a\in\mathbb Z$ be such that the middle vertex of $G$ is $\bs\omega_{i,a,r}$, and, without loss of generality, assume this vertex is a sink. In that case, the other vertices are
	\begin{equation*}
		\bs\omega_{i,a+m_j,r_j} \quad j\in\{1,2\}. 
	\end{equation*}
	Let also
	\begin{equation*}
		G_j = G_f(\bs\omega_{i,a,r}\,\bs\omega_{i,a+m_j,r_j})
	\end{equation*}
	and note 
	\begin{equation}
		r_j\le r_{j'} \quad\Rightarrow\quad G_{j'} \text{ is a maximal totally ordered subgraph of $G_f(\bs\pi)$.}
	\end{equation}
	In that case, \Cref{t:orddecss} implies 
	\begin{equation}
		V(\bs\pi)\cong V(\bs\omega_{i,a,r}\,\bs\omega_{i,a+m_{j'},r_{j'}})\otimes V(\bs\omega_{\bs\omega_{i,+m_j,r_j}})
	\end{equation}
	is the prime factorization. Note that, if $r_j=r_{j'}$, both $G_j$ and $G_{j'}$ give rise to mtos-decompositions. Let us check \Cref{t:3lineprime} agrees with this. Note we have
	\begin{equation*}
		m_j = r+r_j -2p_j \quad\text{with}\quad -d(i,\partial I)\le p_j<0, \quad I_j = [i+p_j,i-p_j],  \quad\text{and}\quad w_0^{I_j}(i_j)=i.
	\end{equation*}
	The fact that $p_j<0$ arises from the assumption we are working with the actual $q$-factorization. 
	In particular, the condition $i_{j'}\in I_j$ holds for both choices of $j$ and the condition $m_{j'}\in{\mathscr{R}_{i,i_{j'},I_j}^{r,r_{j'}}}$ is equivalent to $I_{j'}\subseteq I_j$, i.e., $p_j\le p_{j'}$, which certainly holds for at least one choice of $j$. Thus, choose such $j$. The third condition in the theorem can be rephrased as
	\begin{equation}\label{e:3rdcnp}
		r_j+r_{j'} - 2(r_j +p_{j'}-1)\in\mathscr R_{i,i,I_j}^{r_j,r_{j'}},
	\end{equation}
	while \cite[(4.3.17)]{ms:3tree} says \eqref{2exracondm} is equivalent to $r_j\le r_{j'}$, which implies \eqref{e:3rdcnp} in this case. 
	Thus, in order to check \Cref{t:3lineprime} agrees with  \Cref{t:orddecss}, it remains to check that the assumption $|m_j-m_{j'}|\notin\mathscr R_{i,i}^{r_j,r_j'}$, together with the above choice of $j$, implies $r_j\le r_{j'}$. 
	
	For doing this, begin by noting that the choice of $j$ by itself implies $m_{j'}-m_j \notin\mathscr R_{i,i}^{r_j,r_j'}$. Indeed,
	\begin{equation*}
		m_{j'}-m_j = r_j+r_{j'} - 2(p_{j'}-p_j+r_j)
	\end{equation*}
	and, since $p_j\le p_{j'}$, we have $p_{j'}-p_j+r_j\ge r_j$. Thus, it remains to check 
	\begin{equation*}
		m_j-m_{j'}\notin\mathscr R_{i,i}^{r_j,r_j'} \quad\Rightarrow\quad r_j\le r_{j'}.
	\end{equation*}
	Begin by noting that 
	\begin{equation*}
		m_j-m_{j'} = r_j+r_{j'} - 2(p_j-p_{j'} + r_{j'})
	\end{equation*}
	and
	\begin{equation*}
		p_{j'}-p_j = p_{j'} + |p_j| < d(i,\partial I) 
	\end{equation*}
	since $p_{j'}<0$ and $|p_j| = -p_j \le d(i,\partial I)$. Therefore,
	\begin{equation*}
		p_j-p_{j'}+r_{j'} > p_j-p_{j'} > - d(i,\partial I). 
	\end{equation*}
	Moreover, since $p_j\le p_{j'}$, it follows that 
	\begin{equation*}
		m_j-m_{j'}\notin\mathscr R_{i,i}^{r_j,r_j'} \quad\Leftrightarrow\quad p_j-p_{j'}+r_{j'}\ge r_j.
	\end{equation*}
	But the latter condition coincides with \eqref{2exracondm} since $i_1=i_2$, thus completing the checking.

	\section{The Proofs}\label{s:pfs}
	
	\subsection{Collected Technical Results}
	In this section, for the readers convenience, we collect a few technical results we shall need from the existing literature.
	
	\begin{lem}[{\cite[Lemma 5.1.2]{ms:to}}]\label{l:lineps}
		Let $N\in\mathbb Z_{> 0}$ and $(m_k,r_k,i_k)\in \mathbb Z_{\ge 0}\times \mathbb Z_{> 0}\times I, 1\le k\le N$. Suppose 
		\begin{equation}\label{e:lineps}
			|m_k-m_{k-1}|\in\mathscr{R}^{r_{k-1},r_k}_{i_{k-1},i_k} \qquad\text{for all}\qquad 1< k\leq N.
		\end{equation}
		\begin{enumerate}[(a)]
			\item For all $1\le k,l\le N$, there exists $p_{l,k}\in\mathbb Z$ such that $m_l-m_k=r_l+r_k+d(i_l,i_k)-2p_{l,k}$. 
			\item If $m_k>m_{k-1}$ for all $1< k\leq N$, then $p_{N,1}<\operatorname{min}\{r_1,r_N\}$, and 
			\begin{equation*}
				p_{N,1}<p_{l,k}<\operatorname{min}\{r_k,r_l\},\quad\textrm{for all}\quad 1\leq k<l\leq N,\quad\textrm{with}\quad (k,l)\neq(1,N).
			\end{equation*}
			Similarly, if  $m_k<m_{k-1}$ for all $1< k\leq N$, then $p_{1,N}<\operatorname{min}\{r_1,r_N\}$, and 
			\begin{equation*}
				p_{1,N}<p_{k,l}<\operatorname{min}\{r_k,r_l\},\quad\textrm{for all}\quad 1\leq k<l\leq N,\quad\textrm{with}\quad (k,l)\neq(1,N).
			\end{equation*}
		\end{enumerate}\qed
	\end{lem}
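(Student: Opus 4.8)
The plan is to obtain part~(a) from a parity computation and part~(b) from an induction on $N$ fueled by two three-term telescoping identities for the numbers $p_{l,k}$. For part~(a), the key point is that, directly from \eqref{e:defredset'}, every element of $\mathscr R_{i,j}^{r,s}$ is congruent to $r+s+d(i,j)$ modulo $2$, so \eqref{e:lineps} forces $m_k-m_{k-1}\equiv r_{k-1}+r_k+d(i_{k-1},i_k)\pmod 2$. Identifying $I$ with $\{1,\dots,n\}$ so that $d(i,j)=|i-j|$ and telescoping over $k<j\le l$, I would compute
\begin{equation*}
	m_l-m_k\ \equiv\ \sum_{j=k+1}^l(r_{j-1}+r_j)+\sum_{j=k+1}^l|i_j-i_{j-1}|\ \equiv\ (r_k+r_l)+(i_l-i_k)\ \equiv\ r_l+r_k+d(i_l,i_k)\pmod 2,
\end{equation*}
using $|x|\equiv x\pmod 2$ and $\sum_{j=k+1}^l(i_j-i_{j-1})=i_l-i_k$. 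Hence $p_{l,k}:=\tfrac12\big(r_l+r_k+d(i_l,i_k)-(m_l-m_k)\big)$ is an integer for every ordered pair $(k,l)$ (with $p_{k,k}=r_k$ and $p_{l,k}+p_{k,l}=r_l+r_k+d(i_l,i_k)$), which is part~(a).

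For part~(b) I would first reduce to the case in which $m$ is strictly increasing: if $m$ is strictly decreasing, applying the increasing case to the reversed data $(m_{N+1-k},r_{N+1-k},i_{N+1-k})_{1\le k\le N}$ — whose hypotheses follow from \eqref{e:lineps} by the symmetry $\mathscr R_{i,j}^{r,s}=\mathscr R_{j,i}^{s,r}$ — and undoing the substitution $k\mapsto N+1-k$ recovers exactly the asserted inequalities for the $p_{k,l}$. So assume $m_k>m_{k-1}$ for all $1<k\le N$ and induct on $N\ge 2$; the base $N=2$ is the hypothesis $p_{2,1}<\min\{r_1,r_2\}$, there being no pairs $(k,l)\ne(1,N)$. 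In the inductive step I would apply the inductive hypothesis to the two length-$(N-1)$ subsequences indexed by $\{1,\dots,N-1\}$ and by $\{2,\dots,N\}$ (the relevant $p_{l,k}$ being the same whether computed inside a subsequence or inside the whole sequence). A short bookkeeping check shows their combined conclusions already give $p_{l,k}<\min\{r_k,r_l\}$ for every $1\le k<l\le N$ with $(k,l)\ne(1,N)$: any such pair with $l\le N-1$ lies in the range of the first sub-instance and any such pair with $l=N$ (hence $k\ge 2$) lies in that of the second, while the two pairs $(1,N-1)$ and $(2,N)$ excluded from the ``interior'' clauses are precisely the ones governed by the ``extreme'' clauses $p_{N-1,1}<\min\{r_1,r_{N-1}\}$ and $p_{N,2}<\min\{r_2,r_N\}$.

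It remains to bound $p_{N,1}$. Writing $m_N-m_1=(m_N-m_{N-1})+(m_{N-1}-m_1)$ and subtracting the defining relations of $p_{N,N-1}$ and $p_{N-1,1}$, and then repeating with the splitting index $2$ in place of $N-1$, yields
\begin{equation*}
	p_{N,1}=\tfrac12\big(d(i_1,i_N)-d(i_{N-1},i_N)-d(i_1,i_{N-1})\big)+(p_{N,N-1}-r_{N-1})+p_{N-1,1}
\end{equation*}
and
\begin{equation*}
	p_{N,1}=\tfrac12\big(d(i_1,i_N)-d(i_1,i_2)-d(i_2,i_N)\big)+(p_{2,1}-r_2)+p_{N,2}.
\end{equation*}
The parenthesized halves are $\le 0$ by the triangle inequality for $d$, while $p_{N,N-1}-r_{N-1}<0$ and $p_{2,1}-r_2<0$ by \eqref{e:lineps}; hence $p_{N,1}<p_{N-1,1}$ and $p_{N,1}<p_{N,2}$. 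Combined with $p_{N-1,1}<\min\{r_1,r_{N-1}\}\le r_1$ and $p_{N,2}<\min\{r_2,r_N\}\le r_N$ this gives $p_{N,1}<\min\{r_1,r_N\}$, and combined with the sub-instance inequalities $p_{N-1,1}<p_{l,k}$ and $p_{N,2}<p_{l,k}$ (and the equalities $p_{l,k}=p_{N-1,1}$ for $(k,l)=(1,N-1)$, $p_{l,k}=p_{N,2}$ for $(k,l)=(2,N)$) it gives $p_{N,1}<p_{l,k}$ for all $1\le k<l\le N$ with $(k,l)\ne(1,N)$, closing the induction. I expect the only real obstacle to be the bookkeeping in the inductive step — making sure every pair $(k,l)\ne(1,N)$ is handled by exactly one of the two sub-instances and that the three-term identities are assembled with the correct signs; the analytic inputs (the parity congruence, the triangle inequality for $d$, and the strict upper bounds $p_{k,k-1}<\min\{r_{k-1},r_k\}$ from \eqref{e:lineps}) are elementary.
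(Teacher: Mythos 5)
Your argument is correct, and it is essentially self-contained: note that the paper you are reading does not prove this lemma at all — it is quoted verbatim from \cite[Lemma 5.1.2]{ms:to} as a collected technical result — so the relevant check is internal consistency, which your proposal passes. Part (a) is indeed just the parity statement, and your telescoping computation (using that every element of $\mathscr R_{i,j}^{r,s}$ is $\equiv r+s+d(i,j)\pmod 2$, that $|x|\equiv x\pmod 2$, and that $d(i,j)=|i-j|$ in type $A$) is valid. For part (b), I verified the two splitting identities for $p_{N,1}$ (the signs are right: the triangle-inequality term is $\le 0$ and $p_{N,N-1}-r_{N-1},\,p_{2,1}-r_2\le -1$ by \eqref{e:lineps}, giving the strict inequalities $p_{N,1}<p_{N-1,1}$ and $p_{N,1}<p_{N,2}$), the bookkeeping in the induction step does cover every pair $(k,l)\ne(1,N)$ exactly as you describe (pairs with $l\le N-1$ via the first sub-instance, pairs with $l=N$, $k\ge 2$ via the second, with $(1,N-1)$ and $(2,N)$ handled by the extreme clauses), and the reduction of the decreasing case by reversing the data uses the genuine symmetry $\mathscr R_{i,j}^{r,s}=\mathscr R_{j,i}^{s,r}$ and correctly converts $p'_{l,k}$ into $p_{N+1-l,N+1-k}$, recovering the stated inequalities for $p_{1,N}$ and $p_{k,l}$. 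This inductive/telescoping strategy is in the same spirit as the original proof in \cite{ms:to}, so there is nothing to flag beyond noting that your write-up is complete as it stands.
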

	
	\begin{lem}[{\cite[Lemma 5.1.3]{ms:to}}]\label{l:positiveppath}
		Assume $m_k>m_{k-1}$ for all $1< k\leq N$ in \Cref{l:lineps}. 
		\begin{enumerate}[(a)]
			\item If $p_{N,1}\ge -d([i_k,i_l],\partial I)-1$ for some  $1\leq k<l\leq N, (l,k)\ne (1,N)$, then $m_l-m_k\in\mathscr R^{r_k,r_l}_{i_k,i_l}$. In particular, this is the case if $m_N-m_1\in\mathscr R^{r_1,r_N}_{i_1,i_N}$ and $d([i_k,i_l],\partial I)\ge d([i_1,i_N],\partial I)$.
			\item If $m_N-m_1\in\mathscr R^{r_1,r_N}_{i_1,i_N,[i_1,i_N]}$, then $m_l-m_k\in\mathscr R^{r_k,r_l}_{i_k,i_l,[i_k,i_l]}$	for all $1\leq k<l\leq N$.\qed
		\end{enumerate}
	\end{lem}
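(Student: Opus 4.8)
The plan is to reduce everything to size estimates on the single integer $p_{l,k}$ furnished by \Cref{l:lineps}(a). Writing, as there, $m_l-m_k=r_l+r_k+d(i_l,i_k)-2p_{l,k}$, the definition \eqref{e:defredset'} says precisely that $m_l-m_k\in\mathscr R^{r_k,r_l}_{i_k,i_l}$ iff $-d([i_k,i_l],\partial I)\le p_{l,k}<\min\{r_k,r_l\}$, and likewise $m_l-m_k\in\mathscr R^{r_k,r_l}_{i_k,i_l,[i_k,i_l]}$ iff $0\le p_{l,k}<\min\{r_k,r_l\}$, where I have used that $\partial[i_k,i_l]\subseteq[i_k,i_l]$ forces $d([i_k,i_l],\partial[i_k,i_l])=0$. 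Since we are in the strictly increasing case, \Cref{l:lineps}(b) already delivers the upper bound $p_{l,k}<\min\{r_k,r_l\}$ for every pair $k<l$, so in all parts I only need to produce the relevant lower bound on $p_{l,k}$.

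For (a): given $k<l$ with $(l,k)\ne(1,N)$ and $p_{N,1}\ge -d([i_k,i_l],\partial I)-1$, I would invoke the strict inequality $p_{N,1}<p_{l,k}$ from \Cref{l:lineps}(b) to get $p_{l,k}\ge p_{N,1}+1\ge -d([i_k,i_l],\partial I)$, which together with the automatic upper bound yields $m_l-m_k\in\mathscr R^{r_k,r_l}_{i_k,i_l}$. For the ``in particular'' clause I would note that the assumption $m_N-m_1\in\mathscr R^{r_1,r_N}_{i_1,i_N}$ unwinds (again using that the upper bound is free) to $p_{N,1}\ge -d([i_1,i_N],\partial I)$; combined with $d([i_k,i_l],\partial I)\ge d([i_1,i_N],\partial I)$ this gives $p_{N,1}\ge -d([i_k,i_l],\partial I)$, which is even stronger than the hypothesis just used, while the pair $(1,N)$ is itself the assumption.

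For (b): the hypothesis $m_N-m_1\in\mathscr R^{r_1,r_N}_{i_1,i_N,[i_1,i_N]}$ unwinds to $p_{N,1}\ge 0$. For any pair $(k,l)\ne(1,N)$ with $k<l$, \Cref{l:lineps}(b) gives $p_{l,k}>p_{N,1}\ge 0$, hence $p_{l,k}\ge 0$; and for $(k,l)=(1,N)$ the bound $p_{N,1}\ge 0$ is the hypothesis. Either way $0\le p_{l,k}<\min\{r_k,r_l\}$, i.e. $m_l-m_k\in\mathscr R^{r_k,r_l}_{i_k,i_l,[i_k,i_l]}$.

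I do not expect a serious obstacle: essentially all the work is hidden in \Cref{l:lineps}(b), and what remains is bookkeeping. The points that need a little care are (i) recognizing that membership in a reduced set is a two-sided bound on $p_{l,k}$, so the whole statement becomes an inequality-chasing exercise; (ii) the computation $d([i_k,i_l],\partial[i_k,i_l])=0$, which is exactly what identifies the ``relative'' reduced set with the condition $p_{l,k}\ge 0$; and (iii) handling the excluded pair $(k,l)=(1,N)$ by hand, since \Cref{l:lineps}(b) states its chain of strict inequalities only for $(k,l)\ne(1,N)$.
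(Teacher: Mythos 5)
Your proof is correct. Note that the paper offers no argument for this statement at all—it is quoted verbatim from \cite[Lemma 5.1.3]{ms:to} with the proof omitted—but your derivation is the natural one and is complete: since $p_{l,k}$ is uniquely determined by $m_l-m_k=r_l+r_k+d(i_l,i_k)-2p_{l,k}$, membership in $\mathscr R^{r_k,r_l}_{i_k,i_l}$ (resp.\ $\mathscr R^{r_k,r_l}_{i_k,i_l,[i_k,i_l]}$) is precisely the two-sided bound $-d([i_k,i_l],\partial I)\le p_{l,k}<\min\{r_k,r_l\}$ (resp.\ $0\le p_{l,k}<\min\{r_k,r_l\}$, using $d([i_k,i_l],\partial[i_k,i_l])=0$), the upper bound is supplied by \Cref{l:lineps}(b), and the lower bound follows from the strict inequality $p_{N,1}<p_{l,k}$ together with integrality of the $p$'s (\Cref{l:lineps}(a)), with the excluded pair $(k,l)=(1,N)$ being the hypothesis itself. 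The only cosmetic quibble is the aside that the hypotheses ``unwind using that the upper bound is free'': unwinding membership gives both bounds regardless, and you only need the lower one.
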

	
	We now collect a few known facts about tensor products of highest-$\ell$-weight modules. 
	The following are well-known.
	
	\begin{prop}\label{p:sinter}
		Let $\bs{\pi}, \bs{\varpi}\in\mathcal{P}^+$. Then, $V(\bs{\pi})\otimes V(\bs{\varpi})$ is simple if and only if $V(\bs{\varpi})\otimes V(\bs{\pi})$ is simple and, in that case, $V(\bs{\pi})\otimes V(\bs{\varpi}) \cong V(\bs\pi\bs\varpi)\cong V(\bs{\varpi})\otimes V(\bs{\pi})$. 	\hfil\qed
	\end{prop}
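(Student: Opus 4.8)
The plan is to reduce the statement to two standard facts: that $V(\bs\pi\bs\varpi)$ always appears as a composition factor of $V(\bs\pi)\otimes V(\bs\varpi)$, and that the Grothendieck ring $K_0(\mathcal C)$ is commutative. Let me first recall why the former holds. If $v_{\bs\pi}^+$ and $v_{\bs\varpi}^+$ denote highest-$\ell$-weight vectors of $V(\bs\pi)$ and $V(\bs\varpi)$, then $\wt(\bs\pi)+\wt(\bs\varpi)$ is the unique maximal element of $\wt(V(\bs\pi)\otimes V(\bs\varpi))$ for the usual partial order on $P$, so $v_{\bs\pi}^+\otimes v_{\bs\varpi}^+$ is annihilated by every raising generator $x_{i,r}^+$; by the upper-triangular behaviour of the coproduct on the Drinfeld--Cartan generators it is moreover an $\ell$-weight vector of $\ell$-weight $\bs\pi\bs\varpi$. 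Hence the submodule it generates is a highest-$\ell$-weight module of highest $\ell$-weight $\bs\pi\bs\varpi$ and therefore has $V(\bs\pi\bs\varpi)$ as its irreducible head; in particular $V(\bs\pi\bs\varpi)$ is a composition factor of $V(\bs\pi)\otimes V(\bs\varpi)$, and, by the same argument with the two factors interchanged, of $V(\bs\varpi)\otimes V(\bs\pi)$ as well.

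I would then invoke the commutativity of $K_0(\mathcal C)$ --- which is classical, following for instance from the injectivity of the $q$-character homomorphism onto a commutative polynomial ring --- to get the identity $[V(\bs\pi)\otimes V(\bs\varpi)] = [V(\bs\varpi)\otimes V(\bs\pi)]$ in $K_0(\mathcal C)$. Now suppose $V(\bs\pi)\otimes V(\bs\varpi)$ is simple. Since it has $V(\bs\pi\bs\varpi)$ among its composition factors and is itself simple, it is isomorphic to $V(\bs\pi\bs\varpi)$, so $[V(\bs\varpi)\otimes V(\bs\pi)] = [V(\bs\pi)\otimes V(\bs\varpi)] = [V(\bs\pi\bs\varpi)]$; as the right-hand side is the class of a simple module and $K_0(\mathcal C)$ is free on isoclasses of simples, $V(\bs\varpi)\otimes V(\bs\pi)$ has length one, i.e., is simple. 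Swapping the roles of $\bs\pi$ and $\bs\varpi$ gives the converse implication. (Alternatively one could argue directly with the normalized $R$-matrix $V(\bs\pi)\otimes V(\bs\varpi)\to V(\bs\varpi)\otimes V(\bs\pi)$, which restricts to the identity on highest-$\ell$-weight vectors and is an isomorphism exactly when either --- hence both --- of the two tensor products is simple; but the $K_0$ argument is shorter to cite.)

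Finally, for the displayed isomorphisms, assume $V(\bs\pi)\otimes V(\bs\varpi)$ is simple. The submodule generated by the nonzero vector $v_{\bs\pi}^+\otimes v_{\bs\varpi}^+$ is then the whole module, so $V(\bs\pi)\otimes V(\bs\varpi)$ is a simple highest-$\ell$-weight module of highest $\ell$-weight $\bs\pi\bs\varpi$, whence $V(\bs\pi)\otimes V(\bs\varpi)\cong V(\bs\pi\bs\varpi)$; the identical argument applied to $V(\bs\varpi)\otimes V(\bs\pi)$ yields $V(\bs\varpi)\otimes V(\bs\pi)\cong V(\bs\pi\bs\varpi)$. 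The only step requiring genuine care is the $\ell$-weight bookkeeping in the first paragraph --- that $v_{\bs\pi}^+\otimes v_{\bs\varpi}^+$ is an $\ell$-weight vector with $\ell$-weight precisely $\bs\pi\bs\varpi$ and that this $\ell$-weight is maximal --- but this is part of the standard toolkit underlying the setup (cf. \cite{ms:to}); the remainder is formal manipulation in $K_0(\mathcal C)$.
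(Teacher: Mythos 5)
Your argument is correct, and it is the standard proof of this statement: the paper records \Cref{p:sinter} as well known and omits the proof entirely, so there is no authorial argument to compare against. Your two ingredients --- that $v^+_{\bs\pi}\otimes v^+_{\bs\varpi}$ spans the top weight space, is killed by all $x_{i,r}^+$ for weight reasons, and is an $\ell$-weight vector of $\ell$-weight $\bs\pi\bs\varpi$, so that $V(\bs\pi\bs\varpi)$ is always a composition factor of either ordering of the tensor product; and the commutativity of the Grothendieck ring via the injective $q$-character homomorphism --- are exactly the facts this kind of statement is usually deduced from, and the bookkeeping you do with them (length-one comparison in $K_0$, then identifying the simple highest-$\ell$-weight module) is sound.
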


	\begin{prop}\label{p:vnvstar}
		Let $\bs{\pi}, \bs\varpi\in\mathcal{P}^+$. Then, $V(\bs{\pi})\otimes V(\bs\varpi)$ is simple if and only if both $V(\bs{\pi})\otimes V(\bs\varpi)$ and $V(\bs\varpi)\otimes V(\bs{\pi})$ are highest-$\ell$-weight. 	\hfil\qed
	\end{prop}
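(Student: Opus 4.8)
The plan is to prove the two implications separately, the forward one being essentially a reformulation of a definition and the reverse one carrying all the content.

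\emph{($\Rightarrow$)} Every simple object of $\mathcal C$ is generated by a highest-$\ell$-weight vector, hence is highest-$\ell$-weight. Thus, if $V(\bs\pi)\otimes V(\bs\varpi)$ is simple it is highest-$\ell$-weight, and by \Cref{p:sinter} so is $V(\bs\varpi)\otimes V(\bs\pi)$.

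\emph{($\Leftarrow$)} For the converse I would first record three standard facts. (a) If $M\in\mathcal C$ is highest-$\ell$-weight with highest $\ell$-weight $\bs\lambda$, then $M_{\bs\lambda}$ is one-dimensional, spanned by a cyclic vector, so every proper submodule avoids $M_{\bs\lambda}$; hence the sum of all proper submodules is proper, $M$ has a unique maximal submodule, and its head is $V(\bs\lambda)$. In particular $M$ surjects onto $V(\bs\lambda)$. (b) For $N\in\mathcal C$ and $\bs\sigma\in\mathcal P^+$ one has $[N:V(\bs\sigma)]\le\dim N_{\bs\sigma}$, since $\ell$-weight multiplicities are additive along short exact sequences and $\dim V(\bs\sigma)_{\bs\sigma}=1$; applied to $N=V(\bs\pi)\otimes V(\bs\varpi)$ and $\bs\sigma=\bs\pi\bs\varpi$, whose $\ell$-weight space equals $V(\bs\pi)_{\bs\pi}\otimes V(\bs\varpi)_{\bs\varpi}$ and is therefore one-dimensional, this gives $[V(\bs\pi)\otimes V(\bs\varpi):V(\bs\pi\bs\varpi)]=1$ under our hypothesis (the lower bound coming from (a)). (c) The automorphism $\bs\pi\mapsto\bs\pi^*$ of $\mathcal P$ is induced by a Hopf algebra automorphism $\Psi$ of $U_q(\tlie g)$, namely the composition of the loop rotation rescaling $u$ by $q^{-\check h}$ with the automorphism of $U_q(\tlie g)$ induced by the diagram automorphism $i\mapsto i^*$; being a coalgebra map, $\Psi$ satisfies $V(\bs\pi)^\Psi\cong V(\bs\pi^*)$ and $(V_1\otimes V_2)^\Psi\cong V_1^\Psi\otimes V_2^\Psi$, so $V(\bs\pi)\otimes V(\bs\varpi)$ is highest-$\ell$-weight if and only if $V(\bs\pi^*)\otimes V(\bs\varpi^*)$ is.

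Now suppose both $V(\bs\pi)\otimes V(\bs\varpi)$ and $V(\bs\varpi)\otimes V(\bs\pi)$ are highest-$\ell$-weight, and write $M=V(\bs\pi)\otimes V(\bs\varpi)$. By (a), $M$ has simple head $V(\bs\pi\bs\varpi)$. By (c) applied to the hypothesis on $V(\bs\varpi)\otimes V(\bs\pi)$, the module $V(\bs\varpi^*)\otimes V(\bs\pi^*)$ is highest-$\ell$-weight; since $(V_1\otimes V_2)^*\cong V_2^*\otimes V_1^*$ and $V(\bs\chi)^*\cong V(\bs\chi^*)$, this module is precisely $M^*$. By (a), $M^*$ surjects onto $V$ of the product of the highest $\ell$-weights of its two factors, i.e. there is an epimorphism $M^*\twoheadrightarrow V(\bs\varpi^*\bs\pi^*)=V((\bs\pi\bs\varpi)^*)$. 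Applying the contravariant functor ${}^*(-)$ and using ${}^*(M^*)\cong M$, ${}^*V(\bs\chi)\cong V({}^*\bs\chi)$, and the fact that $\bs\chi\mapsto{}^*\bs\chi$ and $\bs\chi\mapsto\bs\chi^*$ are mutually inverse automorphisms of $\mathcal P$, we obtain a monomorphism $V(\bs\pi\bs\varpi)\hookrightarrow M$. Thus $V(\bs\pi\bs\varpi)$ occurs both in the head of $M$ and as a submodule of $M$, while $[M:V(\bs\pi\bs\varpi)]=1$ by (b). If $M$ were not simple, its unique maximal submodule $N$ would be nonzero; the submodule of $M$ isomorphic to $V(\bs\pi\bs\varpi)$ produced above is proper (else $M$ would be simple), hence contained in $N$, so $[M:V(\bs\pi\bs\varpi)]=[N:V(\bs\pi\bs\varpi)]+[M/N:V(\bs\pi\bs\varpi)]\ge 1+1$, contradicting $[M:V(\bs\pi\bs\varpi)]=1$. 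Hence $M$ is simple, and $M\cong V(\bs\pi\bs\varpi)$ by \Cref{p:sinter}.

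The step I expect to be the real obstacle — and the only place where the type-$A$ hypothesis genuinely enters — is fact (c): that $\bs\pi\mapsto\bs\pi^*$ is realized by an honest Hopf, not merely algebra, automorphism, so that it commutes with the formation of tensor products. This rests on $i\mapsto i^*$ extending to a symmetry of the affine Dynkin diagram fixing the affine node (which holds in type $A$) together with the elementary fact that loop rotations are Hopf automorphisms; everything else is formal manipulation with highest-$\ell$-weight modules and the two rigid-duality functors. An alternative route would deduce the socle statement from a normalized $R$-matrix $V(\bs\pi)\otimes V(\bs\varpi)\to V(\bs\varpi)\otimes V(\bs\pi)$, but verifying that its specialization is nonzero on the highest-$\ell$-weight line requires essentially the same input.
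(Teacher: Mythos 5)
The paper offers no proof of \Cref{p:vnvstar}: it is recorded as a well-known fact (the proof is omitted), so there is nothing internal to compare against. Your argument is correct and is essentially the standard proof from the literature: the first hypothesis gives $M=V(\bs\pi)\otimes V(\bs\varpi)$ a simple head $V(\bs\pi\bs\varpi)$; transporting the second hypothesis through the Hopf automorphism realizing $\bs\chi\mapsto\bs\chi^*$ and dualizing produces a copy of $V(\bs\pi\bs\varpi)$ inside $M$; and the count $\dim M_{\bs\pi\bs\varpi}=1$ (which, as you should note, rests on the fact that the ordinary weight space $V(\bs\pi)_{\wt\bs\pi}$ is one-dimensional, so $\bs\mu\bs\nu=\bs\pi\bs\varpi$ with $\bs\mu\in\wt_\ell V(\bs\pi)$, $\bs\nu\in\wt_\ell V(\bs\varpi)$ forces $\bs\mu=\bs\pi$, $\bs\nu=\bs\varpi$) forces simplicity. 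Your fact (c) is indeed the only nonformal ingredient and is standard: the spectral shift and the diagram automorphism $i\mapsto i^*$ (an affine diagram symmetry fixing the affine node) are Hopf automorphisms, which is exactly how the identity $V(\bs\pi)^*\cong V(\bs\pi^*)$ used by the paper is usually established, so your proof is consistent with the conventions in \Cref{ss:basenot}.
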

	
	As in \cite{ms:to,ms:3tree,bms}, the following theorem plays a crucial role in the main proofs of this paper. For comments on its proof, see \cite[Remark 4.1.7]{ms:to}.
	
	\begin{thm}\label{t:cyc}
		Let $S_1,\cdots, S_m\in\mathcal C$ be simple and assume $S_i$ is real either for all $i>2$ or for all $i<m-1$. If $S_i\otimes S_j$ is highest-$\ell$-weight for all $1\leq i< j\leq m$, then $S_1\otimes\cdots\otimes S_m$ is highest-$\ell$-weight. 	\hfil\qed
	\end{thm}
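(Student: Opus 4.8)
The plan is to argue by induction on $m$, the cases $m\le 2$ being immediate (for $m=1$ a simple module is by definition highest-$\ell$-weight, and for $m=2$ the conclusion is the hypothesis). For the inductive step the two alternative hypotheses are handled by mirror-image arguments: if $S_i$ is real for all $i>2$ one peels $S_m$ off the right end, while if $S_i$ is real for all $i<m-1$ one peels $S_1$ off the left end; so suppose the former. The sublist $S_1,\dots,S_{m-1}$ still satisfies the pairwise highest-$\ell$-weight condition and still has $S_i$ real for $i>2$, so by the induction hypothesis $W:=S_1\otimes\cdots\otimes S_{m-1}$ is highest-$\ell$-weight, with a generating vector $v=v_1\otimes\cdots\otimes v_{m-1}$ of highest $\ell$-weight $\bs\pi_W=\bs\pi_1\cdots\bs\pi_{m-1}$; moreover $S_j\otimes S_m$ is highest-$\ell$-weight for every $j<m$ and $S_m$ is real. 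Thus everything reduces to the following \emph{appending lemma}: if $S$ is real and $B_1,\dots,B_k$ are simple with $B_1\otimes\cdots\otimes B_k$ highest-$\ell$-weight and $B_j\otimes S$ highest-$\ell$-weight for all $j$, then $B_1\otimes\cdots\otimes B_k\otimes S$ is highest-$\ell$-weight. Applying this with $(B_1,\dots,B_k,S)=(S_1,\dots,S_{m-1},S_m)$ closes the induction; its mirror form, prepending a real module on the left, disposes of the second case (the sublist $S_2,\dots,S_m$ inheriting, after reindexing, the hypothesis ``$S_i$ real for $i<(\text{length})-1$'').

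I expect the appending lemma to be the genuine obstacle. It is \emph{not} a formal consequence of \Cref{p:sinter} and \Cref{p:vnvstar}: the highest-$\ell$-weightness of $W\otimes S_m$ cannot be read off from the simple quotient $V(\bs\pi_W)\otimes S_m$, because the kernel of $W\otimes S_m\twoheadrightarrow V(\bs\pi_W)\otimes S_m$ need not be contained in the submodule generated by $v\otimes v_{S_m}$; one must instead control that submodule inside the genuinely non-simple module $W\otimes S_m$. This is precisely the situation for which the theory of normalized $R$-matrices is designed, and the role of realness is to make an iterated $R$-matrix construction behave.

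Concretely, for simple modules $X,Y$ there is a normalized intertwiner $\mathbf r_{X,Y}\colon X\otimes Y\to Y\otimes X$ carrying highest-$\ell$-weight vector to highest-$\ell$-weight vector, and the hypothesis that $Y\otimes X$ is highest-$\ell$-weight is encoded — through the composition-factor structure of two-fold tensor products and the order of vanishing of the denominator of the rational $R$-matrix at the relevant spectral parameter — in the size of the image of $\mathbf r_{X,Y}$. The realness of $S$, which by definition makes $S\otimes S$ simple and hence $\mathbf r_{S,S}$ invertible, is exactly what forces the cabled intertwiner $S\otimes(B_1\otimes\cdots\otimes B_k)\to(B_1\otimes\cdots\otimes B_k)\otimes S$, obtained by sliding $S$ past $B_k,\dots,B_1$ in turn, to remain well defined and non-degenerate, so that its image is the submodule generated by $(v_1\otimes\cdots\otimes v_k)\otimes v_S$; combined with $B_1\otimes\cdots\otimes B_k$ being highest-$\ell$-weight, that image must then be all of $B_1\otimes\cdots\otimes B_k\otimes S$. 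The delicate points — defining the normalized $R$-matrix through the non-simple module $B_1\otimes\cdots\otimes B_k$ and keeping track of the cabling identities up to the scalars that realness prevents from vanishing — are the technical core; this is exactly the cyclicity input discussed in \cite[Remark 4.1.7]{ms:to} and carried out in the references cited there, and for the purposes of the present paper it is this $R$-matrix argument that underlies \Cref{t:cyc}.
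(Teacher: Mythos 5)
The first thing to say is that the paper itself offers no proof of \Cref{t:cyc}: it is quoted as a known result, with the proof omitted and a pointer to \cite[Remark 4.1.7]{ms:to} and the references discussed there. So your proposal cannot be matched against an internal argument; it has to stand on its own, and it does not. The inductive scaffolding is fine (the bookkeeping of which factors remain real when you peel $S_m$ off the right, or $S_1$ off the left, is correct), but it only reduces the theorem to your ``appending lemma'', and that lemma is not a smaller statement: already for $k=2$ it is essentially the $m=3$ case of the theorem with one real factor, and in general it carries the entire content. Deferring it to ``the R-matrix technology carried out in the references'' is exactly what the paper does by citation, so nothing is gained unless the appending step is actually proved.

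Moreover, the mechanism you sketch for that step is incorrect at the decisive point. You claim that realness of $S$ makes the cabled intertwiner $S\otimes(B_1\otimes\cdots\otimes B_k)\to(B_1\otimes\cdots\otimes B_k)\otimes S$ nondegenerate with image equal to the submodule generated by the tensor product of highest-$\ell$-weight vectors, and then conclude that this image ``must be all of'' the target because $B_1\otimes\cdots\otimes B_k$ is highest-$\ell$-weight. Given your own description of the image, surjectivity of that map \emph{is} the cyclicity statement to be proved, so the inference is circular; and in general the image of a normalized R-matrix is a proper submodule precisely when the relevant tensor product is reducible (this is the content of the simple-head/simple-socle theorems you are implicitly invoking), so realness of the single appended factor cannot force surjectivity by itself. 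Invertibility of $\mathbf r_{S,S}$ says nothing direct about $\mathbf r_{B_j,S}$, nor about how the cabling behaves through the non-simple module $B_1\otimes\cdots\otimes B_k$; the arguments behind \cite[Remark 4.1.7]{ms:to} use finer input (analysis of poles and zeros of R-matrices, or dominant-monomial/$q$-character arguments), and they use the pairwise hypotheses that each $B_j\otimes S$ is highest-$\ell$-weight in an essential, not decorative, way. As written, the proposal wraps the citation the paper already makes inside an induction, with a flawed argument placed exactly where the mathematics lives.
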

	
	\begin{cor}\label{c:cyc}
		Let $S_1,\cdots, S_m\in\mathcal C$ be simple and assume $S_i$ is real either for all $i>2$ or for all $i<m-1$. Then, $S_1\otimes\cdots\otimes S_m$ is simple if, and only if, $S_i\otimes S_j$ is simple for all $1\leq i< j\leq m$.  	\hfil\qed
	\end{cor}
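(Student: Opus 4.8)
The plan is to prove the two implications separately; only the reverse direction uses \Cref{t:cyc}, while the forward direction needs no reality hypothesis.

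For the direction ``$S_1\otimes\cdots\otimes S_m$ simple $\Rightarrow$ $S_i\otimes S_j$ simple for all $i<j$'', I would first record the elementary fact that a tensor factor of a simple module is simple: if $M\otimes N$ is simple and $0\ne M'\subsetneq M$ is a submodule, then $M'\otimes N$ is a nonzero proper submodule of $M\otimes N$, a contradiction. Next, using \Cref{p:sinter}, two adjacent simple tensor factors inside a product that is already known to be simple may be interchanged: indeed $S_l\otimes S_{l+1}$ is then simple, hence (by \Cref{p:sinter}) isomorphic to $S_{l+1}\otimes S_l$, so the reordered product is isomorphic to the original and is therefore simple. Composing such interchanges, every reordering of $S_1\otimes\cdots\otimes S_m$ is simple, and since $S_i\otimes S_j$ occurs as a tensor factor of a suitable reordering, it is simple.

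For the converse I would argue by induction on $m$, the cases $m\le 2$ being contained in the hypothesis. Assume $m\ge 3$ and that $S_i\otimes S_j$ is simple for all $i<j$; by \Cref{p:sinter} this holds for all $i\ne j$ and both orders, and in particular every such tensor product is highest-$\ell$-weight. Consider the case in which $S_i$ is real for all $i>2$ (the case ``$i<m-1$'' is entirely symmetric, with the two ends interchanged). Put $A=S_1\otimes S_2$ and $B=S_3\otimes\cdots\otimes S_m$; then $A$ is simple by hypothesis and $B$ is simple by the inductive hypothesis applied to $S_3,\dots,S_m$, all of whose members are real. Writing $A\cong V(\bs\alpha)$ and $B\cong V(\bs\beta)$, I would verify with \Cref{t:cyc} that both $A\otimes B$ and $B\otimes A$ are highest-$\ell$-weight: for $A\otimes B=S_1\otimes\cdots\otimes S_m$ apply \Cref{t:cyc} to $S_1,\dots,S_m$ (the condition ``real for every index $>2$'' holds, and all pairwise products are highest-$\ell$-weight), and for $B\otimes A=S_3\otimes\cdots\otimes S_m\otimes S_1\otimes S_2$ apply \Cref{t:cyc} to this list, whose first $m-2$ entries $S_3,\dots,S_m$ are real, so now the condition ``real for every index $<m-1$'' holds. \Cref{p:vnvstar} then yields that $A\otimes B\cong V(\bs\alpha)\otimes V(\bs\beta)$ is simple, i.e. $S_1\otimes\cdots\otimes S_m$ is simple. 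In the symmetric case one takes instead $A=S_1\otimes\cdots\otimes S_{m-2}$ and $B=S_{m-1}\otimes S_m$.

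The delicate point is precisely the bookkeeping of the reality hypothesis under reordering. The two admissible patterns in \Cref{t:cyc} leave the position pairs $\{1,2\}$ and $\{m-1,m\}$ respectively unconstrained, and the observation that makes the induction close is that in the reordering $S_3,\dots,S_m,S_1,S_2$ the factors $S_1,S_2$ — the only ones not assumed real in the case at hand — land exactly in the free positions $\{m-1,m\}$ of the \emph{other} pattern; this is what lets \Cref{t:cyc} apply to $B\otimes A$ and lets \Cref{p:vnvstar} finish. (Alternatively, one could apply \Cref{t:cyc} to $S_1,\dots,S_m$ and to $S_m^{*},\dots,S_1^{*}$ to see that both $S_1\otimes\cdots\otimes S_m$ and its dual are highest-$\ell$-weight, and then deduce simplicity from the one-dimensionality of the top $\ell$-weight space; this avoids the induction but essentially reproves \Cref{p:vnvstar} in $m$-fold form.)
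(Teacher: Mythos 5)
Your argument is correct, and it is exactly the standard derivation the paper intends (the corollary is stated with its proof omitted): the reverse implication by applying \Cref{t:cyc} to $S_1,\dots,S_m$ and to the cyclic reordering $S_3,\dots,S_m,S_1,S_2$ (so that the non-real factors occupy the unconstrained positions of the other reality pattern) and then concluding with \Cref{p:vnvstar}, and the forward implication from the facts that tensor factors of a simple module are simple and that \Cref{p:sinter} allows adjacent simple factors to be interchanged. Your bookkeeping of the reality hypothesis under the reordering, and the induction giving simplicity of $S_3\otimes\cdots\otimes S_m$, are both sound, so nothing needs to be added.
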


	The following played a crucial role in the proof of \cite[Corollary 3.5.4]{bms}, which shows prime snake modules are strongly real. It will also play a relevant role in the proof of \Cref{t:orddecss}. 
	
	\begin{prop}[{\cite[Proposition 4.1.3(ii)]{naoi:Tsys}}]\label{p:testworksforsnakes}
		Suppose $V(\bs\pi)$ is a prime snake module and that $\bs\omega$ divides $\bs\pi$. Then, $V(\bs\pi)\otimes V(\bs\omega)$ is simple.  \qed
	\end{prop}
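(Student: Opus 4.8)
The plan is to reduce the statement, by a purely formal argument, to the case in which $\bs\omega$ is a single fundamental factor of $\bs\pi$, and then to settle that case through the $q$-character of $V(\bs\pi)$. Write $\bs\pi=\bs\omega_{i_1,a_1}\cdots\bs\omega_{i_l,a_l}$ with each consecutive pair in prime snake position, so that $a_1<\cdots<a_l$; then the fundamental factors of $\bs\pi$ are pairwise distinct, so $\bs\omega=\prod_{k\in K}\bs\omega_{i_k,a_k}$ for a unique $K\subseteq\{1,\dots,l\}$. By \Cref{p:vnvstar} it is enough to show that $V(\bs\pi)\otimes V(\bs\omega)$ and $V(\bs\omega)\otimes V(\bs\pi)$ are both highest-$\ell$-weight. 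I would use the observation that, for any $\bs\varpi\in\mathcal P^+$, the module $V(\bs\varpi)$ is the highest-$\ell$-weight quotient of the tensor product of its fundamental factors listed in weakly decreasing order of spectral parameter: fundamental modules are real (by \eqref{e:defredset}, as $0\notin\mathscr R_{i,i}$), and by \eqref{e:krhwtp} this order makes every pairwise product highest-$\ell$-weight, so \Cref{t:cyc} applies. Feeding this presentation of $V(\bs\omega)$ into \Cref{t:cyc} once more, with $V(\bs\pi)$ placed in the first slot and the fundamental factors of $\bs\omega$ (all real) following it in decreasing order of spectral parameter, shows that $V(\bs\pi)\otimes V(\bs\omega)$ is highest-$\ell$-weight as soon as $V(\bs\pi)\otimes V(\bs\omega_{i_k,a_k})$ is highest-$\ell$-weight for every $k\in K$; placing $V(\bs\pi)$ in the last slot instead handles $V(\bs\omega)\otimes V(\bs\pi)$. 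Hence, by \Cref{p:vnvstar} again, everything reduces to showing that $V(\bs\pi)\otimes V(\bs\omega_{i_j,a_j})$ is simple for each fixed $j\in\{1,\dots,l\}$.

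For this core case, reorderings of tensor factors no longer help, since $V(\bs\pi)$ is prime and cannot be split off from anything; instead I would pass to $q$-characters. It suffices to prove that $\bs\pi\,\bs\omega_{i_j,a_j}$ is the unique dominant $\ell$-weight of $V(\bs\pi)\otimes V(\bs\omega_{i_j,a_j})$: the corresponding $\ell$-weight space is one-dimensional (it is the tensor product of the two top spaces), so uniqueness of the dominant $\ell$-weight forces the module to be generated by the tensor product of the highest-$\ell$-weight vectors, and since the multiset of $\ell$-weights of a tensor product does not depend on the order of the factors, this yields highest-$\ell$-weight in both orders, whence simplicity by \Cref{p:vnvstar}. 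Now prime snake modules in type $A$ are thin and special, with $q$-character described explicitly by the non-overlapping path model of \cite{my:pathB}, and the same holds for the fundamental module $V(\bs\omega_{i_j,a_j})$; one then checks from these descriptions that no product $\bs\mu\bs\nu$ with $\bs\mu\in\wt_\ell V(\bs\pi)$, $\bs\nu\in\wt_\ell V(\bs\omega_{i_j,a_j})$ and $(\bs\mu,\bs\nu)\neq(\bs\pi,\bs\omega_{i_j,a_j})$ is dominant. This verification is precisely \cite[Proposition 4.1.3]{naoi:Tsys}, and it is the only step where the primeness of the snake is used.

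I expect this path-combinatorics step to be the main obstacle: the reduction of the first paragraph is formal, but eliminating the unwanted dominant $\ell$-weights genuinely needs the explicit thin path model for $V(\bs\pi)$ together with the special-ness of prime snake modules. An alternative staying closer to the present paper's toolkit would be a double induction on the length $l$ of $\bs\pi$ and on $|K|$, propagating simplicity from the modules attached to the truncated sub-snakes of $\bs\pi$ through the extended $T$-system short exact sequences for snake modules; there the delicate point is controlling the remaining term of those sequences, which is again governed by the same combinatorics.
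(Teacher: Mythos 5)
The paper does not actually prove this proposition: it is imported verbatim from \cite[Proposition 4.1.3(ii)]{naoi:Tsys}, and the \qedsymbol\ marks an omitted proof, so there is no internal argument to compare yours against. Judged on its own terms, your reduction is correct and stays entirely within the paper's toolkit: since consecutive factors of a prime snake have strictly increasing spectral parameters, $\bs\omega$ is a sub-product of distinct fundamental factors of $\bs\pi$; presenting $V(\bs\omega)$ as the highest-$\ell$-weight quotient of its fundamental factors in decreasing spectral order (realness of fundamental modules, \eqref{e:krhwtp}, \Cref{t:cyc} --- this is exactly how \eqref{e:rmhwtp} is obtained in the paper) and then applying \Cref{t:cyc} with $V(\bs\pi)$ in the first (resp.\ last) slot, followed by \Cref{p:vnvstar}, does reduce the statement to simplicity of $V(\bs\pi)\otimes V(\bs\omega_{i_j,a_j})$ for a single fundamental factor. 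The criterion you use there (the top $\ell$-weight being the unique dominant $\ell$-weight of the product, whose weight space is one-dimensional, forces simplicity) is also standard and correctly applied.

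The genuine gap is at exactly the point you flag: the core claim that $\bs\pi\,\bs\omega_{i_j,a_j}$ is the \emph{only} dominant $\ell$-weight of $V(\bs\pi)\otimes V(\bs\omega_{i_j,a_j})$ is never established. Thinness and speciality of $V(\bs\pi)$ concern its own $q$-character and do not by themselves exclude dominant products $\bs\mu\bs\nu$ with $\bs\mu\in\wt_\ell V(\bs\pi)$, $\bs\nu\in\wt_\ell V(\bs\omega_{i_j,a_j})$; ruling these out requires a genuine combinatorial argument with the non-overlapping path model of \cite{my:pathB}, which you do not carry out but instead defer to \cite[Proposition 4.1.3]{naoi:Tsys} --- that is, to (a case of) the very statement being proved. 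So the net content of your proposal is a correct formal reduction of the cited result to its fundamental-divisor case, plus the same external citation the paper itself makes; this is consistent with how the paper treats the proposition, but it is not an independent proof, and the path-model verification you identify as the main obstacle is precisely the missing piece.
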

	
	Finally, let us collect some results about $\ell$-weights, starting with the well-known facts that
	\begin{equation}\label{e:lwprod}
		\wt_\ell V(\bs\pi_1\bs\pi_2)\subseteq \wt_\ell V(\bs\pi_1)\otimes V(\bs\pi_2) = (\wt_\ell V(\bs\pi_1)) (\wt_\ell V(\bs\pi_2)). 
	\end{equation} 
	As usual, given $\bs\omega\in\mathcal P, i\in I, a\in\mathbb Z$, we shall say $\bs\omega_{i,a}^{\pm 1}$ occurs in $\bs\omega$ if it appears as a factor in a reduced expression of $\bs\omega$ in terms of fundamental $\ell$-weights. More generally, given $\bs\omega,\bs\varpi\in\mathcal P$, we shall say $\bs\varpi$ occurs in $\bs\omega$ if the reduced expression for $\bs\omega$ contains the one for $\bs\varpi$.  The following is also well known:
	\begin{equation}\label{e:lwbounds}
		\bs\omega\in\wt_\ell V(\bs\omega_{i,a})\setminus\{\bs\omega_{i,a}\} \text{ and } \bs\omega_{j,b}^{\pm 1} \text{ occurs in } \bs\omega \quad\Rightarrow\quad a+1\leq  b\le a+h,
	\end{equation}
	and, moreover,
	\begin{equation*}
		b = a+h \quad\Leftrightarrow\quad \bs\omega = (\bs\omega_{i^*,a+h})^{-1} = \,({}^*\bs\omega_{i,a})^{-1}. 
	\end{equation*}
	Given $\bs\pi_1,\bs\pi_2\in\mathcal P^+$, let us write $\bs\pi_1 \gtrdot \bs\pi_2$ if 
	\begin{equation}
		\bs\omega_{i_j,a_j} \text{ occurs in } \bs\pi_j, \ j\in\{1,2\} \quad\Rightarrow\quad a_1>a_2. 
	\end{equation}
	Note that, since fundamental modules are real, an application of \eqref{e:krhwtp} and \Cref{t:cyc} gives:
	\begin{equation}\label{e:rmhwtp}
		\bs\pi_1 \gtrdot \bs\pi_2 \quad\Rightarrow\quad V(\bs\pi_1)\otimes V(\bs\pi_2) \text{ is highest-$\ell$-weight.}
	\end{equation}
	The following lemma follows from an immediate application of \eqref{e:lwprod} and \eqref{e:lwbounds}. 
	
	\begin{lem}\label{l:extocc}
		Suppose $\bs\pi_1,\bs\pi_2\in\mathcal P^+$ satisfy $\bs\pi_1 \gtrdot \bs\pi_2$ and let $\bs\omega\in\wt_\ell V(\bs\pi_1)\otimes V(\bs\pi_2)$. 
		\begin{enumerate}[(a)]
			\item 	If $\bs\pi_2$ occurs in $\bs\omega$, then $\bs\omega = \bs\pi_2 \bs\omega'$ for some $\bs\omega'\in\wt_\ell V(\bs\pi_1)$. 
			\item If $({}^*\bs\pi_1)^{-1}$ occurs in $\bs\omega$, then $\bs\omega = ({}^*\bs\pi_1)^{-1} \bs\omega'$ for some $\bs\omega'\in\wt_\ell V(\bs\pi_2)$. \qed
		\end{enumerate}
	\end{lem}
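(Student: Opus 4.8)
The plan is to reduce both parts to bookkeeping with fundamental factorizations of $\ell$-weights. By \eqref{e:lwprod}, I would first write $\bs\omega=\bs\mu_1\bs\mu_2$ with $\bs\mu_j\in\wt_\ell V(\bs\pi_j)$. Writing the fundamental factorizations $\bs\pi_1=\prod_k\bs\omega_{i_k,a_k}$ and $\bs\pi_2=\prod_l\bs\omega_{j_l,b_l}$, a further (iterated) application of \eqref{e:lwprod} gives $\bs\mu_1=\prod_k\bs\nu_k$ and $\bs\mu_2=\prod_l\bs\rho_l$ with $\bs\nu_k\in\wt_\ell V(\bs\omega_{i_k,a_k})$ and $\bs\rho_l\in\wt_\ell V(\bs\omega_{j_l,b_l})$. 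Now \eqref{e:lwbounds} controls which spectral parameters may occur in each piece: every factor $\bs\omega_{m,c}^{\pm1}$ of $\bs\rho_l$ has $c\ge b_l$, with $c=b_l$ possible only when $\bs\rho_l=\bs\omega_{j_l,b_l}$; dually, every factor $\bs\omega_{m,c}^{\pm1}$ of $\bs\nu_k$ has $c\le a_k+\check h$, with $c=a_k+\check h$ possible only when $\bs\nu_k=({}^*\bs\omega_{i_k,a_k})^{-1}=\bs\omega_{i_k^*,a_k+\check h}^{-1}$. Finally, $\bs\pi_1\gtrdot\bs\pi_2$ says exactly that $a_k>b_l$ for all $k,l$. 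Note also that, since $\bs\pi_2\in\mathcal P^+$, the phrase ``$\bs\pi_2$ occurs in $\bs\omega$'' means that each fundamental $\ell$-weight appears in $\bs\omega$ with exponent at least its exponent in $\bs\pi_2$; dually, since $({}^*\bs\pi_1)^{-1}$ is the inverse of an element of $\mathcal P^+$, ``$({}^*\bs\pi_1)^{-1}$ occurs in $\bs\omega$'' means each fundamental $\ell$-weight appears in $\bs\omega$ with exponent at most its exponent in $({}^*\bs\pi_1)^{-1}$.

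For part (a), I would show that the hypothesis forces $\bs\rho_l=\bs\omega_{j_l,b_l}$ for every $l$; then $\bs\mu_2=\bs\pi_2$, so $\bs\omega=\bs\mu_1\bs\pi_2$ with $\bs\mu_1\in\wt_\ell V(\bs\pi_1)$, as required. If not, pick $l^*$ with $\bs\rho_{l^*}\ne\bs\omega_{j_{l^*},b_{l^*}}$ and $b_{l^*}$ minimal, and set $(j^*,b^*)=(j_{l^*},b_{l^*})$. By the bounds above together with $a_k>b^*$, none of the $\bs\nu_k$, none of the $\bs\rho_l$ with $b_l\ne b^*$ (those with $b_l>b^*$ by the $c\ge b_l$ bound; those with $b_l<b^*$ because minimality makes them equal to $\bs\omega_{j_l,b_l}$), and none of the $\bs\rho_l$ with $b_l=b^*$ that differ from $\bs\omega_{j_l,b^*}$, contribute to the exponent of $\bs\omega_{j^*,b^*}$ in $\bs\omega$; hence that exponent equals $\#\{l:(j_l,b_l)=(j^*,b^*),\ \bs\rho_l=\bs\omega_{j^*,b^*}\}$, which is strictly less than $\#\{l:(j_l,b_l)=(j^*,b^*)\}$, the exponent of $\bs\omega_{j^*,b^*}$ in $\bs\pi_2$ (the index $l^*$ witnesses the strict inequality). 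This contradicts that $\bs\pi_2$ occurs in $\bs\omega$.

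Part (b) is the mirror image, run at the top of the spectrum. I would show the hypothesis forces $\bs\nu_k=\bs\omega_{i_k^*,a_k+\check h}^{-1}$ for every $k$, whence $\bs\mu_1=\prod_k\bs\omega_{i_k^*,a_k+\check h}^{-1}=({}^*\bs\pi_1)^{-1}$ and $\bs\omega=({}^*\bs\pi_1)^{-1}\bs\mu_2$ with $\bs\mu_2\in\wt_\ell V(\bs\pi_2)$. If not, pick $k^*$ with $\bs\nu_{k^*}\ne\bs\omega_{i_{k^*}^*,a_{k^*}+\check h}^{-1}$ and $a_{k^*}$ maximal; by the parameter bounds and $b_l<a_{k^*}$, the exponent of $\bs\omega_{i_{k^*}^*,a_{k^*}+\check h}$ in $\bs\omega$ equals $-\#\{k:a_k=a_{k^*},\ i_k^*=i_{k^*}^*,\ \bs\nu_k=\bs\omega_{i_k^*,a_k+\check h}^{-1}\}$, which is strictly greater (less negative) than $-\#\{k:a_k=a_{k^*},\ i_k^*=i_{k^*}^*\}$, the exponent of $\bs\omega_{i_{k^*}^*,a_{k^*}+\check h}$ in $({}^*\bs\pi_1)^{-1}$ — contradicting that $({}^*\bs\pi_1)^{-1}$ occurs in $\bs\omega$. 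The argument is entirely elementary; the only step I would write out with care is the verification that, at the chosen extreme spectral parameter, no piece other than the trivial (resp.\ extremal) fundamental ones can contribute to the relevant exponent of $\bs\omega$ — and this is precisely where both the strict inequality $a_k>b_l$ furnished by $\bs\pi_1\gtrdot\bs\pi_2$ and the sharp ``only if'' clauses in \eqref{e:lwbounds} are needed.
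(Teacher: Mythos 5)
Your argument is correct and follows essentially the paper's own route: the lemma is stated there with proof omitted as an immediate application of \eqref{e:lwprod} and \eqref{e:lwbounds}, and your factor-by-factor bookkeeping at the extremal spectral parameters (using the strict inequalities from $\bs\pi_1\gtrdot\bs\pi_2$ and the sharp clauses of \eqref{e:lwbounds}) is exactly that application written out in full. The only cosmetic point is your use of $\check h$ where \eqref{e:lwbounds} has $h$; these coincide in type $A$, so nothing is affected.
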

	
	We shall also need the following extract from the main results of \cite{my:pathB, muyou:tsystem}. Let $(\bs i,\bs a)$ be a snake and write $\bs i=(i_1,\cdots, i_l)$ and $\bs a = (a_1,\cdots, a_l)$. Given $1\leq r\leq s\leq l$, let 
	$$ \bs\omega_{\bs i,\bs a,r,s} = \prod_{j=r}^s\bs\omega_{i_j,a_j}.$$
	For convenience, set $\bs\omega_{\bs i,\bs a,1,0}=\bs 1 =\bs\omega_{\bs i,\bs a,l+1,l}$. 
	It follows from \cite[Theorem 6.1]{my:pathB} that 
	\begin{equation}\label{e:my:tsyslw}
		\bs\omega\in \wt_\ell V(\bs\omega_{\bs i,\bs a,r,s}) \quad\Rightarrow\quad 
		\bs\omega_{\bs i,\bs a,1,r-1}\bs\omega ({}^*\bs\omega_{\bs i,\bs a,s+1,l})^{-1}\in \wt_\ell V(\bs\omega_{\bs i,\bs a}).
	\end{equation}
	Moreover, by \cite[Theorem 4.1]{muyou:tsystem},
	\begin{equation}\label{e:my:tsysred}
		\text{$l>1$ and $(\bs i,\bs a)$ is a prime snake} \quad\Rightarrow\quad  V(\bs\omega_{\bs i,\bs a,1,l-1})\otimes V(\bs\omega_{\bs i,\bs a,2,l}) \text{ is reducible}
	\end{equation} 
	and there exists $\bs\omega\in \wt_\ell  V(\bs\omega_{\bs i,\bs a,1,l-1}), \bs\omega' \in \wt_\ell  V(\bs\omega_{\bs i,\bs a,2,l})$ such that
	\begin{equation}\label{e:my:tsysnlw}
		\bs\omega\bs\omega'\notin \wt_\ell  V(\bs\omega_{\bs i,\bs a,1,l-1}\,\bs\omega_{\bs i,\bs a,2,l}). 
	\end{equation}

	\subsection{Vertex Fusion and Linear Order Preservation}\label{ss:fuse}
	In this section, we initiate an analyses about how a choice of pseudo $q$-factorization graph may be more convenient than others depending on the type of result one is trying to prove about the underlying simple module. As an application of this initial analyses, we prove \Cref{t:ps<=>toG}.

	Let $a,b\in\mathbb Z, r,s\in\mathbb Z_{>0}$, and $i\in I$. If $|a-b|\in\mathscr R_i^{r,s}$, it is said that $\bs\omega_{i,a,r}$ and $\bs\omega_{i,b,s}$ are in special position. In that case, it is well-known that there exist unique $c,d\in\mathbb Z, k,l\in\mathbb Z_{\ge 0}$ such that $\bs\omega_{i,c,k}\,\bs\omega_{i,d,l}$ is the $q$-factorization of $\bs\omega_{i,a,r}\,\bs\omega_{i,b,s}$ and $\bs\omega_{i,c,k}$ is divisible by both $\bs\omega_{i,a,r}$ and $\bs\omega_{i,b,s}$. Let us introduce the following terminology: we shall say that $\bs\omega_{i,c,k}$ and $\bs\omega_{i,d,l}$ arise from (or are the result of) the fusion of the KR type polynomials  $\bs\omega_{i,a,r}$ and $\bs\omega_{i,b,s}$. We say the fusion is pure if $k=r+s$ or, equivalently, if $\bs\omega_{i,a,r}\,\bs\omega_{i,b,s}\in\mathcal{KR}$. 
	
	Suppose  $G=(\mathcal V,\mathcal A,\mathcal F)$ is a pseudo $q$-factorization graph over $\bs\pi\in\mathcal P^+$ and that $v,w\in\mathcal V$ are vertices such that $\mathcal F(v) = \bs\omega_{i,a,r}$ and $\mathcal F(w)=\bs\omega_{i,b,s}$, as in the previous paragraph. We can then consider the following pseudo $q$-factorization graph $G'=(\mathcal V',\mathcal A',\mathcal F')$. Recall that, once we have defined $\mathcal V'$ and any map $\mathcal F':\mathcal V'\to\mathcal{KR}$, there exists unique $\mathcal A'$ turning $G'$ into a pseudo $q$-factorization graph. 
	If the fusion is not pure, we let $\mathcal V'=\mathcal V$, and set 
	\begin{equation*}
		\mathcal F'(v) = \bs\omega_{i,c,k}, \quad \mathcal F'(w) = \bs\omega_{i,d,l}, \quad\text{and}\quad \mathcal F'(u)=\mathcal F(u) \text{ for all } u\in\mathcal V\setminus\{v,w\}. 
	\end{equation*}
	The roles of $v$ and $w$ are clearly interchangeable above, i.e., the obtained graphs are isomorphic. In the case the fusion is pure, we let $\mathcal V'=\mathcal V\setminus\{w\}$ and set
	\begin{equation*}
		\mathcal F'(v) = \bs\omega_{i,a,r}\bs\omega_{i,b,s} \quad\text{and}\quad \mathcal F'(u)=\mathcal F(u) \text{ for all } u\in\mathcal V'\setminus\{v\}. 
	\end{equation*}
	Again, the roles of $v$ and $w$ are interchangeable. Evidently, $\bs\pi_{G'}=\bs\pi$. We shall say $G'$ is obtained from $G$ by fusing two vertices and we indicate that by the notation $G'\prec G$. If the fusion is pure, we write $G'< G$. Both $\prec$ and $<$ can be extended by reflexivity and transitivity to equip the set ${\rm Graph}(\bs\pi)$ of pseudo $q$-factorization graphs over $\bs\pi$ with two distinct partial orders. Evidently,
	\begin{equation}
		G'\le G \quad\Rightarrow\quad G'\preceq G,
	\end{equation} 
	but the converse may not be true. Also
	\begin{equation}
		G(\bs\pi)\preceq G \ \ \text{and}\ \ G\le G_f(\bs\pi) \quad\text{for every}\quad G\in {\rm Graph}(\bs\pi).
	\end{equation}
	In particular, $G_f(\bs\pi)$ is the maximum for both partial orders.

	We now address the following question. Suppose $G'\le G$ and that $G$ is totally ordered. Under which additional assumptions can we conclude $G'$ is also totally ordered? To shorten the writing, we go back to abusing of terminology by saying that $\bs\omega\in\mathcal KR$ is a vertex of a given graph, instead of saying $\bs\omega=\mathcal F(v)$ for some vertex $v$. 
	
	It suffices to address the question in the case that $G'$ is obtained from $G$ by fusing only one pair of vertices. Let $\bs\omega$ and $\bs\varpi$ be the fused vertices and assume $(\bs\omega,\bs\varpi)\in \mathcal A$. In particular,
	\begin{equation}\label{e:prodiskr}
		\bs\omega\bs\varpi\in\mathcal{KR}
	\end{equation}
	is a vertex in $G'$. All the remaining vertices of $G$ remain vertices in $G'$. 
	Thus, given $\bs\mu\in\mathcal V\setminus\{\bs\omega,\bs\varpi\}$, we need to  understand under which conditions $\bs\mu$ is comparable to $\bs\omega\bs\varpi$ with respect to the partial order of $\mathcal V'$. Without loss of generality, we may assume $\bs\mu$ is adjacent to either $\bs\omega$ or $\bs\varpi$.

	Write $\bs\omega = \bs\omega_{i,a,r}$ and $\bs\varpi=\bs\omega_{j,b,s}$. Then, the assumption $(\bs\omega,\bs\varpi)\in \mathcal A$ together with \eqref{e:prodiskr} implies
	\begin{equation}\label{e:pfcentrel}
		a = b +r+s \quad\text{and}\quad \bs\omega\bs\varpi = \bs\omega_{i,a-s,r+s}.
	\end{equation}
	Let also $\bs\mu = \bs\omega_{j,c,t}$. 
	Let us check
	\begin{equation*}
		(\bs\mu,\bs\omega)\in\mathcal A \quad\Rightarrow\quad (\bs\mu,\bs\omega\bs\varpi)\in\mathcal A'.
	\end{equation*}	
	Indeed, the assumption $(\bs\mu,\bs\omega)\in\mathcal A$ means
	\begin{equation*}
		c-a = r+t + d(i,j)-2p \quad\text{with}\quad -d([i,j],\partial I)\le p<\min\{r,t\},
	\end{equation*}
	and, therefore,
	\begin{equation*}
		c-(a-s) = (r+s)+t + d(i,j)-2p \quad\text{and}\quad -d([i,j],\partial I)\le p<\min\{r+s,t\}.
	\end{equation*}
	Similarly, one checks
	\begin{equation*}
		(\bs\varpi,\bs\mu)\in\mathcal A \quad\Rightarrow\quad (\bs\omega\bs\varpi,\bs\mu)\in\mathcal A'.
	\end{equation*}	
	Thus, $G'$ may fail to be totally ordered only if 
	\begin{equation}\label{e:middlemu}
		\exists\ \bs\mu\in\mathcal V \quad\text{such that}\quad \bs\varpi<\bs\mu<\bs\omega.
	\end{equation}
	Here, $<$ is the partial order on $\mathcal V$ induced by $\mathcal A$. Up to arrow duality, we may assume $\bs\mu$ is adjacent to $\bs\omega$, so $(\bs\omega,\bs\mu)\in\mathcal A$ and, hence,
	\begin{equation}\label{e:acrel}
		a-c = r+t + d(i,j)-2p \quad\text{with}\quad -d([i,j],\partial I)\le p<\min\{r,t\}.
	\end{equation}
	On the other hand,  \Cref{l:lineps}(b) implies
	\begin{equation}\label{e:bcrel}
		c-b = s+t+d(i,j)-2k \quad\text{for some}\quad k<\min\{s,t\}.
	\end{equation}
	Using \eqref{e:pfcentrel} and \eqref{e:acrel}, we also have
	\begin{equation*}
		c-b = c-(a-r-s) = (c-a) + r+ s = s-t-d(i,j)+2p,
	\end{equation*}
	and, hence,
	\begin{equation}
		t+d(i,j) = k+p.
	\end{equation}
	The upper bounds on \eqref{e:acrel} and \eqref{e:bcrel} then imply
	\begin{equation*}
		t+d(i,j) - k < t \quad\text{and}\quad t+d(i,j) - p < t,
	\end{equation*}
	or, equivalently,
	\begin{equation}\label{e:dupb}
		d(i,j)<\min\{p,k\}.
	\end{equation}
	If $\min\{r,s\}=1$, it follows that $\min\{p,k\}\le 0$ and, hence, \eqref{e:dupb} is not satisfied, i.e., there is no $\bs\mu$ as in \eqref{e:middlemu}. This proves:
	
	\begin{prop}\label{e:topreserved}
		Let $G$ be a totally ordered pseudo $q$-factorization graph. If $G'$ is obtained from $G$ by a sequence of pure fusions such that in every step at least one of the vertices being fused is fundamental, then $G'$ is also totally ordered. \qed  
	\end{prop}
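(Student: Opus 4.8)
The plan is to reduce at once to a single pure fusion, since both the hypothesis and the desired conclusion propagate along a sequence of fusions: if $G'$ is obtained from $G$ by one pure fusion in which one of the two fused vertices is fundamental, and $G$ is totally ordered, it suffices to prove $G'$ is totally ordered, and iterating then handles an arbitrary such sequence. So I would assume $G'$ comes from $G$ by fusing two vertices $\bs\omega = \bs\omega_{i,a,r}$ and $\bs\varpi = \bs\omega_{i,b,s}$ with $\min\{r,s\}=1$ and, replacing $G$ by its arrow-dual if necessary, $(\bs\omega,\bs\varpi)\in\mathcal A$. Purity says $\bs\omega\bs\varpi\in\mathcal{KR}$, which by \eqref{e:defqfact} pins down $a = b+r+s$ and $\bs\omega\bs\varpi = \bs\omega_{i,a-s,r+s}$; this single vertex takes the place of the pair $\{\bs\omega,\bs\varpi\}$ in $G'$, while every other vertex of $G$ persists unchanged.

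Next I would check that passing from $G$ to $G'$ cannot destroy any comparability: substituting $r+s$ for $r$ in \eqref{e:defredset'} only relaxes the strict upper bound on the index $p$ and leaves the lower bound untouched, so $(\bs\mu,\bs\omega)\in\mathcal A$ implies $(\bs\mu,\bs\omega\bs\varpi)\in\mathcal A'$, and dually $(\bs\varpi,\bs\mu)\in\mathcal A$ implies $(\bs\omega\bs\varpi,\bs\mu)\in\mathcal A'$. Since $G$ is totally ordered, the only way $G'$ could fail to be totally ordered is the existence of a vertex $\bs\mu$ lying strictly between $\bs\varpi$ and $\bs\omega$ in the linear order of $G$, and the heart of the argument is to rule this out when $\min\{r,s\}=1$.

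For that, I would write $\bs\mu=\bs\omega_{j,c,t}$ and, using arrow duality once more, assume $\bs\mu$ is adjacent to $\bs\omega$, so $(\bs\omega,\bs\mu)\in\mathcal A$ gives $a-c = r+t+d(i,j)-2p$ with $-d([i,j],\partial I)\le p<\min\{r,t\}$. Applying \Cref{l:lineps}(b) to the chain $\bs\omega>\bs\mu>\bs\varpi$, which satisfies \eqref{e:lineps}, yields $c-b = s+t+d(i,j)-2k$ with $k<\min\{s,t\}$. Combining the first relation with the pure-fusion identity $a=b+r+s$ gives $c-b = s-t-d(i,j)+2p$, hence $t+d(i,j)=k+p$; feeding this back into the two strict upper bounds $t+d(i,j)-k<t$ and $t+d(i,j)-p<t$ produces $d(i,j)<\min\{p,k\}$. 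But $\min\{r,s\}=1$ forces $\min\{p,k\}\le 0\le d(i,j)$, a contradiction. Hence no such $\bs\mu$ exists, $G'$ is totally ordered, and induction on the number of fusions completes the proof.

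I expect the only genuinely delicate point to be the arithmetic bookkeeping of the third paragraph: keeping straight which of $\bs\omega,\bs\varpi$ is the head of the connecting arrow, which one $\bs\mu$ is adjacent to, and how the strict bounds on $p$ and $k$ transform under the substitution $a=b+r+s$; one also has to verify that the triple $\bs\omega,\bs\mu,\bs\varpi$ really satisfies the hypotheses of \Cref{l:lineps}, i.e.\ that its consecutive pairs are in the relation \eqref{e:lineps}. Everything else is an unwinding of the definition \eqref{e:defredset'} together with the characterization of pure fusions.
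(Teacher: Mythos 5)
Your proposal is correct and takes essentially the same route as the paper's proof: reduction to a single pure fusion, the identity $a=b+r+s$, preservation of comparabilities with the fused vertex, reduction to a hypothetical $\bs\mu$ with $\bs\varpi<\bs\mu<\bs\omega$, and the arithmetic $t+d(i,j)=k+p$ forcing $d(i,j)<\min\{p,k\}$, contradicted by $\min\{r,s\}=1$. The one point to tighten (which you yourself flag) is that \Cref{l:lineps}(b) must be applied to the full directed path from $\bs\mu$ down to $\bs\varpi$, whose consecutive vertices are adjacent, rather than to the three-term chain $\bs\omega>\bs\mu>\bs\varpi$ itself, since $\bs\mu$ and $\bs\varpi$ need not be joined by a single arrow; this is exactly how the paper obtains the bound $k<\min\{s,t\}$.
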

	
	\begin{cor}\label{c:topreserved}
		Let $G$ be a fundamental factorization graph. If $G$ is totally ordered, then any pseudo $q$-factorization graph over $\bs\pi_G$ is totally ordered.\qed
	\end{cor}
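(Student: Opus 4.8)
The plan is to deduce this from \Cref{e:topreserved}. Write $\bs\pi:=\bs\pi_G$, so that $G=G_f(\bs\pi)$, and let $G''$ be an arbitrary pseudo $q$-factorization graph over $\bs\pi$. Since the arrow set of a pseudo $q$-factorization graph over $\bs\pi$ is uniquely determined by its vertex set, it suffices to exhibit a chain
\begin{equation*}
	G=H_0,\ H_1,\ \dots,\ H_m=G''
\end{equation*}
of pseudo $q$-factorization graphs over $\bs\pi$ in which, for each $n$, $H_n$ is obtained from $H_{n-1}$ by a \emph{pure} fusion of two vertices at least one of which is fundamental. \Cref{e:topreserved} then applies, since $G$ is totally ordered by hypothesis, and yields that $G''$ is totally ordered.

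To build the chain I would use the elementary observation that every KR type polynomial is assembled from its fundamental factors by pure fusions adding one fundamental factor at a time. Concretely, enumerate the vertices of $G''$ as $v_1,\dots,v_l$, with $v_k=\bs\omega_{i_k,c_k,t_k}$. Because $G=G_f(\bs\pi)$, the vertex set of $G$ is precisely the multiset obtained, counted with multiplicity, as the union over $1\le k\le l$ of the $t_k$ fundamental factors $\bs\omega_{i_k,\,c_k+t_k-1-2p}$, $0\le p<t_k$, of $v_k$. Process the $v_k$ one after another; to produce $v_k$ from its fundamental factors, start from $\bs\omega_{i_k,c_k+t_k-1}$ and, for $s=1,\dots,t_k-1$, fuse the current partial product $\bs\omega_{i_k,\,c_k+t_k-s,\,s}$ with the next fundamental factor $\bs\omega_{i_k,\,c_k+t_k-1-2s}$. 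A direct check shows these two are in special position, since the difference of their spectral parameters is $s+1\in\mathscr R_{i_k}^{s,1}=\{s+1\}$; that their product equals $\bs\omega_{i_k,\,c_k+t_k-1-s,\,s+1}\in\mathcal{KR}$, so the fusion is pure; and that the incoming vertex $\bs\omega_{i_k,\,c_k+t_k-1-2s}$ is fundamental. Once all the $v_k$ have been processed, the vertex set is $\{v_1,\dots,v_l\}$, hence the resulting graph is $G''$, as desired.

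I do not expect a genuine obstacle here: \Cref{e:topreserved} does the real work, and the only point requiring care is the bookkeeping in the second paragraph — in particular verifying that, at the generic step, the product of the partial KR polynomial with the incoming fundamental factor is again a single KR type polynomial, which is exactly the purity hypothesis needed to invoke \Cref{e:topreserved}.
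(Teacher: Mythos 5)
Your proposal is correct and is exactly the argument the paper intends (the corollary is left to the reader as an immediate consequence of \Cref{e:topreserved}): one observes that any pseudo $q$-factorization graph over $\bs\pi_G$ is obtained from the fundamental graph by a sequence of pure fusions in which one of the two fused vertices is always fundamental, which is the same fact the paper invokes again later when treating connected subgraphs of prime snake modules. Your bookkeeping (the partial product $\bs\omega_{i_k,c_k+t_k-s,s}$, the difference $s+1\in\mathscr R_{i_k}^{s,1}$, and purity of each step) checks out.
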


	Let us turn to the proof of \Cref{t:ps<=>toG}.  Given $(\bs i,\bs a)\in I^l\times\mathbb Z^l$, it is well-known and easily checked that the element obtained by removing $(i_k,a_k)$ for some $1\le k\le l$ is also a snake.  This implies
	\begin{equation}\label{e:subsnake}
		(\bs i,\bs a) \text{ is a snake and } \bs\omega \text{ divides } \bs\omega_{\bs i,\bs a} \quad\Rightarrow\quad V(\bs\omega) \text{ is a snake module}.
	\end{equation}
	The following is easily checked directly from the definitions of snakes and factorization graphs.
	
	\begin{lem}\label{l:equivdefpsnake}
		If $(\bs i,\bs a)$ is a snake, the following are equivalent.
		\begin{enumerate}[(i)]
			\item $(\bs i,\bs a)$ is a prime snake.
			\item $G_f(\bs\omega_{\bs i,\bs a})$ is connected.
			\item $G_f(\bs\omega_{\bs i,\bs a})$ is totally ordered.\qed
		\end{enumerate}
	\end{lem}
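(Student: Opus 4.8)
The plan is to establish the chain of implications (i) $\Rightarrow$ (ii) $\Rightarrow$ (iii) $\Rightarrow$ (i), since (iii) $\Rightarrow$ (ii) is trivial (a connected digraph with no oriented cycles that is totally ordered is in particular connected) and (i) $\Rightarrow$ (iii) as well — so really the content is the equivalence of (i) and (ii), plus noting that a totally ordered graph is connected. Concretely, I would argue (ii) $\Leftrightarrow$ (iii) by the topological remark that $G_f(\bs\omega_{\bs i,\bs a})$ never contains an oriented cycle (being a pseudo $q$-factorization graph), so it remains only to see that connectedness forces linearity of the induced partial order; and then reduce everything to showing (i) $\Leftrightarrow$ (ii).

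For (i) $\Rightarrow$ (ii): assume $(\bs i,\bs a)$ is a prime snake, so each consecutive pair $((i_k,a_k),(i_{k+1},a_{k+1}))$ is in prime snake position, i.e.\ $a_k - a_{k+1}\in\mathscr R_{i_k,i_{k+1}}$ (using $a_k>a_{k+1}$ after possibly reindexing, which is the normalization built into snake position). By the defining property of $G_f$, this means $(\bs\omega_{i_k,a_k},\bs\omega_{i_{k+1},a_{k+1}})$ is an arrow, so the vertices are strung together in a directed path $v_1\to v_2\to\cdots\to v_l$; in particular the graph is connected. For (ii) $\Rightarrow$ (i): suppose $(\bs i,\bs a)$ is a snake but not a prime snake, so some consecutive pair, say the $k$-th, is in snake position but not prime snake position; that is $a_k-a_{k+1}\in d(i_k,i_{k+1}) + 2\mathbb Z_{>0}$ but $a_k - a_{k+1}\notin\mathscr R_{i_k,i_{k+1}}$. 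The key point is that $\mathscr R_{i_k,i_{k+1}} = \mathscr R_{i_k,i_{k+1}}^{1,1} = \{d(i_k,i_{k+1})+2 - 2p : -d([i_k,i_{k+1}],\partial I)\le p < 1\} = \{d(i_k,i_{k+1})+2 + 2m : 0\le m\le d([i_k,i_{k+1}],\partial I)\}$, so being in snake position but outside $\mathscr R_{i_k,i_{k+1}}$ forces $a_k - a_{k+1} > d(i_k,i_{k+1}) + 2 + 2\,d([i_k,i_{k+1}],\partial I)$, i.e.\ the gap is strictly larger than every value that would produce an arrow (for fundamental weights). I then need to show no path in $G_f$ connects $v_k$ and $v_{k+1}$ across this "break": any arrow between two fundamental vertices $\bs\omega_{j,c}$ and $\bs\omega_{j',c'}$ with $c>c'$ satisfies $c - c' = d(j,j') + 2 - 2p$ with $p\ge -d([j,j'],\partial I)$, so $c - c'\le d(j,j') + 2 + 2\,d([j,j'],\partial I) \le h$ where $h$ is the Coxeter number; bounding the possible "reach" of any such arrow and using \Cref{l:lineps} to control how weights at intermediate vertices must be sandwiched, one sees a path from $v_k$ to $v_{k+1}$ would have to violate the strict gap. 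This disconnectedness argument is where I'd invoke \Cref{l:lineps}(b): along any directed path the $p$-parameters are forced to lie below $\min$ of the endpoint $r$'s, which here are all $1$, pinning things down enough to derive a contradiction with the oversized gap $a_k - a_{k+1}$.

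The remaining equivalence (ii) $\Leftrightarrow$ (iii): given connectedness, I want linearity. Suppose $G_f(\bs\omega_{\bs i,\bs a})$ is connected but not totally ordered; then there are two vertices $u,w$ incomparable in the induced partial order. Using that the graph comes from a snake, reorder so that the $a$-values are strictly decreasing along $\bs i$ (a snake has distinct, in fact appropriately ordered, spectral parameters); connectedness then produces a walk between any two vertices, and with \Cref{l:lineps} and \Cref{l:positiveppath} one shows the arrow relation between consecutive vertices of $\bs i$ is actually present, giving the directed path $v_1\to\cdots\to v_l$, which is a total order — contradiction. This last part is largely bookkeeping once the prime-snake-position arrows are in hand, so the real obstacle is the (ii) $\Rightarrow$ (i) direction: translating the failure of prime snake position into genuine disconnectedness of $G_f$, which requires the careful computation of $\mathscr R_{i_k,i_{k+1}}$ as an interval of even-spaced integers bounded above by (essentially) the Coxeter number, together with the monotonicity of $p$-parameters from \Cref{l:lineps}. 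I would isolate the disconnectedness claim as a small sublemma, prove it by contradiction along a hypothetical connecting path, and then assemble the three implications.

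Given the excerpt states the lemma "is easily checked directly from the definitions," I expect the author's proof to be short and to lean precisely on the interval description of $\mathscr R_{i,i'}$ and on \Cref{l:lineps}/\Cref{l:positiveppath}; my plan above reconstructs that route, with the anticipated friction being purely the indexing and the $\partial I$-boundary term in the definition of $\mathscr R_{i,i'}$.
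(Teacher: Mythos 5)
Your skeleton is the right one (the easy implications via the directed path $v_l\to\cdots\to v_1$, plus the contrapositive of (ii)$\Rightarrow$(i)), and indeed the paper offers no written argument, calling the lemma easily checked. But the one step that carries all the content is only gestured at in your sketch, and the two tools you cite cannot close it. What must be shown is: if the $k$-th consecutive pair is in snake but not prime snake position, then no arrow of $G_f(\bs\omega_{\bs i,\bs a})$ joins $\{v_1,\dots,v_k\}$ to $\{v_{k+1},\dots,v_l\}$, i.e.\ $a_s-a_r\notin\mathscr R_{i_r,i_s}$ for every $r\le k<s$ (a crossing walk must contain a crossing arrow, so single arrows suffice). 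The bound ``reach $\le h$'' does nothing here: the offending gap need only satisfy $a_{k+1}-a_k\ge d(i_k,i_{k+1})+4+2d([i_k,i_{k+1}],\partial I)$, which can be as small as $4$ (take $i_k=i_{k+1}=1$), while $\max\mathscr R_{i_r,i_s}$ for other pairs can be close to $h=n+1$, so no crude magnitude comparison rules out a crossing arrow. Moreover, \Cref{l:lineps}(b) and \Cref{l:positiveppath} apply to sequences whose consecutive members already satisfy \eqref{e:lineps}; that hypothesis fails precisely at the break $k$, so ``sandwiching the $p$-parameters along a hypothetical path'' never gets off the ground. Note also that your (ii)$\Leftrightarrow$(iii) discussion quietly invokes the same unproved claim (connectedness forces the consecutive arrows, which \emph{is} (ii)$\Rightarrow$(i)), and ``acyclic and connected implies totally ordered'' is false for digraphs in general, so it cannot be waved through as a topological remark.

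The missing piece is a short type-$A$ estimate, and some version of it is unavoidable. Write $\max\mathscr R_{i,j}=d(i,j)+2+2d([i,j],\partial I)=\min\{i+j,\,2(n+1)-i-j\}$. Applying the diagram involution $i\mapsto n+1-i$ if necessary, assume the minimum at the break is $i_k+i_{k+1}$; since the gap has the same parity as $d(i_k,i_{k+1})$, non-primality gives $a_{k+1}-a_k\ge i_k+i_{k+1}+2$. For $r\le k<s$, using $a_{u+1}-a_u\ge d(i_u,i_{u+1})+2$ for the other steps and the triangle inequalities $i_k+\sum_{u=r}^{k-1}d(i_u,i_{u+1})\ge i_r$ and $i_{k+1}+\sum_{u=k+1}^{s-1}d(i_u,i_{u+1})\ge i_s$, one obtains $a_s-a_r\ge i_r+i_s+2>\max\mathscr R_{i_r,i_s}$, so no arrow crosses the cut and $G_f(\bs\omega_{\bs i,\bs a})$ is disconnected. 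With this in hand, the clean assembly is (i)$\Rightarrow$(iii)$\Rightarrow$(ii)$\Rightarrow$(i), exactly as you planned; without it, the proposal has a genuine gap at its central claim.
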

	
	\begin{proof}[Proof of \Cref{t:ps<=>toG}]
		If $V(\bs\pi)$ is a prime snake module, \Cref{l:equivdefpsnake} implies $G_f(\bs\pi)$ is totally ordered while \Cref{c:topreserved} implies every pseudo $q$-factorization graph over $\bs\pi$ is totally ordered. Conversely, in particular $G=G_f(\bs\pi)$ is totally ordered. In light of \Cref{l:equivdefpsnake}, we need to show $\bs\pi=\bs\omega_{\bs i,\bs a}$ for some snake $(\bs i,\bs a)$. Let $\bs\omega_{i_k,a_k}, 1\le k\le l$, be the fundamental factors of $\bs\pi$ (with possible repetitions) labeled  so that $a_1\le a_2\le \dots\le a_l$. It remains to check 
		\begin{equation}\label{e:ps<=>toG}
			a_{k+1}-a_k\in\mathscr R_{i_k,i_{k+1}} \quad\text{for all}\quad 1\le k<l.
		\end{equation}
		Let $\le$ denote the partial order on $\mathcal V$ induced from the arrows of $G$. Since $\mathscr R_{i,j}\subseteq\mathbb Z_{>0}$, we have
		\begin{equation*}
			\bs\omega_{i_k,a_k} \ngeq \bs\omega_{i_{k+1},a_{k+1}}\quad\text{for all}\quad 1\le k<l.
		\end{equation*} 
		Thus, since $G$ is totally ordered, the $a_k$ must be all distinct and  $\bs\omega_{i_1,a_1}<\bs\omega_{i_2,a_2}<\dots<\bs\omega_{i_l,a_l}$. 
		In particular, the only possible ordered path  from $\bs\omega_{i_k,a_k}$ to $\bs\omega_{i_{k+1},a_{k+1}}$ must have a single arrow having $\bs\omega_{i_k,a_k}$ and $\bs\omega_{i_{k+1},a_{k+1}}$ as head and tail, thus proving \eqref{e:ps<=>toG}.
	\end{proof}
	
	Let us also record the following lemma. 
	
	\begin{lem}
		Let $G$ be a pseudo $q$-factorization graph such that $V(\bs\pi_G)$ is a prime snake module. If $H$ is a connected subgraph of $G$, then $V(\bs\pi_H)$ is also a prime snake module.
	\end{lem}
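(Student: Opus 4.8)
The plan is to transport the problem to the fundamental level, where ``being a prime snake'' is the connectedness condition of \Cref{l:equivdefpsnake}. Since $V(\bs\pi_G)$ is a prime snake module, \Cref{t:ps<=>toG} shows that $G$, hence also $G_f(\bs\pi_G)$, is totally ordered, and that $\bs\pi_G=\bs\omega_{\bs i,\bs a}$ for a prime snake $(\bs i,\bs a)$ with $a_1<\cdots<a_l$; write $v_k=\bs\omega_{i_k,a_k}$ for the fundamental factors of $\bs\pi_G$. Because $\bs\pi_H$ divides $\bs\pi_G$, \eqref{e:subsnake} shows that $V(\bs\pi_H)$ is a snake module, say $\bs\pi_H=\bs\omega_{\bs i',\bs a'}$, and $G_f(\bs\pi_H)$ is precisely the subgraph of $G_f(\bs\pi_G)$ induced on those $v_k$ that occur in $\bs\pi_H$. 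Replacing $H$ by the induced subgraph of $G$ on its vertex set (this only adds arrows, so it keeps $H$ connected and does not change $\bs\pi_H$), it therefore suffices, by \Cref{l:equivdefpsnake}, to show that the above induced subgraph of $G_f(\bs\pi_G)$ is connected.

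The first ingredient is a structural description of the vertices of $G$. Since $\min\mathscr R_{i,j}\ge 2$ for all $i,j$, no fundamental factor of $\bs\pi_G$ can be interleaved among the fundamental factors of a single KR vertex $\bs\omega_{i,a,r}$ of $G$: such a factor would create a consecutive pair of the prime snake $(\bs i,\bs a)$ whose difference of $a$-entries is $1$. Hence the fundamental factors of each vertex of $G$ form a \emph{block}, i.e.\ a set $\{v_m,\dots,v_{m+r-1}\}$ of consecutive vertices of $G_f(\bs\pi_G)$; since $\bs\pi_G$ is multiplicity free these blocks partition $\{v_1,\dots,v_l\}$ into consecutive intervals $B^{(1)}<\cdots<B^{(p)}$. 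As every arrow of $G$ points from the KR vertex of larger center to that of smaller center (because $\mathscr R\subseteq\mathbb Z_{>0}$), the order of the blocks agrees with the total order of $G$. Thus $H$ is carried by a union $T=\bigcup_{t\in S}B^{(t)}$ of some, possibly non-consecutive, blocks.

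The heart of the matter is the following claim: \emph{if $u\to u'$ is an arrow of $G$, with blocks $B$ of $u$ and $B'$ of $u'$ (so that $B'<B$), then $v_{\min B}\to v_{\max B'}$ is an arrow of $G_f(\bs\pi_G)$.} Writing $u=\bs\omega_{j,\beta,s}$ and $u'=\bs\omega_{i,\alpha,r}$, the arrow of $G$ means $\beta-\alpha=r+s+d(i,j)-2p$ with $-d([i,j],\partial I)\le p<\min\{r,s\}$, and a one-line computation gives $a_{\min B}-a_{\max B'}=2+d(i,j)-2p$. Applying \Cref{l:lineps} to the consecutive chunk of the prime snake running from $v_{\max B'}$ to $v_{\min B}$ — all of whose KR ranks equal $1$ — identifies the exponent of part (a) with this same $p$, and part (b) then forces $p\le 0$. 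Since also $p\ge -d([i,j],\partial I)$, the difference $a_{\min B}-a_{\max B'}=2+d(i,j)-2p$ lies in $\mathscr R_{i,j}=\mathscr R^{1,1}_{i_{\max B'},i_{\min B}}$, which is exactly the asserted arrow. I expect this claim to be the main obstacle: this is the one place where we genuinely use that $V(\bs\pi_G)$ is a prime snake and not merely that $G$ is totally ordered, the prime-snake hypothesis being precisely what excludes ``non-reduced'' positions $p\ge 1$ between two KR vertices of $G$.

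The rest is assembly. Each block $B^{(t)}$ is internally connected in $G_f(\bs\pi_G)$, since consecutive fundamental factors of a prime snake are joined by an arrow; and every arrow of $H$ — equivalently of $G$, as $H$ is now the induced subgraph on its vertices — yields, by the claim, an arrow of $G_f(\bs\pi_H)$ joining the two corresponding blocks. Since $H$ is connected, a spanning walk of $H$ chains these arrows into a connected subgraph of $G_f(\bs\pi_H)$ that contains all of $\{v_k:k\in T\}$; hence $G_f(\bs\pi_H)$ is connected. By \Cref{l:equivdefpsnake} applied to $(\bs i',\bs a')$, $V(\bs\pi_H)$ is a prime snake module.
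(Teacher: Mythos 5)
Your proof is correct, and its skeleton agrees with the paper's up to the decisive step: both use \eqref{e:subsnake} to see that $V(\bs\pi_H)$ is a snake module and \Cref{l:equivdefpsnake} to reduce primality to the connectedness of $G_f(\bs\pi_H)$. Where you genuinely diverge is in how that connectedness is obtained. The paper settles it in one sentence via the fusion formalism of \Cref{ss:fuse}: $H$ is a pseudo $q$-factorization graph over $\bs\pi_H$, hence is obtained from $G_f(\bs\pi_H)$ by pure fusions each involving a fundamental vertex, and the paper asserts that if $G_f(\bs\pi_H)$ were disconnected then $H$ would be disconnected as well. You instead work inside the ambient prime snake: the fundamental factors of each KR vertex of $G$ form a block of consecutive snake entries (using $\min\mathscr R_{i,j}\ge 2$ and multiplicity-freeness of a snake's Drinfeld polynomial), and every arrow of $G$ descends to an arrow of $G_f(\bs\pi_G)$ between $v_{\min B}$ and $v_{\max B'}$, because \Cref{l:lineps}(a) identifies the parameter $p$ of the arrow with $p_{N,1}$ for the intervening chunk of the snake and \Cref{l:lineps}(b) forces $p_{N,1}<\min\{r_1,r_N\}=1$, i.e.\ $p\le 0$; chaining blocks along a spanning walk of $H$ then gives connectedness of $G_f(\bs\pi_H)$. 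I checked the block structure, the computation $a_{\min B}-a_{\max B'}=2+d(i,j)-2p$, and the bounds, and they are all sound. The trade-off: the paper's argument is shorter and, at this step, uses nothing about the snake structure of $\bs\pi_G$ (only that $H$ is a pseudo $q$-factorization graph over $\bs\pi_H$), but it leaves implicit the fact that pure fusions cannot join previously disconnected components; your argument is longer but fully explicit, makes visible exactly where the prime-snake hypothesis is used (it is what excludes $p\ge 1$), and yields a small bonus, namely an explicit projection of arrows of $G$ to fundamental arrows between the extreme factors of the corresponding strings.
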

	
	\begin{proof}
		It follows from  \eqref{e:subsnake} that $V(\bs\pi_H)$ is a snake module, while \Cref{l:equivdefpsnake} implies  it is prime if and only if $G_f(\bs\pi_H)$ is connected. Since $H$ is a pseudo $q$-factorization graph over $\bs\pi_H$ and, hence, it can be obtained from $G_f(\bs\pi_H)$ by a sequence of pure fusions such that in every step at least one of the vertices being fused is fundamental, if $G_f(\bs\pi_H)$ were disconnected, so would $H$ be, yielding a contradiction.
	\end{proof}

	\subsection{Proof of \Cref{t:orddecss}}\label{ss:orddecss}
	We start by remarking that, if $G_f(\bbs\pi)$ is not a singleton, then $G_f(\bs\pi)$ is connected. Indeed, let $v,w$ be vertices in $G$ and denote by $\bar{\mathcal F}$ the pseudo $q$-factorization map of $\bar G=G_f(\bbs\pi)$. Then, there are vertices $\bar v,\bar w$ in $\bar G$ such that
	\begin{equation*}
		\mathcal F(v)=\bar{\mathcal F}(\bar v) \quad\text{and}\quad \mathcal F(w)=\bar{\mathcal F}(\bar w). 
	\end{equation*}
	Since $\bar G$ is totally ordered, there is a directed path linking $\bar v$ to $\bar w$, say $\bar v_1=\bar v, \bar v_2, \dots, \bar v_l=\bar w$. By definition of $\bbs\pi$, there exist vertices $v_k$ in $G$ such that 
	\begin{equation}\label{e:lift}
		\mathcal F(v_k)=\bar{\mathcal F}(\bar v_k)\quad\text{for all}\quad 1\le k\le l,
	\end{equation}
	from where we conclude $v_1,\dots,v_l$ is a path in $G$. Since this conclusion is independent of the choice of $v_k$, we can choose $v_1=v$ and $v_l=w$. 
	
	Henceforth, if $H$ is a maximal totally ordered subgraph of $G$, we shall simply say $H$ is an mtos. The first key step is:

	\begin{lem}\label{l:maxtosgss}
		Suppose $\bs\pi$ has snake support and $H\triangleleft G=G_f(\bs\pi)$. Then, $H$ is an mtos if and only if $\bs\pi_H=\bbs\pi$. 
	\end{lem}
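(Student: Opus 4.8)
The plan is to prove both implications by leveraging the characterization of prime snake modules via totally ordered fundamental factorization graphs (Lemma \ref{l:equivdefpsnake}) together with the fact that a subgraph $H$ is a pseudo $q$-factorization graph over $\bs\pi_H$, hence obtainable from $G_f(\bs\pi_H)$ by a sequence of pure fusions in which at least one fused vertex is fundamental. For the easy direction, suppose $\bs\pi_H=\bbs\pi$. Since $\bs\pi$ has snake support, $V(\bbs\pi)$ is a prime snake module, so by Lemma \ref{l:equivdefpsnake} the graph $G_f(\bbs\pi)$ is totally ordered; by \Cref{c:topreserved} (or \Cref{e:topreserved}) every pseudo $q$-factorization graph over $\bbs\pi$ is totally ordered, and $H$ is such a graph. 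So $H$ is totally ordered. To see $H$ is \emph{maximal} totally ordered, I would argue that any totally ordered $H'\triangleleft G$ with $H\subsetneq H'$ would have a vertex $v$ outside $H$; since $\bs\pi_H=\bbs\pi$ already contains one copy of every fundamental factor of $\bs\pi$, the extra vertex $\mathcal F(v)$ is a repeat of some fundamental factor already present in $H$, i.e.\ $\mathcal F(v)=\bs\omega_{i,a}$ equals $\mathcal F(v')$ for some $v'\in H$. But then $v$ and $v'$ are non-adjacent vertices (they have the same $\ell$-weight, so $0\notin\mathscr R_{i,i}$ forces no arrow) carrying the same KR polynomial, so in $H'$ they are incomparable in the induced partial order, contradicting total order of $H'$. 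Hence $H$ is an mtos.

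For the converse, suppose $H$ is an mtos; I must show $\bs\pi_H=\bbs\pi$. First, $H$ cannot contain two distinct vertices carrying the same fundamental $\ell$-weight, by the same incomparability argument as above; so $\bs\pi_H$ divides $\bbs\pi$, and it remains to show every fundamental factor of $\bs\pi$ occurs in $\bs\pi_H$, equivalently every vertex $\bar u$ of $\bar G=G_f(\bbs\pi)$ is ``hit'' by $H$ in the sense that some $v\in H$ has $\mathcal F(v)=\bar{\mathcal F}(\bar u)$. Suppose not: let $\bar u$ be a vertex of $\bar G$ whose $\ell$-weight does not appear among the vertices of $H$. Since $\bar G$ is totally ordered (snake support), I would like to enlarge $H$ by adjoining a vertex of $G$ lifting $\bar u$ and derive that the enlargement is still totally ordered, contradicting maximality. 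Concretely, using the lifting property \eqref{e:lift} already established at the start of \Cref{ss:orddecss}, the totally ordered chain in $\bar G$ that interleaves the $\ell$-weights of $H$ with $\bar u$ lifts to an ordered chain in $G$; I would show this produces a totally ordered subgraph of $G$ properly containing $H$.

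The main obstacle will be this last point: showing that $H\cup\{v\}$ (for an appropriate lift $v$ of $\bar u$) is again totally ordered as a subgraph of $G$, not merely that a path exists. The difficulty is that total order of $H$ as a subgraph of $G$ involves \emph{all} pairs of its vertices, and $G$ may have arrows not visible in $\bar G$. The clean way around this is to observe that, since $V(\bbs\pi)$ is a prime snake and $\bs\pi_{H\cup\{v\}}$ divides $\bs\pi$ with $\overline{\bs\pi_{H\cup\{v\}}}$ dividing $\bbs\pi$, Proposition \ref{p:testworksforsnakes} and \eqref{e:subsnake} give that $V(\bs\pi_{H\cup\{v\}})$ is itself a (prime) snake module — one checks its fundamental factorization graph is connected because it is a connected subchain of the totally ordered $\bar G$ — hence by Lemma \ref{l:equivdefpsnake} and \Cref{c:topreserved} every pseudo $q$-factorization graph over $\bs\pi_{H\cup\{v\}}$, in particular $H\cup\{v\}$ viewed inside $G$, is totally ordered. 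This contradicts maximality of $H$ and completes the proof. I would also need to check the bookkeeping that adding $v$ really does enlarge $H$ (i.e.\ $v\notin H$), which is immediate since $\mathcal F(v)$ is not an $\ell$-weight of any vertex of $H$ by choice of $\bar u$.
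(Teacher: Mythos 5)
Your forward direction is fine and essentially the paper's argument: with $\bs\pi_H=\bbs\pi$, the subgraph $H$ is (isomorphic to) $G_f(\bbs\pi)$, hence totally ordered by \Cref{l:equivdefpsnake}, and maximality follows because any added vertex would repeat a fundamental factor already in $H$, and two vertices with the same $\ell$-weight cannot be joined by a directed path (the spectral parameter strictly changes along arrows), so they are incomparable.

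The converse, however, has a genuine gap at exactly the step you flag as the ``main obstacle''. You adjoin a single lift $v$ of a missing vertex $\bar u$ and claim $V(\bs\pi_{H\cup\{v\}})$ is a \emph{prime} snake because its fundamental graph ``is a connected subchain of the totally ordered $\bar G$''. Nothing in your setup guarantees this: at this stage of the proof the $\ell$-weights of $H$ need not occupy a consecutive segment of the snake (that they do occupy all of it is precisely what is being proved), so the weights of $H\cup\{v\}$ may be non-consecutive, and the induced subgraph of a totally ordered graph on a non-consecutive set can be disconnected. For instance, in a prime snake $\bar v_1,\bar v_2,\bar v_3$ whose only arrows are $(\bar v_2,\bar v_1)$ and $(\bar v_3,\bar v_2)$, the induced subgraph on $\{\bar v_1,\bar v_3\}$ has no arrows; by \eqref{e:subsnake} the corresponding module is a snake module but not a prime one, and by \Cref{l:equivdefpsnake} its fundamental graph is not totally ordered. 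So $H\cup\{v\}$ may genuinely fail to be totally ordered for a bad choice of $\bar u$, and \Cref{p:testworksforsnakes} (a simplicity statement about tensoring with a divisor) does not help here. What is actually needed — and what the paper's proof supplies — is a careful choice of \emph{which} missing weight to insert and, in general, the adjunction of a whole directed path of $G$ rather than a single vertex: the paper takes the minimal index $k$ where the weights of $H$ deviate from $\bar v_1,\bar v_2,\dots$, and either appends a lift of the next consecutive vertex (case (i), where consecutiveness makes total order automatic) or lifts a directed path of $G$ between two vertices of $H$, whose image in $\bar G$ is forced to be a consecutive run $\bar v_{k-1},\bar v_k,\dots$, and then enlarges $H$ by that run. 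Without an argument of this kind (identifying a specific vertex or path whose addition provably preserves total order), your contradiction with maximality is not established.
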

	
	\begin{proof}
		Let $\bar v_1,\dots,\bar v_l$ be the vertices of $\bar G=G_f(\bbs\pi)$ numbered so that $\bar v_1<\bar v_2<\cdots<\bar v_l$. If $l=1$, there is nothing to do. Thus, assume henceforth $l>1$, so $G$ is connected.
		
		Let $v_1,\dots,v_l$ be vertices in $G$ satisfying \eqref{e:lift} and let $H$ be the subgraph such that $\mathcal V_H = \{v_k: 1\le k\le l\}$. By definition, $\bs\pi=\bbs\pi$ and $H$ is totally ordered. If it were not maximal, there would exist $v\in\mathcal V_G$ such that the subgraph $H'$ whose vertex set is $\mathcal V_H\cup\{v\}$ is totally ordered. By definition of $\bbs\pi$, we must have $\mathcal F(v)=\mathcal F(v_k)$ for some $k$. But vertices having the same image under $\mathcal F$ are not linked by a directed path. Hence, they are not comparable in the partial order of $\mathcal V_G$, i.e., $H'$ is not totally ordered, yielding a contradiction. This proves that if $\bs\pi_H = \bbs\pi$, then $H$ is an mtos. 
		
		Conversely, if $H$ is an mtos, let $v_1,\dots, v_m$ be an enumeration of its vertices so that $v_1<v_2<\cdots<v_m$. The above argument shows we must have $\mathcal F(v_k)\ne\mathcal F(v_j)$ for all $1\le j<k\le m$. It then suffices to show $m=l$. Evidently, $m\le l$. If $m<l$, we split the argument in two cases:
		\begin{enumerate}[(i)]
			\item $\bar{\mathcal F}(\bar v_k)=\mathcal F(v_k)$ for all $1\le k\le m$;
			\item there exists $1\le k\le m$ such that  $\bar{\mathcal F}(\bar v_k)\ne\mathcal F(v_k)$.
		\end{enumerate}
		In case (i), choose $v_{m+1}$  such that $\mathcal F(v_{m+1}) = \bar{\mathcal F}(\bar v_{m+1})$ and let $H'$ be the subgraph whose vertices are $v_k, 1\le k\le m+1$. Then $H'$ is totally ordered and properly contains $H$, yielding a contradiction. In case (ii), let $k$ be the minimal value with the given property and chose $v\in\mathcal V_H$ such that $\mathcal F(v)=\bar{\mathcal F}(\bar v_k)$. If $k=1$, chose a directed path $w_1,\dots,w_r$ in $G$ with $w_1=v$ and $w_r=v_k$ and let $H'$ be the subgraph whose vertex-set is $\{w_s: 1\le s\le r\}\cup \mathcal V_H$. Then, $H'$ is totally ordered and contains $H$ properly, yielding a contradiction. Finally, if $k>1$, choose a directed path $w_1,\dots, w_r$ in $G$ such that $w_1=v_{k-1}, w_r=v_k$, and $w_1<w_2<\cdots<w_r$. Let $\bar w_k\in \mathcal V_{\bar G}$ be such that 
		\begin{equation*}
			\mathcal F(w_{s}) = \bar{\mathcal F}(\bar w_s) \quad\text{for all}\quad 1\le s\le r. 
		\end{equation*}
		Since $w_1,\dots, w_r$ is a directed path, then so is $\bar w_1,\dots, \bar w_r$ and, hence,
		\begin{equation*}
			\bar w_s = \bar v_{k+s-2} \quad\text{for all}\quad 1\le s\le r. 
		\end{equation*}
		If it were $r=2$, we would get a contradiction with (ii). But if $r>2$, the subgraph $H'$ whose vertex-set is $\mathcal V_H\cup\{w_s:1\le s\le r\}$ is a totally ordered graph properly containing $H$, yielding the usual contradiction. 
	\end{proof}
	
	It follows from \Cref{l:maxtosgss} that, if $H$ and $H'$ are mtos of $G$, then $H\cong H'$. Moreover, we also have
	\begin{equation}
		G\setminus H\cong G\setminus H'.
	\end{equation}

	\begin{lem}\label{l:cchss}
		If $H$ is an mtos of $G$, then every connected component of $G\setminus H$ has snake support.
	\end{lem}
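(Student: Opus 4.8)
The plan is to reduce the snake-support property of a component $K$ of $G\setminus H$ to the connectedness of the fundamental factorization graph $G_f(\overline{\bs\pi_K})$ — here $\overline{\bs\pi_K}$ denotes the Drinfeld polynomial with exactly one copy of each fundamental factor of $\bs\pi_K$ — and then to establish that connectedness by transporting the connectedness of $K$ along a natural ``level'' morphism of $G=G_f(\bs\pi)$ onto $\bar G:=G_f(\bbs\pi)$, which is totally ordered because $\bs\pi$ has snake support.

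To set up the morphism I would write $\bbs\pi=\bs\omega_{\bs i,\bs a}$ for a prime snake $(\bs i,\bs a)=((i_1,\dots,i_l),(a_1,\dots,a_l))$ and enumerate the vertices $\bar v_1,\dots,\bar v_l$ of $\bar G$ with $\bar{\mathcal F}(\bar v_k)=\bs\omega_{i_k,a_k}$; these $\ell$-weights are pairwise distinct and constitute exactly the fundamental factors of $\bs\pi$. Consequently each vertex $v$ of $G$ satisfies $\mathcal F(v)=\bar{\mathcal F}(\bar v_{k(v)})$ for a unique index $k(v)$, and $\phi\colon\mathcal V_G\to\mathcal V_{\bar G}$, $v\mapsto\bar v_{k(v)}$, is a morphism of pseudo $q$-factorization graphs: it preserves $\mathcal F$ by construction, while an arrow $(v,w)\in\mathcal A_G$ encodes that $V(\mathcal F(v))\otimes V(\mathcal F(w))$ is reducible and highest-$\ell$-weight, which — for the same pair of fundamental $\ell$-weights, now read off in $\bar G=G_f(\bbs\pi)$ — forces $(\phi(v),\phi(w))\in\mathcal A_{\bar G}$. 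I would also record that $\phi$ is injective on the two endpoints of any arrow, since $V(\bs\omega)\otimes V(\bs\omega)$ is simple for fundamental $\bs\omega$ (fundamental modules being real), so vertices with equal $\mathcal F$-image are never adjacent.

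Now fix a connected component $K$ of $G\setminus H$. Being the induced subgraph of the fundamental factorization graph $G$ on its vertex set, $K=G_f(\bs\pi_K)$; hence $\overline{\bs\pi_K}=\prod_{k\in S}\bar{\mathcal F}(\bar v_k)$ where $S=\{k(w):w\in\mathcal V_K\}$, and $\overline{\bs\pi_K}$ divides $\bbs\pi$. By \eqref{e:subsnake} the module $V(\overline{\bs\pi_K})$ is a snake module, so by \Cref{l:equivdefpsnake} it remains to prove $G_f(\overline{\bs\pi_K})$ is connected. Since $\overline{\bs\pi_K}$ divides $\bbs\pi$, the graph $G_f(\overline{\bs\pi_K})$ may be identified with the subgraph of $\bar G$ induced on $\phi(\mathcal V_K)=\{\bar v_k:k\in S\}$; and this induced subgraph is connected because, for $k,k'\in S$, choosing $w,w'\in\mathcal V_K$ with $k(w)=k$, $k(w')=k'$ and an undirected path from $w$ to $w'$ inside the connected graph $K$, the image of that path under $\phi$ is an undirected walk from $\bar v_k$ to $\bar v_{k'}$ through vertices of $\phi(\mathcal V_K)$, since $\phi$ takes adjacent (hence distinct) vertices to adjacent vertices of $\bar G$. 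Once $G_f(\overline{\bs\pi_K})$ is connected, \Cref{l:equivdefpsnake} yields that $V(\overline{\bs\pi_K})$ is a prime snake module, i.e.\ $\bs\pi_K$ has snake support.

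The step I expect to require the most care is the identification of $G_f(\overline{\bs\pi_K})$ with the subgraph of $\bar G$ induced on $\phi(\mathcal V_K)$, together with the realization that it is \emph{this} reduction which allows connectedness to be transported: passing from $\bs\pi_K$ to $\overline{\bs\pi_K}$ discards the repeated fundamental factors of $K$, and deleting vertices can disconnect a connected graph, so the connectedness of $K$ does not by itself suffice — it is precisely the morphism $\phi$ onto the totally ordered $\bar G$, under which images of walks are walks, that rescues the argument.
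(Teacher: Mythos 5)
Your argument is correct and takes essentially the same route as the paper: since $\overline{\bs\pi_K}$ divides $\bbs\pi$, \eqref{e:subsnake} shows $V(\overline{\bs\pi_K})$ is a snake module, and \Cref{l:equivdefpsnake} upgrades it to a prime snake module once $G_f(\overline{\bs\pi_K})$ is seen to be connected. The only difference is that you make explicit, via the morphism $\phi$ onto $\bar G=G_f(\bbs\pi)$ and the identification of $G_f(\overline{\bs\pi_K})$ with the induced subgraph on $\phi(\mathcal V_K)$, the transport of connectedness from the component $K$ — a verification the paper's one-line proof leaves implicit.
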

	
	\begin{proof}
		Let $C$ be a connected component of $G\setminus H$. Then, $\bbs\pi_C$ divides $\bbs\pi_G$ and \eqref{e:subsnake} implies $V(\bbs\pi_C)$ is a snake module, which is prime by \Cref{l:equivdefpsnake}. 
	\end{proof}
	
	Fix an mtos of $G$, say $G_1$. A recursive application of Lemmas \ref{l:maxtosgss} and \ref{l:cchss}  produces a multicut $\mathcal G=G_1,\dots,G_l$ of $G$ which is an mtos-quochain. If $\mathcal G'=G_1',\dots,G'_m$ is another mtos-quochain, up to reordering, we can assume $G_2'$ arises from a connected component of $G\setminus G'_1$ which isomorphic to the connected component which gave rise to $G_2$ in $G\setminus G_1$. Hence, $G_2\cong G_2'$. An obvious inductive argument completes the proof that $G$ has a unique mtos-decomposition. 
	
	It remains to prove the ``moreover'' part of \Cref{t:orddecss}. To shorten notation, set
	\begin{equation*}
		\bs\pi_k = \bs\pi_{G_k} \quad\text{for}\quad 1\le k\le l.
	\end{equation*}
	Since $V(\bs\pi_k)$ is a prime snake module, it is prime. Let us proceed by induction on $l$ to prove that
	\begin{equation}\label{e:sspfexist}
		V(\bs\pi)\cong V(\bs\pi_1)\otimes \cdots \otimes V(\bs\pi_l),
	\end{equation} 
	which clearly starts if $l=1$. Thus, assume $l>1$ and let $\bs\pi' = \bs\pi\bs\pi_l^{-1}$. Evidently,
	\begin{equation*}
		\bbs\pi' = \bbs\pi=\bs\pi_1
	\end{equation*}
	and, hence, $G'=G_f(\bs\pi')$ has snake support. Moreover, $\mathcal G'=G_1,\dots,G_{l-1}$ is an mtos-quochain for $G'$ and it follows from the induction assumption that 
	\begin{equation}\label{e:sspfi}
		V(\bs\pi')\cong V(\bs\pi_1)\otimes \cdots \otimes V(\bs\pi_{l-1}).
	\end{equation} 
	Since snake modules are real, in light of \Cref{c:cyc}, it remains to show
	\begin{equation}\label{e:sspf}
		V(\bs\pi_k)\otimes V(\bs\pi_l) \quad\text{is simple for all}\quad 1\le k<l. 
	\end{equation}
	
	\begin{lem}\label{l:sspf}
		For each $1\le k<l$, one of the following holds:
		\begin{enumerate}[(i)]
			\item $\bs\pi_l$ divides $\bs\pi_k$;
			\item $G_k$ and $G_l$ are connected components of $G_k\otimes G_l$. 
		\end{enumerate}
	\end{lem}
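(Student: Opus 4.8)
The plan is to analyze the structure of $G=G_f(\bs\pi)$ near the mtos $G_l$, using the characterization of mtos from \Cref{l:maxtosgss} together with the fact that $\bs\pi$ (hence every connected subgraph) has snake support. Fix $1\le k<l$. Recall that $G_l$ was produced from a connected component $C$ of $\bar G_{l-1}=G_k\otimes(\text{rest})\otimes\cdots$ — more precisely, in the recursive construction $G_l$ is an mtos of some connected component $C$ of the graph obtained after deleting $G_1,\dots,G_{l-1}$, and $G_k$ lives in a (possibly different) piece of that deletion process. Since $\bar G_{k-1}$ has snake support and is connected when non-singleton, the vertices of $G_k$ and $G_l$ either lie in the same connected component of $G$ at the stage where both survive, or they do not. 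First I would treat the easy case: if no vertex of $G_k$ is comparable (in the arrow-induced partial order of $G$) to any vertex of $G_l$, and in fact no arrow of $G$ joins $\mathcal V_{G_k}$ to $\mathcal V_{G_l}$, then $G_k$ and $G_l$ are unions of connected components of $G_k\otimes G_l$; since each is connected (being totally ordered, hence a single component), alternative (ii) holds. So the real content is the case where some arrow of $G$ does join $\mathcal V_{G_k}$ to $\mathcal V_{G_l}$.

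In that remaining case I would argue that alternative (i) must hold, i.e.\ $\bs\pi_l$ divides $\bs\pi_k$. The key point is that $G_l$ is an mtos, so by \Cref{l:maxtosgss} its vertex multiset is exactly $\{\bar v_1,\dots,\bar v_N\}$ — one fundamental factor of each occurring fundamental $\ell$-weight of $\bs\pi$ — and the same description (for the appropriate smaller Drinfeld polynomial) applies at each stage of the recursion. Consider the stage $\bar G_{k-1}$ at which $G_k$ is extracted: $G_k$ is an mtos of $\bar G_{k-1}$, so $\bbs\pi_{G_k}=\overline{\bs\pi_{\bar G_{k-1}}}$ and since $G_l\triangleleft \bar G_{k-1}\setminus G_k = \bar G_k$, every fundamental factor occurring in $\bs\pi_l$ occurs in $\bar G_k$, hence occurs in $\bar G_{k-1}$, hence (since $G_k$ is an mtos of $\bar G_{k-1}$, containing one copy of each such factor) occurs in $\bs\pi_k$. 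Thus at the level of \emph{which} fundamental $\ell$-weights appear, $\sup_{\ell}$-wise every factor of $\bs\pi_l$ is a factor of $\bs\pi_k$; to upgrade this to actual divisibility $\bs\pi_l\mid\bs\pi_k$ of Drinfeld polynomials I would check that multiplicities match, which again follows because $G_l$ is an mtos (so each factor appears with multiplicity one in $\bs\pi_l$) while $G_k$ being an mtos forces each such factor to appear in $\bs\pi_k$ with multiplicity at least one. This yields (i).

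The main obstacle I anticipate is the dichotomy itself: ruling out the intermediate situation where $G_k$ and $G_l$ are joined by an arrow of $G$ but $\bs\pi_l$ does \emph{not} divide $\bs\pi_k$. Here I would use \Cref{l:maxtosgss} more forcefully: if an arrow of $G$ joins a vertex $v\in\mathcal V_{G_k}$ to a vertex $w\in\mathcal V_{G_l}$, then $v$ and $w$ are comparable in $G$, so one can try to extend the totally ordered chain $G_k$ by vertices of $G_l$ (or vice versa) along a directed path, contradicting maximality of $G_k$ unless the relevant fundamental factor already occurs in $\bs\pi_k$; iterating this over all vertices of $G_l$ adjacent (through $G$) to $G_k$ forces every factor of $\bs\pi_l$ into $\bs\pi_k$. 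Making this ``path-extension'' argument precise — in particular, verifying that the extended vertex set is again totally ordered, which is exactly the kind of argument carried out in the proof of \Cref{l:maxtosgss} — is the delicate step, but it is a direct adaptation of the reasoning already used there, now applied at the stage $\bar G_{k-1}$ rather than at $G$ itself. Once Lemma \ref{l:sspf} is established, \eqref{e:sspf} follows immediately: in case (i), $V(\bs\pi_k)\otimes V(\bs\pi_l)$ is simple by \Cref{p:testworksforsnakes} since $V(\bs\pi_k)$ is a prime snake module and $\bs\pi_l\mid\bs\pi_k$; in case (ii), there is no arrow between $G_k$ and $G_l$, so $V(\bs\pi_k)\otimes V(\bs\pi_l)$ and $V(\bs\pi_l)\otimes V(\bs\pi_k)$ are both highest-$\ell$-weight by \eqref{e:rmhwtp} (applied in both directions, using that non-adjacency means neither $\gtrdot$ relation is obstructed) and hence simple by \Cref{p:vnvstar}.
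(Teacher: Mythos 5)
Your overall plan---splitting according to whether an arrow joins $\mathcal V_{G_k}$ to $\mathcal V_{G_l}$, and using \Cref{l:maxtosgss} to force divisibility in the adjacent case---is the right idea, but the step carrying the weight is wrong as written. You assert that, since $G_k$ is an mtos of $\bar G_{k-1}$, one has $\bs\pi_{G_k}=\overline{\bs\pi_{\bar G_{k-1}}}$, so that every fundamental factor occurring anywhere in $\bar G_{k-1}$ (in particular every factor of $\bs\pi_l$) occurs in $\bs\pi_k$. That is not what \Cref{l:maxtosgss} gives: it is proved for $G_f(\bs\pi)$ with $\bs\pi$ of snake support, and its proof uses connectedness. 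The graph $\bar G_{k-1}$ is in general disconnected; a totally ordered subgraph is connected, so an mtos of $\bar G_{k-1}$ is an mtos of a single connected component $C$, and all \Cref{l:maxtosgss} yields (applied to $C$, which has snake support by iterating \Cref{l:cchss}) is $\bs\pi_k=\bbs\pi_{C}$. Fundamental factors of $\bs\pi_l$ living in other components need not occur in $\bs\pi_k$ at all. Note also that your paragraph proving (i) never uses the adjacency hypothesis, so if it were correct it would show (i) holds for every $k$, making alternative (ii) superfluous---which is exactly the situation the lemma is designed to exclude: (ii) is the case where $G_l$ lands in a different component.

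The repair is what the paper does, and it replaces your ``path-extension against maximality of $G_k$'' sketch, which does not work as stated: the vertices of $G_l$ you would adjoin may carry the same $\mathcal F$-value as vertices already in $G_k$, so no contradiction with maximality arises. Enumerate the connected components $C_1,\dots,C_m$ of $\bar G_{k-1}=G_k\otimes\cdots\otimes G_l$ and say $G_k\triangleleft C_1$; each later $G_s$ lies inside some $C_r$ because it is connected. If some arrow joins $\mathcal V_{G_k}$ to $\mathcal V_{G_l}$, then $G_l$, being connected, lies in $C_1$ as well; since $\bs\pi_l$ is a product of distinct fundamental factors all occurring in $\bs\pi_{C_1}$, it divides $\bbs\pi_{C_1}=\bs\pi_k$, giving (i). Otherwise $G_l$ lies in a component different from $C_1$, so there is no arrow between $\mathcal V_{G_k}$ and $\mathcal V_{G_l}$, and since both subgraphs are connected they are precisely the connected components of $G_k\otimes G_l$, giving (ii); the case $k=1$ is immediate since $\bs\pi_1=\bbs\pi$. (Your closing remark that in case (ii) simplicity follows from \eqref{e:rmhwtp} ``in both directions'' is also off---non-adjacency does not produce a $\gtrdot$ relation in either direction---but that concerns the use of the lemma in \eqref{e:sspf}, not the lemma itself.)
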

	
	\begin{proof}
		If $l=2$, then (i) holds since $\bs\pi_s$ divides $\bs\pi_1$ for all $1\le s\le l$. 
		Assume (i) fails and let us prove (ii) holds. In particular, $k\ne 1$. By definition of mtos-quochain, $G_k$ is an mtos of $G_k\otimes\cdots\otimes G_l$. 
		
		Let $C_1,\dots, C_m$ be an enumeration of the connected components of $G_k\otimes\cdots\otimes G_l$. The construction of the mtos-quochains described above implies that, for each $1<s\le l$, there exists $1\le r\le m$ such that $G_s\triangleleft C_r$. Assume $G_k\triangleleft C_1$. It follows that, if $G_s\triangleleft C_1$ for some $k<s\le l$, then $\bs\pi_s$ divides $\bs\pi_k$. Hence, since (i) fails, $G_l$ cannot be a subgraph of $C_1$, which implies (ii) holds.
	\end{proof}
	
	If $k$ is such that \Cref{l:sspf}(ii) holds, then \eqref{e:sspf} is immediate. Otherwise, it follows from \Cref{p:testworksforsnakes}. 
	
	It remains to prove uniqueness part of the prime decomposition in \Cref{t:orddecss}. Thus, assume
	\begin{equation*}
		V(\bs\pi)\cong V(\bs\varpi_1)\otimes\cdots\otimes V(\bs\varpi_m) \quad\text{and $V(\bs\varpi_k)$ is prime for all } 1\le k\le m.  
	\end{equation*}
	In particular, $G'_k:=G_f(\bs\varpi_k)$ is connected and, hence, so is $G_f(\bbs\varpi_k)$. Since $\bbs\varpi_k$ divides $\bbs\pi$ for all $1\le k\le m$, it then follows that $G'_k$ has snake support. Using that $V(\bs\varpi_k)$ is prime once again together with \eqref{e:sspfexist} (with $\bs\varpi_k$ in place of $\bs\pi$), we conclude $V(\bs\varpi_k)$ is a prime snake module. Using the first part of \Cref{t:orddecss}, i.e., the uniqueness of mtos-decomposition for $G$, the uniqueness of the prime decomposition for $V(\bs\pi)$ follows if we show that, up to reordering if necessary, $G'_1,\dots,G'_m$ is an mtos-quochain. 
	This follows from  the following lemma.
	
	\begin{lem}
		Let $\mathcal G=G_1,\dots, G_l$ be a sequence of fundamental factorization graphs such $\bs\pi_{G_k}$ is a prime snake module for all $1\le k\le l$ and let $G=G_1\otimes\cdots\otimes G_k$. If no reordering of $\mathcal G$ is an mtos-quochain of $G$, there exists $1\le k<m\le l$ such that $V(\bs\pi_{G_k})\otimes V(\bs\pi_{G_m})$ is reducible.	
	\end{lem}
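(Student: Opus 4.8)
The plan is to prove the contrapositive: assuming $V(\bs\pi_{G_k})\otimes V(\bs\pi_{G_m})$ is simple for all $1\le k<m\le l$, I will show that the ordering $G_1,\dots,G_l$ is itself an mtos-quochain of $G$. Since each $V(\bs\pi_{G_k})$ is a prime snake module, $G_k=G_f(\bs\pi_{G_k})$ is connected and totally ordered by \Cref{l:equivdefpsnake}, so it suffices to check that $G$ contains no arrow joining a vertex of some $G_k$ to a vertex of some $G_m$ with $k\ne m$. Indeed, in that situation each $G_k$ is a connected component of $\bar G_{k-1}=G_k\otimes\cdots\otimes G_l$, and any totally ordered subgraph of $\bar G_{k-1}$ properly containing $G_k$ would contain a vertex lying in a different component — hence incomparable with every vertex of $G_k$ — which is absurd; thus $G_k$ is a maximal totally ordered subgraph of $\bar G_{k-1}$ for every $k$, i.e.\ $G_1,\dots,G_l$ is an mtos-quochain. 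So the whole matter reduces to the following: \emph{if $G$ has an arrow joining a vertex of $G_k$ to a vertex of $G_m$, then $V(\bs\pi_{G_k})\otimes V(\bs\pi_{G_m})$ is reducible.}

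To prove that, suppose $(v,w)\in\mathcal A_G$ with $v\in\mathcal V_{G_m}$, $w\in\mathcal V_{G_k}$, $k\ne m$, and write $\mathcal F(w)=\bs\omega_{i,a}$, $\mathcal F(v)=\bs\omega_{j,b}$. By the defining property of pseudo $q$-factorization graphs together with \eqref{e:krhwtp}, $b>a$ and $b-a\in\mathscr R_{i,j}$, i.e.\ $((i,a),(j,b))$ is in prime snake position. Writing the prime snakes underlying $G_k$ and $G_m$ as $\bs\pi_{G_k}=\bs\omega_{\bs i,\bs a}$ and $\bs\pi_{G_m}=\bs\omega_{\bs i',\bs a'}$ with $a_1<\dots<a_p$ and $a'_1<\dots<a'_q$, I would pick $s,t$ with $(i_s,a_s)=(i,a)$ and $(i'_t,a'_t)=(j,b)$, and then glue the two truncated snakes at the junction: since $a_s<a'_t$, the junction pair is in prime snake position, and all the other consecutive pairs already occur inside $\bs\pi_{G_k}$ or $\bs\pi_{G_m}$, the concatenation $\bs\sigma=((i_1,a_1),\dots,(i_s,a_s),(i'_t,a'_t),(i'_{t+1},a'_{t+1}),\dots,(i'_q,a'_q))$ is a prime snake of length $L=s+q-t+1\ge2$.

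Next I would feed $\bs\sigma$ into \eqref{e:my:tsysred} and \eqref{e:my:tsysnlw}: $V(\bs\omega_{\bs\sigma,1,L-1})\otimes V(\bs\omega_{\bs\sigma,2,L})$ is reducible, and there are $\bs\alpha\in\wt_\ell V(\bs\omega_{\bs\sigma,1,L-1})$, $\bs\beta\in\wt_\ell V(\bs\omega_{\bs\sigma,2,L})$ with $\bs\alpha\bs\beta\notin\wt_\ell V(\bs\omega_{\bs\sigma})$. Using the lifting statement \eqref{e:my:tsyslw} for the prime snakes $\bs\pi_{G_k}$ and $\bs\pi_{G_m}$, along the intervals that realize the junction nodes (positions $s$ and $t$), one transports this data to $\bs\mu\in\wt_\ell V(\bs\pi_{G_k})$ and $\bs\mu'\in\wt_\ell V(\bs\pi_{G_m})$; then $\bs\mu\bs\mu'\in\wt_\ell\big(V(\bs\pi_{G_k})\otimes V(\bs\pi_{G_m})\big)$ by \eqref{e:lwprod}, and the argument closes once one checks $\bs\mu\bs\mu'\notin\wt_\ell V(\bs\pi_{G_k}\bs\pi_{G_m})$, since that rules out $V(\bs\pi_{G_k})\otimes V(\bs\pi_{G_m})\cong V(\bs\pi_{G_k}\bs\pi_{G_m})$ and hence gives reducibility.

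The main obstacle will be precisely this non-membership $\bs\mu\bs\mu'\notin\wt_\ell V(\bs\pi_{G_k}\bs\pi_{G_m})$: the polynomial $\bs\pi_{G_k}\bs\pi_{G_m}$ is in general not the Drinfeld polynomial of a snake, so the $\ell$-weight descriptions of snake modules cannot be applied to it directly. My intended route is to combine the occurrence estimates \eqref{e:lwbounds} with \Cref{l:extocc} and the $\gtrdot$-formalism in order to peel, off of any putative $\ell$-weight of $V(\bs\pi_{G_k}\bs\pi_{G_m})$ equal to $\bs\mu\bs\mu'$, the ``tail'' factors $({}^*\bs\omega_{\bs i,\bs a,s+1,p})^{-1}$ and $\bs\omega_{\bs i',\bs a',1,t-1}$ that the lifting \eqref{e:my:tsyslw} attaches to $\bs\mu$ and to $\bs\mu'$, thereby reducing the claim to the non-membership already furnished by \eqref{e:my:tsysnlw} (for $\bs\sigma$, or even for the length-two prime snake $((i,a),(j,b))$). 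A possible alternative is to show instead that one of $V(\bs\pi_{G_k})\otimes V(\bs\pi_{G_m})$ and $V(\bs\pi_{G_m})\otimes V(\bs\pi_{G_k})$ fails to be highest-$\ell$-weight and to conclude by \Cref{p:vnvstar}, using the asymmetry $b>a$ at the junction. Everything outside this $\ell$-weight bookkeeping — the reduction to the absence of cross-arrows and the construction of $\bs\sigma$ — is routine.
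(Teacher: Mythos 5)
Your proposal breaks at the step on which everything else hangs: the claim that an arrow of $G$ joining a vertex of $G_k$ to a vertex of $G_m$ forces $V(\bs\pi_{G_k})\otimes V(\bs\pi_{G_m})$ to be reducible is false. Take $i=1$ (any rank), $\bs\pi_{G_1}=\bs\omega_{1,0}\,\bs\omega_{1,2}\,\bs\omega_{1,4}=\bs\omega_{1,2,3}$ and $\bs\pi_{G_2}=\bs\omega_{1,2}$. Both are prime snake modules, and in $G=G_1\otimes G_2$ the vertex $\bs\omega_{1,2}$ of $G_2$ is joined by arrows to the vertices $\bs\omega_{1,0}$ and $\bs\omega_{1,4}$ of $G_1$ (since $2\in\mathscr R_{1,1}$); yet $V(\bs\omega_{1,2,3})\otimes V(\bs\omega_{1,2,1})$ is simple, by \eqref{e:defredset} ($0\notin\mathscr R^{3,1}_{1,1}=\{4\}$) or directly by \Cref{p:testworksforsnakes}. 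This is exactly the ``nested'' situation that the paper isolates as case (i) of \Cref{l:sspf}, where cross-arrows coexist with simplicity. Consequently your chain (pairwise simplicity) $\Rightarrow$ (no cross-arrows) $\Rightarrow$ (each $G_k$ is a connected component of $\bar G_{k-1}$) collapses at the first implication, and the $\ell$-weight bookkeeping you outline afterwards cannot be completed from the adjacency hypothesis alone, because the statement it targets is simply not true when one snake's support is nested in the other's. For the same reason your fallback via \Cref{p:vnvstar} cannot work in that case: both orderings of the tensor product are highest-$\ell$-weight there.

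The correct combinatorial trigger for reducibility is not a single junction arrow but a genuine interleaving of the two snakes' supports in the linear order of the fundamental factors, and this is precisely what the hypothesis ``no reordering of $\mathcal G$ is an mtos-quochain'' supplies. The paper's proof chooses $G_1$ maximal among the $G_s$ (and inducts on $l$); if $G_1$ is not itself an mtos of $G$, some $G_m$ contains a vertex extending $G_1$ to a strictly larger totally ordered subgraph, which forces the indicator difference $w_1-w_2$ on the ordered list of distinct fundamental factors to change sign. Only after extracting a minimal interleaved window $[s_1,s_2]$ does one apply \eqref{e:my:tsysred} and \eqref{e:my:tsysnlw} to the prime snake supported on that window, lift by \eqref{e:my:tsyslw}, and exclude the resulting $\ell$-weight from $\wt_\ell V(\bs\pi_{G_1}\bs\pi_{G_2})$ using \eqref{e:rmhwtp} and \Cref{l:extocc}. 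Your second and third paragraphs echo this machinery, but anchored to the wrong hypothesis; to repair the argument you would have to replace ``there is a cross-arrow'' by ``the supports interleave'', and that replacement is exactly the content of the maximality argument you skipped.
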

	
	\begin{proof}
		Up to reordering, assume $G_1$ is maximal in the set $\{G_s: 1\le s\le l\}$ which can be regarded as a subset of the set of subgraphs of $G$. Let us proceed by induction on $l\ge 2$. Suppose we have shown for $l=2$ and assume $l>2$. 
		
		Suppose first that $G_k$ is an mtos of $G$ for some $k$. By our assumption on $G_1$, we may assume $k=1$. Consider $\mathcal G'=G_2,\dots, G_l$. Then, $\mathcal G'$ satisfies all the assumptions and the inductive assumption completes the proof. Otherwise, there exists $m>1$, such that $G_m$ contains a vertex $v$ such that the subgraph with vertex-set $\mathcal V_{G_1}\cup\{v\}$ is totally ordered and properly contains $G_1$. For simplicity of notation, reorder so that $m=2$. It now suffices to show
		\begin{equation}\label{e:rtps}
			V(\bs\pi_{G_1})\otimes V(\bs\pi_{G_2}) \quad\text{is reducible.}
		\end{equation}
		Note this also proves that induction starts when $l=2$. We shall actually prove 
		\begin{equation}\label{e:rtps'}
			\wt_\ell V(\bs\pi_{G_1})\otimes V(\bs\pi_{G_2})\neq \wt_\ell V(\bs\pi_{G_1}\bs\pi_{G_2}),
		\end{equation}
		which clearly implies \eqref{e:rtps}.

		For proving \eqref{e:rtps'}, let $\bs\pi=\bs\pi_{G_1}\bs\pi_{G_2}$ and let $(\bs i,\bs a)\in I^r\times \mathbb Z^r$ be such that
		\begin{equation*}
			\bar{\bs\pi}=\bs\omega_{\bs i,\bs a} = \prod_{s=1}^r \bs\omega_{i_s,a_s} \quad\text{and}\quad a_1<\cdots <a_r. 
		\end{equation*}
		For $j\in\{1,2\}$ define $w_j\in \{0,1\}^{r}$ by requiring that $w_j(s)=1$ iff $\bs\omega_{i_s,a_s}\in \mathcal V_{G_j}$.  Our assumption on $G_1$ and $G_2$ implies that there exists $1\leq s_1< s_2\leq r$ such that 
		\begin{equation*}
			(w_1-w_2)(s_1)= \pm 1 \quad\text{and}\quad (w_1-w_2)(s_2)=\mp 1.
		\end{equation*}
		By choosing $s_1$ and $s_2$ such that $s_2-s_1$ is minimal, we have \begin{equation*}
			(w_1-w_2)(s)=0 \quad\text{for all}\quad s_1<s<s_2.
		\end{equation*}
		Given $j\in\{1,2\}$, set 
		\begin{equation*}
			\bs\pi_{j}^< = \prod_{\substack{s<s_1,\\ w_j(s)=1}} \bs\omega_{i_s,a_s}, \quad 
			\bs\pi_{j}^> = \prod_{\substack{s>s_2,\\ w_j(s)=1}} \bs\omega_{i_s,a_s}, \quad
			\bs\pi_{j} = \prod_{\substack{s_1\le s\le s_2,\\ w_j(s)=1}} \bs\omega_{i_s,a_s}.
		\end{equation*}
		In particular, $\bs\pi_{G_j} = \bs\pi_j^<\,\bs\pi_j\,\bs\pi_j^>$. The following argument extends the one used in \cite[Section 2.5]{bc:hokr} to the present context. 
		
		Let $l' = s_2-s_1+1>1$ and $(\bs i',\bs a') \in I^{l'}\times \mathbb Z^{l'}$ be such that 
		\begin{equation*}
			\bs\omega_{\bs i',\bs a'} = \prod_{s=s_1}^{s_2} \bs\omega_{i_s,a_s}. 
		\end{equation*}
		In particular, in the notation of \eqref{e:my:tsysred}, there exists $j,j'$ such that $\{j,j'\}=\{1,2\}$,
		\begin{equation*}
			\bs\pi_j = \bs\omega_{i',a',1,l'-1}, \quad \text{and} \quad \bs\pi_{j'} = \bs\omega_{i',a',2,l'},
		\end{equation*}
		and, hence, $V(\bs\pi_1)\otimes V(\bs\pi_2)$ is reducible. Moreover, it follows from \eqref{e:my:tsysnlw} that there exists $\bs\omega_j\in \wt_\ell V(\bs\pi_j)$ such that 
		\begin{equation}\label{e:mynotlw}
			\bs\omega_1\bs\omega_2\notin\wt_\ell V(\bs\pi_1\bs\pi_2).
		\end{equation} 
		Note \eqref{e:my:tsyslw}, applied to $\bs\pi_{G_j}$ in place of $\bs\omega_{\bs i,\bs a}$, implies
		\begin{equation*}
			\bs\pi_j^<\,\bs\omega_j({}^*\bs\pi_j^>)^{-1} \in V(\bs\pi_{G_j})
		\end{equation*}
		and, therefore,
		\begin{equation*}
			\bs\omega:=\bs\pi_1^{<}\,\bs\pi_2^<\,\bs\omega_1\,\bs\omega_2\,({}^*\bs\pi_1^>\,{}^*\bs\pi_2^>)^{-1}\in \wt_\ell V(\bs\pi_{G_1})\otimes V(\bs\pi_{G_2}).
		\end{equation*}
		Thus, in order to complete the proof, it suffices to check 
		\begin{equation*}
			\bs\omega\notin \wt_\ell V(\bs\pi_{G_1}\bs\pi_{G_2}).
		\end{equation*} 
		Indeed, by definition, we have $\bs\pi_1^{<}\bs\pi_2^< \lessdot \bs\pi_1\bs\pi_2 \lessdot \bs\pi_1^>\bs\pi_2^>$  and, therefore, an application of \eqref{e:rmhwtp} implies we have an epimorphism
		\begin{equation*}
			V(\bs\pi_1^>\bs\pi_2^>)\otimes V(\bs\pi_1\bs\pi_2)\otimes V(\bs\pi_1^{<}\bs\pi_2^<) \to V(\bs\pi_{G_1}\bs\pi_{G_2}). 
		\end{equation*}
		Hence, it suffices to show
		\begin{equation}\label{e:lwnitp}
			\bs\omega\notin \wt_\ell V(\bs\pi_1^>\bs\pi_2^>)\otimes V(\bs\pi_1\bs\pi_2)\otimes V(\bs\pi_1^{<}\bs\pi_2^<).
		\end{equation} 
		Note \eqref{e:lwbounds} implies $(^*\bs\pi_1^>\,{}^*\bs\pi_2^>)^{-1}$ and $\bs\pi_1^{<}\bs\pi_2^<$ occur in $\bs\omega$. Then, if \eqref{e:lwnitp} were false, an application of \Cref{l:extocc} would imply $\bs\omega_1\bs\omega_2\in\wt_\ell V(\bs\pi_1\bs\pi_2)$, yielding a contradiction with  \eqref{e:mynotlw}. 
	\end{proof}

	\subsection{The Case of Monochromatic Graphs}\label{ss:monoc}
	We now prove \Cref{p:equivinseg}, starting with:
	
	\begin{proof}[Proof of \Cref{l:moncto}]
		Assume $G=G_f(\bbs\pi)$ is connected and let $S=\{\bs\omega_k: 1\le k\le l\}$ be the set of fundamental factors of $\bs\pi$. Then, $\bs\omega_k = \bs\omega_{i,a_k}$ for some $a_k\in\mathbb Z$. We assume the enumeration is chosen so that $a_1<a_2<\cdots<a_l$. In particular
		\begin{equation*}
			r<s \quad\Rightarrow\quad (\bs\omega_{i,a_r},\bs\omega_{i,a_s})\notin \mathcal A_G.
		\end{equation*}
		Then, the connectedness of $G$ implies that, for each $1\le r< l$, there exists $r<s\le l$ such that $(\bs\omega_{i,a_s},\bs\omega_{i,a_r})\in \mathcal A_G$. It follows from \Cref{l:positiveppath}(a) that
		\begin{equation*}
			(\bs\omega_{i,a_k},\bs\omega_{i,a_m})\in \mathcal A_G \quad\text{for all}\quad r\le m\le k\le s,
		\end{equation*}
		which completes the proof.
	\end{proof}
	
	The comments before the statement of \Cref{p:equivinseg} shows we have already proved (i) $\Leftrightarrow$ (ii) $\Leftrightarrow$ (iii), while \Cref{c:topreserved} shows (iii) $\Rightarrow$ (iv) and the main result of \cite{ms:to} (Theorem 3.5.5) shows (iv) $\Rightarrow$ (v). On the other hand, (v) implies $G_f(\bs\pi)$ is connected and then, \Cref{l:moncto} implies it has snake support which, in light of \Cref{t:orddecss}, shows that (v) $\Rightarrow$ (ii). Thus, it suffices to show that, for monochromatic graphs, (iv) $\Rightarrow$ (iii) (this is easily seen to be false in general). 	
	
	Let $\bs\pi\in\mathcal P^+$ and assume $G(\bs\pi)$ is connected and $\sup(\bs\pi)=\{i\}$. In particular, there exists $a\in\mathbb Z, l\in\mathbb Z_{>0}$, $k_j\in\mathbb Z, m_j\in\mathbb Z_{>0}, 1\le j\le l$, such that
	\begin{equation*}
		\bs\pi = \prod_{j=1}^l (\bs\omega_{i,a+k_j})^{m_j} \quad\text{and}\quad k_j< k_{j'} \ \text{ if }\ j< j'.
	\end{equation*}
	Moreover, any other such expression differs by replacing $a$ by $a-k$ for some $k$, which replaces $k_j$ by $k_j+k$  and does not change $l$ or $m_j$. Thus, fix $a$ and let $\bs k(\bs\pi)$ be the sequence $(k_1,\dots,k_l)$. The proof of (iv) $\Rightarrow$ (iii) is clearly completed with the following lemma.
	
	\begin{lem}\label{l:to=>seg}
		If $G(\bs\pi)$ is totally ordered, then $m_j=1$ for all $1\le j\le l$ and $\bs k(\bs\pi)\in S_{i,n}$. \endd
	\end{lem}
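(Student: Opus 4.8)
The plan is to argue directly on the $q$-factors $v_1,\dots,v_N$ of $\bs\pi$, writing $v_m=\bs\omega_{i,b_m,r_m}$, by playing the $q$-factorization condition \eqref{e:defqfact} against the adjacency rule of $G(\bs\pi)$, which for a module supported at $i$ is governed by $\mathscr R_{i,i}^{r,s}$ as in \eqref{e:defredset'}.

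First I would extract the combinatorics forced by total orderedness. Two $q$-factors with the same image can never be joined by a directed path (all arrows strictly decrease the exponent, since $\mathscr R_{i,i}^{r,s}\subseteq\mathbb Z_{>0}$), so under our hypothesis the $v_m$ are pairwise distinct; relabel them so that $v_1>v_2>\dots>v_N$ in the induced linear order. As no vertex lies strictly between $v_m$ and $v_{m+1}$, the relation $v_m>v_{m+1}$ must be realized by a single arrow $(v_m,v_{m+1})\in\mathcal A$, and $b_1>\dots>b_N$. By \eqref{e:defredset'} this arrow gives $b_m-b_{m+1}=r_m+r_{m+1}-2p_m$ for some $-d(i,\partial I)\le p_m<\min\{r_m,r_{m+1}\}$, while \eqref{e:defqfact} together with \eqref{e:sl2inRij} forces $b_m-b_{m+1}\notin\mathscr R_i^{r_m,r_{m+1}}$, i.e.\ $p_m\notin\{0,1,\dots,\min\{r_m,r_{m+1}\}-1\}$. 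Comparing the two constraints yields $p_m\in\{-1,\dots,-d(i,\partial I)\}$ for all $m$.

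The remainder is bookkeeping with constituents. The fundamental factors of $v_m$ are the $\bs\omega_{i,a}$ with $a$ running through $b_m-r_m+1,\,b_m-r_m+3,\,\dots,\,b_m+r_m-1$; a one-line computation shows that the smallest such exponent exceeds the largest constituent exponent of $v_{m+1}$ by $2(1-p_m)=2(|p_m|+1)\ge 4$, and similarly the largest constituent exponent of $v_{m+1}$ strictly exceeds that of $v_{m+2}$. Hence the exponent supports of $v_1,\dots,v_N$ are pairwise disjoint blocks stacked in strictly decreasing order, so no fundamental $\ell$-weight occurs in two distinct $v_m$, nor twice inside a single $v_m$; therefore $m_j=1$ for all $j$. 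Finally, the increasing list of exponents $a+k_1<\dots<a+k_l$ of the fundamental factors of $\bs\pi$ is the concatenation, from bottom to top, of the blocks of $v_N,\dots,v_1$: consecutive exponents inside one block differ by $2$, whereas the jump from the top of the $v_{m+1}$-block to the bottom of the $v_m$-block is $2(|p_m|+1)\in\{4,6,\dots,2d(i,\partial I)+2\}$. Since $\mathscr R_{i,i}=\mathscr R_{i,i}^{1,1}=\{2,4,\dots,2d(i,\partial I)+2\}$, every difference $k_{s+1}-k_s$ lies in $\mathscr R_{i,i}$, which is exactly the assertion $\bs k(\bs\pi)\in S_{i,n}$.

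I expect the one genuinely delicate step to be the passage in the second paragraph: recognizing that a totally ordered $G(\bs\pi)$ is literally a directed path whose consecutive vertices are joined by single arrows, and then the sign computation $p_m<0$. That inequality is the crux, since it simultaneously produces the multiplicity-one statement and the gap estimate that yields the segment condition; everything downstream of it is routine manipulation of Kirillov-Reshetikhin exponents.
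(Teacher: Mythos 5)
Your argument is correct, and the two delicate points you flag are handled properly: covers in the linear order must be realized by single arrows (any longer directed path would produce an intermediate vertex strictly between), and combining the arrow condition $-d(i,\partial I)\le p_m<\min\{r_m,r_{m+1}\}$ with the $q$-factorization condition \eqref{e:defqfact}, which excludes $0\le p_m$, indeed forces $p_m\le -1$, so the constituent exponent blocks of consecutive factors are disjoint and stacked. Where you genuinely differ from the paper is the treatment of $m_j=1$. The paper proves the lemma via two auxiliary results: \Cref{l:noto}, which shows that if one $q$-factor divides another (and divisibility is forced whenever two $q$-factors share a root, since $q$-factors cannot be in special position) then the two are incomparable in $\preccurlyeq$ --- proved by summing the arrow parameters along an arbitrary directed path joining them and contradicting \eqref{e:divcenter} --- and \Cref{l:ext11}, which is precisely the two-vertex case of your gap computation ($p>0$ is excluded there because the two $q$-strings would merge into a longer string, i.e.\ special position). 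You bypass \Cref{l:noto} entirely: the strict negativity of $p_m$ on the cover arrows already separates the strings of consecutive factors, so multiplicity one and the segment condition $k_{s+1}-k_s\in\mathscr R_{i,i}$ drop out of the same computation. Your route is shorter and self-contained for this monochromatic, totally ordered setting, and it makes explicit the reduction to cover arrows and the block-disjointness that the paper leaves implicit when it simply invokes \Cref{l:ext11}; the paper's \Cref{l:noto}, on the other hand, is a statement about any pair of divisible $q$-factors, not just ones adjacent in the order, and is packaged for independent reuse.
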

	
	We need some preparation to prove this lemma. Given $\bs\omega,\bs\varpi\in\mathcal P^+$, we will write $\bs\omega|\bs\varpi$ to mean that $\bs\omega$ divides $\bs\varpi$. If $\bs\omega,\bs\varpi\in\mathcal{KR}$, and $\bs\omega|\bs\varpi$, there exist $i\in I, a,m\in\mathbb Z, r,s\in\mathbb Z_{>0},s\le r$, such that
	\begin{equation*}
		\bs\varpi = \bs\omega_{i,a,r}, \quad\bs\omega=\bs\omega_{i,a+m,s}
	\end{equation*}
	and,
	\begin{equation}\label{e:divcenter}
		m = r+s -2p \quad\text{for some}\quad s\le p\le r. 
	\end{equation}
	Given $\bs\pi\in\mathcal P^+$, recall that we denote by $\preccurlyeq$ the partial order on the vertex set of $G(\bs\pi)$ induced by the arrow structure.
	
	\begin{lem}\label{l:noto}
		Let  $\bs\pi\in\mathcal P^+$. If there exist  $q$-factors of $\bs\pi$, say $\bs\omega$ and $\bs\varpi$, such that $\bs\omega|\bs\varpi$, then $\bs\omega$ and $\bs\varpi$ are not related by  $\preccurlyeq$. 
	\end{lem}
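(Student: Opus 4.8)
The plan is to argue by contradiction, using the structure of $\ell$-weights of KR modules together with Lemma \ref{l:lineps}. Suppose $\bs\omega\mid\bs\varpi$ but $\bs\omega$ and $\bs\varpi$ are comparable; say $\bs\varpi\preccurlyeq\bs\omega$ (the other case being symmetric after replacing $\bs\pi$ by its dual, which reverses arrows). Write $\bs\omega=\bs\omega_{i,a+m,s}$ and $\bs\varpi=\bs\omega_{i,a,r}$ with $s\le r$, so that \eqref{e:divcenter} gives $m=r+s-2p$ for some $s\le p\le r$; in particular $m\le r-s$, i.e. $a+m\le a+r-s$ with equality iff $p=s$. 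Since $\bs\varpi\preccurlyeq\bs\omega$, there is a directed path $\bs\mu_0=\bs\varpi,\bs\mu_1,\dots,\bs\mu_N=\bs\omega$ in $G(\bs\pi)$. Each $\bs\mu_t=\bs\omega_{j_t,c_t,r_t}$ is a genuine $q$-factor of $\bs\pi$, the arrows force $c_{t}-c_{t-1}\in\mathscr R^{r_{t-1},r_t}_{j_{t-1},j_t}$ with $c_t>c_{t-1}$, and we may apply Lemma \ref{l:lineps} to the whole chain.

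First I would extract the inequality on ``centers'': since $\bs\varpi$ and $\bs\omega$ are endpoints of an increasing directed path, Lemma \ref{l:lineps}(b) (in its ``$m_k>m_{k-1}$'' form) produces $p_{N,0}\in\mathbb Z$ with $(a+m)-a=r+s+d(i,i)-2p_{N,0}=r+s-2p_{N,0}$ and $p_{N,0}<\min\{r,s\}=s$. Comparing with \eqref{e:divcenter} gives $p_{N,0}=p\ge s$, contradicting $p_{N,0}<s$. Hence no such path exists. The symmetric case $\bs\omega\preccurlyeq\bs\varpi$ is handled the same way: then the increasing path runs from $\bs\omega$ to $\bs\varpi$, forcing $a-(a+m)=r+s-2p_{N,0}'$ with $p_{N,0}'<s$, i.e. $m=r+s-2p_{N,0}'$ with $p_{N,0}'<s$; but \eqref{e:divcenter} also demands $p\ge s$ with $m=r+s-2p$, so $p=p_{N,0}'$, again contradicting $p_{N,0}'<s$. (Equivalently, one invokes the first case for the dual graph $G(\bs\pi^*)$, whose arrows are those of $G(\bs\pi)$ reversed.)

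I expect the only real subtlety to be checking that Lemma \ref{l:lineps} genuinely applies along the path: one must verify that consecutive vertices of a directed path in a $q$-factorization graph satisfy $|c_t-c_{t-1}|\in\mathscr R^{r_{t-1},r_t}_{j_{t-1},j_t}$ with the ``$+$'' sign (i.e. $c_t>c_{t-1}$), which is exactly the defining property of arrows in a pseudo $q$-factorization graph together with \eqref{e:krhwtp}; and that $\bs\varpi,\bs\omega$ sit at the extreme positions $(0,N)$ of the chain so that the bound $p_{N,0}<\min\{r,s\}$ (rather than the weaker interior bound) is the one we get. Once that is in place, the contradiction is purely arithmetic: divisibility forces the single ``merge parameter'' to be at least $s$, whereas being joined by a directed path forces it to be strictly less than $s$. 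No further input is needed.
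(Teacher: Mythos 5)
Your argument is essentially the paper's: the paper derives the same contradiction by telescoping the relations $m_j=r_j+r_{j+1}-2p_j$ along the directed path by hand, while you invoke \Cref{l:lineps}(b) to get the endpoint bound $p_{N,0}<\min\{r,s\}$ in one step; both hinge on comparing that bound with \eqref{e:divcenter}, and your first case is correct as written (including the check that consecutive vertices of the path satisfy the hypotheses of \Cref{l:lineps} with increasing centers). Two small repairs are needed in your second case. First, the sign: if the increasing path runs from $\bs\omega$ to $\bs\varpi$, \Cref{l:lineps}(b) gives $a-(a+m)=-m=r+s-2p'_{N,0}$ with $p'_{N,0}<s$, not $m=r+s-2p'_{N,0}$, so you cannot identify $p'_{N,0}$ with $p$; instead, comparing with $m=r+s-2p$ yields $p'_{N,0}=r+s-p\ge s$ because \eqref{e:divcenter} gives $p\le r$, and this contradicts $p'_{N,0}<s$ (note this case uses the upper bound $p\le r$, whereas your first case used $p\ge s$). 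Second, the parenthetical reduction by duality is not available under the paper's conventions: $\bs\omega_{i,a}^*=\bs\omega_{i^*,a-\check h}$ preserves differences of spectral parameters, so $G(\bs\pi^*)$ is isomorphic to $G(\bs\pi)$ with arrows preserved, not reversed; since you treat the second case directly anyway, simply delete that aside.
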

	
	\begin{proof}
		Fix the notation leading to \eqref{e:divcenter}. If it were $\bs\omega\preccurlyeq\bs\varpi$, there would exist vertices 
		$v_1 = \bs\omega, v_2,\dots, v_k=\bs\varpi$ such that $(v_{j+1},v_j)$ are arrows for $1\le j<k$. In other words, we would have
		\begin{equation*}
			v_j = \bs\omega_{i,a_j,r_j} \quad\text{with}\quad a_{j+1}-a_j = m_j, \ \ m_j\in\mathscr R_{i,i}^{r_j,r_{j+1}}.
		\end{equation*}
		In particular, 
		\begin{equation*}
			m = -\sum_{j=1}^{k-1} m_j.
		\end{equation*}
		Writing $m_j = r_j+r_{j+1} - 2p_j$ and $d=d(i,\partial I)$, the condition $m_j\in\mathscr R_{i,i}^{r_j,r_{j+1}}$ is equivalent to
		\begin{equation*}
			-d\le p_j<\min\{r_j,r_{j+1}\}.
		\end{equation*} 
		Hence,
		\begin{equation*}
			m = s+r -2p \quad\text{with}\quad p = r+s-\sum_{j=1}^{k-1} p_j + \sum_{j=2}^{k-1} r_j.
		\end{equation*}
		Thus, 
		\begin{equation*}
			p = r+(s- p_1) + \sum_{j=2}^{k-1} (r_j - p_j) > p_1 > r
		\end{equation*}
		yielding a contradiction with \eqref{e:divcenter}. 
		
		Similarly, if it were $\bs\varpi\preccurlyeq\bs\omega$, by repeating the above argument with $v_1=\bs\varpi$ and $v_k=\bs\omega$, we would conclude 
		\begin{equation*}
			m = \sum_{j=1}^{k-1} m_j = s+r -2p \quad\text{with}\quad p = \sum_{j=1}^{k-1} p_j - \sum_{j=2}^{k-1} r_j.
		\end{equation*}
		Thus, 
		\begin{equation*}
			p = p_1 + \sum_{j=2}^{k-1} (p_j - r_j) < p_1 < s,
		\end{equation*}
		yielding a contradiction with \eqref{e:divcenter} again. 
	\end{proof}
	
	We will also need:
	
	\begin{lem}\label{l:ext11}
		Let $i\in I, r,s\in\mathbb Z_{>0}$ and assume $\bs\omega_{i,a,r}$ and $\bs\omega_{i,a+m,s}$ are the $q$-factors of $\bs\pi=\bs\omega_{i,a,r}\,\bs\omega_{i,a+m,s}$. If $m\in\mathscr R_{i,i}^{r,s}$, then $m-s+1-(r-1)\in\mathscr R_{i,i}$ or, equivalently, 
		$$(1-r,3-r,\dots, r-1, m-s+1,m-s+3,\dots,m+s-1)\in S_{i,n,r+s}.$$ 
	\end{lem}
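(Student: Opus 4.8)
The plan is to reduce the statement to a short arithmetic comparison of the sets $\mathscr R_i^{r,s}$ and $\mathscr R_{i,i}^{r,s}$, and then to read off the conclusion by inspection. First I would translate the standing hypothesis that $\bs\omega_{i,a,r}$ and $\bs\omega_{i,a+m,s}$ are the $q$-factors of their product. Applying \eqref{e:defqfact} to this pair of factors — and noting that $\mathscr R_i^{r,s}\subseteq\mathbb Z_{>0}$, so that the inequality coming from the reversed ordering of the pair holds automatically — this assumption is precisely the statement $m\notin\mathscr R_i^{r,s}$. On the other hand, since every element of $\mathscr R_{i,i}^{r,s}$ is a positive integer, the hypothesis $m\in\mathscr R_{i,i}^{r,s}$ forces $m>0$, so $|m|=m$.

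Next I would write both sets out explicitly from \eqref{e:defqfact} and \eqref{e:defredset'}. Writing $d=d(i,\partial I)$ and using $d(i,i)=0$ and $d([i,i],\partial I)=d$, one gets $\mathscr R_i^{r,s}=\{\,|r-s|+2,\,|r-s|+4,\,\dots,\,r+s\,\}$ and $\mathscr R_{i,i}^{r,s}=\mathscr R_i^{r,s}\cup\{\,r+s+2,\,r+s+4,\,\dots,\,r+s+2d\,\}$. Combining this with the previous paragraph gives $m\in\mathscr R_{i,i}^{r,s}\setminus\mathscr R_i^{r,s}$, hence $m=r+s+2j$ for some integer $j$ with $1\le j\le d$.

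Finally I would conclude by direct computation. One has $m-s+1-(r-1)=m-r-s+2=2j+2$, which is even and satisfies $4\le 2j+2\le 2+2d$, hence lies in $\mathscr R_{i,i}=\{2,4,\dots,2+2d\}$; this is the first assertion. For the reformulation, the sequence $\bs k=(1-r,3-r,\dots,r-1,\,m-s+1,m-s+3,\dots,m+s-1)$ has $r+s$ entries, is strictly increasing, and all of its consecutive differences equal $2$ — which belongs to $\mathscr R_{i,i}$ (take $p=0$) — except for the single junction difference $(m-s+1)-(r-1)$. So $\bs k\in S_{i,n,r+s}$ if and only if that junction difference lies in $\mathscr R_{i,i}$, which is exactly what was just shown; this proves the claimed equivalence.

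I do not anticipate a real obstacle: the argument is elementary arithmetic. The only points deserving care are (i) seeing that the $q$-factor hypothesis is equivalent to $m\notin\mathscr R_i^{r,s}$ — in particular that one of the two pairwise conditions in \eqref{e:defqfact} is vacuous because $\mathscr R_i^{r,s}$ consists of positive integers — and (ii) observing that the upper bound $j\le d(i,\partial I)$ afforded by $\mathscr R_{i,i}^{r,s}$ is exactly what is needed to keep $2j+2$ inside $\mathscr R_{i,i}$.
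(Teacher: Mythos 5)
Your proof is correct and takes essentially the same route as the paper's: parametrize $m=r+s-2p$ with $-d(i,\partial I)\le p<\min\{r,s\}$, use the $q$-factor hypothesis to bound $p$ from above, and read off that $m-s+1-(r-1)=2(1-p)\in\mathscr R_{i,i}$, the segment reformulation being exactly the junction-difference condition. The only (harmless) difference is in how the upper bound on $p$ is justified: you exclude $p\ge 0$ directly by observing that, via \eqref{e:defqfact}, the $q$-factor hypothesis is precisely $m\notin\mathscr R_i^{r,s}$ (which even yields the slightly sharper bound $m-r-s+2\ge 4$), whereas the paper excludes only $p>0$, arguing that otherwise the two $q$-strings would merge into a longer string.
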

	
	\begin{proof}
		Write $m=r+s-2p$ with $-d(i,\partial I)\le p<\min\{r,s\}$. Therefore,
		\begin{equation*}
			m-s+1-(r-1) = -2(p-1)\in \{2j: 1-\min\{r,s\}\le j\le d(i,\partial I)+1\}
		\end{equation*}
		and we need to show we must have $j\ge 1$. If it were $j<1$ or, equivalently, $p>0$, the union of the $q$-strings $(1-r,3-r,\dots, r-1)$ and $(m-s+1,m-s+3,\dots,m+s-1)$ would form a longer $q$-string, contradicting the assumption that $\bs\omega_{i,a,r}$ and $\bs\omega_{i,a+m,s}$ are the $q$-factors of $\bs\pi$.  
	\end{proof}
	
	Finally, we can give the:
	
	\begin{proof}[Proof of \Cref{l:to=>seg}]
		If it could be $m_j>1$ for some $j$, it would follow that $\bs\pi$ has two $q$-factors with a common root. The fact that they are $q$-factors implies the assumptions of \Cref{l:noto} are satisfied, yielding a contradiction with the assumption that $G(\bs\pi)$ is totally ordered.  Thus, it remains to show $\bs k(\bs\pi)\in S_{i,n}$, which follows from \Cref{l:ext11}.
	\end{proof}

\end{document}